\numberwithin{equation}{section}
\newtheorem{theorem}{\bf Theorem}
\newtheorem{lemma}{\bf Lemma}
\newtheorem{remark}{\bf Remark}
\let\hat\widehat
\let\tilde\widetilde
\DeclareMathOperator*{\argmin}{\arg \min}
\theoremstyle{definition}
\newtheorem{condition}{Condition}
\newtheorem{exam}{Example}[section]
\newcommand{\continuation}{??}
\newenvironment{continueexample}[1]
{\renewcommand{\continuation}{\ref{#1}}\excont[continued]}
{\endexcont}
\def\tit.arg{{\bf On Asymptotic Optimality of Least Squares Model Averaging When True Model Is Included\footnote{We thank Fang Fang and Yuhong Yang for their valuable comments and suggestions at the “2024 Conference on the Frontiers of Model Averaging, Prediction Theory, and Machine Learning”. Xu's research was partially supported by National Natural Science Foundation of China (12101591). Zhang's research was partially supported by the National Natural Science Foundation of China (71925007, 72091212, and 71988101), Beijing Natural Science Foundation (Z240004), and the CAS Project for Young Scientists in Basic Research (YSBR-008).}}
}
\def\key.arg{
}
\def\author.arg{
Wenchao Xu$^1$ and Xinyu Zhang$^2$ \\
$^1$School of Statistics and Information, Shanghai University of International Business and Economics, Shanghai, China\\
$^2$Academy of Mathematics and Systems Science, Chinese Academy of Sciences, Beijing, China
}
\def\abst.arg{

}
\begin{document}
\baselineskip=18pt

\thispagestyle{plain}
\thispagestyle{empty} 


\begin{center}
{\Large \tit.arg}

\vskip 6mm

\author.arg
\end{center}

\vskip 3mm
\centerline{\small ABSTRACT}
Asymptotic optimality is a key theoretical property in model averaging. Due to technical difficulties, existing studies rely on restricted weight sets or the assumption that there is no true model with fixed dimensions in the candidate set. The focus of this paper is to overcome these difficulties. Surprisingly, we discover that when the penalty factor in the weight selection criterion diverges with a certain order and the true model dimension is fixed, asymptotic loss optimality does not hold, but asymptotic risk optimality does. This result differs from the corresponding result of \citet[\textit{Econometric Theory} 39, 412--441]{fang2023} and reveals that using the discrete weight set of \citet[\textit{Econometrica} 75, 1175--1189]{hansen2007mma} can yield opposite asymptotic properties compared to using the usual weight set. Simulation studies illustrate the theoretical findings in a variety of settings.

\abst.arg

\vskip 3mm\noindent
{\it Keywords:} Asymptotic loss/risk optimality, least square model averaging, Mallows model averaging, weight set. \key.arg 

\setcounter{page}{1}
\newpage

\section{Introduction}\label{sec:intro}
\baselineskip=23pt

Least squares model averaging has attracted much attention in both econometrics and statistics since the seminal work on Mallows model averaging (MMA) by \citet{hansen2007mma}. Subsequently, numerous alternative methods have emerged to select the model averaging weights. These include optimal mean squared error averaging \citep{liang2011}, jackknife model averaging \citep{hansen2012,zhang2013JMA,ando2014}, heteroskedasticity-robust $C_p$ model averaging \citep{liu2013ma}, prediction model averaging \citep{xie2015}, Kullback-Leibler model averaging \citep{zhang2015}, parsimonious model averaging \citep[PMA,][]{zhang2020PMA}, and so on.

Asymptotic optimality is a key theoretical goal pursued in model averaging research. It states that model averaging estimator yields the smallest possible prediction loss (or risk) among all such estimators as the sample size approaches infinity. Most of the aforementioned model averaging methods have been shown to be asymptotically optimal under some conditions. To illustrate, taking MMA as an example. \citet{hansen2007mma} were the first to establish MMA's asymptotic optimality when the candidate models are nested and the weights are restricted to a discrete set. Subsequently, \citet{wan2010} and \citet{zhang2021} provided justification in a non-nested model setting with continuous weights. However, their works impose some conditions to limit the number of candidate models. More recently, \citet{peng.opt} significantly improved these conditions by allowing more candidate models to be combined in a nested model setting. It is important to note that these results typically require that all candidate models with fixed dimensions are approximations, which is satisfied in the scenario where all candidate models are misspecified.

In the area of model averaging, there is another common scenario where the true model is among the candidate models. This scenario is also frequently encountered in model selection. In this context, existing works, such as \citet{zhang2015el}, \citet{zhang.liu2019}, \citet{fang2020}, and \citet{zhang2020PMA}, mainly focus on the asymptotic behavior of the selected weight and the asymptotic distribution of the model averaging estimator. However, theoretical results on asymptotic optimality in this setting have received limited attention in the literature. Recently, \citet{fang2023} made a significant contribution to filling this gap in the nested model setting. To the best of our knowledge, this is the sole work addressing this topic. A related work by \citet{shao1997} established the asymptotic loss efficiency of various model selection procedures, such as Mallows' $C_p$, AIC, BIC, and cross-validation, in the scenario where a true model exists.

Let us consider a weight selection criterion \eqref{eq:crit} with a penalty factor $\phi_n$ defined in Section \ref{sec:setup}. This criterion encompasses MMA ($\phi_n=2$) and PMA ($\phi_n\to \infty$ or $\phi_n=\log n$) as special cases. When considering the scenario where the true model is among the candidate models, the findings of \citet{fang2023} can be summarized as follows. When $\phi_n=\log n$, the least squares model averaging is asymptotically loss optimal by putting weight one to the true model. When $\phi_n=2$ (MMA) and the model dimensions are fixed, asymptotic loss optimality does not hold since the model averaging method puts weight on the over-fitted models with a positive probability. These results are very similar to the asymptotic behaviors of the traditional model selection methods AIC and BIC \citep{shao1997,ding2018}. However, the aforementioned results of \citet{fang2023} restrict the model weights to a slightly special weight set due to technical difficulty. While they believed that these results remain valid for the general weight set, a formal theoretical justification is still lacking.

In this paper, we revisit the asymptotic optimality of least square model averaging in the nested model setting when the true model is among the candidate models. Compared to the results of \citet{fang2023}, our main contributions are twofold. First, our results hold for the general weight set, rather than their restricted weight set. Second, we explore asymptotic risk optimality, a topic that they did not address. We show that when $\phi_n=\log n$ and the true model dimension is fixed, the asymptotic loss optimality does not hold, which differs from the corresponding result of \citet{fang2023}. This also reveals that using the discrete weight set of \citet{hansen2007mma} for model averaging can lead to opposite asymptotic properties compared to using the usual weight set. When $\phi_n=\log n$ and the true model dimension does not diverge too fast, the asymptotic risk optimality holds. Besides, MMA with fixed model dimensions is neither asymptotically loss optimal nor asymptotically risk optimal, which coincides with the corresponding result of \citet{fang2023}.

The remainder of the paper is organized as follows. Section \ref{sec:setup} introduces a class of least squares model averaging methods, which includes MMA and PMA as special cases. Section \ref{sec:existres} reviews the existing results and provides a discussion. Section \ref{sec:mainres} presents the main theoretical results. Section \ref{sec:simu} provides the results of finite sample simulation studies. Section \ref{sec:con} concludes the paper. The proofs of the main results are relegated to the Appendix.

\section{Least Squares Model Averaging}\label{sec:setup}

Suppose we have $n$ independent observations $\{(y_i, \mathbf{x}_i)\colon i=1,\ldots,n\}$, where $y_i$ is the scalar response and $\mathbf{x}_i=(x_{i1},x_{i2},\ldots)$ is a vector of countably infinite covariates. Consider the linear regression model
\begin{equation}\label{eq:model}
	y_i=\mu_i+e_i=\sum_{j=1}^{\infty} \beta_jx_{ij}+e_i,
\end{equation}
where $e_1,\ldots, e_n$ are independent and identically distributed (i.i.d.) errors with $E(e_i|\mathbf{x}_i)=0$ and $E(e_i^2|\mathbf{x}_i)=\sigma^2$ and $\beta_j$'s are unknown parameters. In matrix notation, \eqref{eq:model} can be written as $\mathbf{y}=\bm{\mu}+\mathbf{e}$, where $\mathbf{y}=(y_1,\ldots,y_n)^{\top}$, $\bm{\mu}=(\mu_1,\ldots,\mu_n)^{\top}$, and $\mathbf{e}=(e_1,\ldots,e_n)^{\top}$.

To estimate the true mean vector $\bm{\mu}$, we consider $M_n$ nested candidate models for model averaging, where the $m$th model uses the first $k_m$ covariates and $0<k_1<\cdots<k_{M_n}<\infty$. Both $k_{M_n}$ and $M_n$ can be diverging to infinity as $n\to \infty$. Let $\mathbf{X}_m$ be the $n \times k_m$ design matrix of the $m$th model. We assume that $\mathbf{X}_m$ is of full column rank for any $m \in \{1,\ldots,M_n\}$. Then, under the $m$th model, the estimator of $\bm{\mu}$ is $\hat{\bm{\mu}}_m=\mathbf{P}_m \mathbf{y}$, where $\mathbf{P}_m=\mathbf{X}_m(\mathbf{X}_m^{\top}\mathbf{X}_m)^{-1}\mathbf{X}_m^{\top}$. Let $\mathbf{w}=(w_1,\ldots,w_{M_n})^{\top}$ be a weight vector belonging to the unit simplex in $\mathbb{R}^{M_n}$:
\begin{equation*}
	\mathcal{H}_n=\left\{\mathbf{w}\in [0, 1]^{M_n}\colon \sum_{m=1}^{M_n} w_m=1\right\}.
\end{equation*}
Then, the model averaging estimator of $\bm{\mu}$ with weights $\mathbf{w}$ is $\hat{\bm{\mu}}(\mathbf{w})=\sum_{m=1}^{M_n} w_m \hat{\bm{\mu}}_m=\mathbf{P}(\mathbf{w})\mathbf{y}$, where $\mathbf{P}(\mathbf{w})=\sum_{m=1}^{M_n} w_m\mathbf{P}_m$. Let $\mathbf{I}_n$ be the identity matrix of size $n$ and $\|\cdot\|$ be the Euclidean norm. Following \citet{zhang2020PMA}, the weight vector $\mathbf{w}$ is selected by minimizing the criterion
\begin{equation}\label{eq:crit}
	\mathcal{G}_n(\mathbf{w})=\|\{\mathbf{I}_n-\mathbf{P}(\mathbf{w})\}\mathbf{y}\|^2+\phi_n\hat{\sigma}^2 \mathbf{w}^{\top}\mathbf{K},
\end{equation}
where $\hat{\sigma}^2=(n-k_{M_n})^{-1}\mathbf{y}^{\top}(\mathbf{I}_n-\mathbf{P}_{M_n})\mathbf{y}$ is an estimator of $\sigma^2$, $\mathbf{K}=(k_1,\ldots,k_{M_n})^{\top}$, and $\phi_n$ is a penalty factor which may depend on $n$. Two common choices for $\phi_n$ are $\phi_n=2$ and $\phi_n=\log n$. When $\phi_n=2$, \eqref{eq:crit} is the Mallows criterion of \citet{hansen2007mma}. When all elements of $\mathbf{w}$ are 0 or 1, the weight selection criterion \eqref{eq:crit} corresponds to the AIC if $\phi_n=2$ and the BIC if $\phi_n=\log n$. Note that \citet{fang.EL2022} proposed a cross-validation procedure for selecting $\phi_n$ between $2$ and $\log n$. Finally, the selected weight vector is defined as $\hat{\mathbf{w}}=(\hat{w}_1,\ldots,\hat{w}_{M_n})^{\top}=\argmin_{\mathbf{w}\in \mathcal{H}_n} \mathcal{G}_n(\mathbf{w})$.

Define the squared prediction loss as $L_n(\mathbf{w})=\|\bm{\mu}-\hat{\bm{\mu}}(\mathbf{w})\|^2$ and the corresponding prediction risk as $R_n(\mathbf{w})=E\{L_n(\mathbf{w})|\mathbf{x}_1,\ldots,\mathbf{x}_n\}$. In the context of model averaging research, two types of asymptotic optimality are commonly considered: asymptotic loss and risk optimality. Specifically, we say that $\hat{\mathbf{w}}$ is asymptotically loss optimal if
\begin{equation}\label{eq:loss}
	\frac{L_n(\hat{\mathbf{w}})}{\inf_{\mathbf{w}\in \mathcal{H}_n} L_n(\mathbf{w})}\to_p 1, \tag{OPT.1}
\end{equation}
and we refer to $\hat{\mathbf{w}}$ as asymptotically risk optimal if
\begin{equation}\label{eq:risk}
	\frac{R_n(\hat{\mathbf{w}})}{\inf_{\mathbf{w}\in \mathcal{H}_n} R_n(\mathbf{w})}\to_p 1. \tag{OPT.2}
\end{equation}
Here $\to_p$ denotes convergence in probability. All limiting processes in this paper are with respect to $n\to \infty$. Note that \eqref{eq:loss} means that the squared prediction loss of the model averaging estimator is asymptotically as small as the squared prediction loss of the infeasible best possible averaging estimator, and \eqref{eq:risk} is a similar statement about the squared prediction risk.

When $\phi_n=2$ (MMA), \citet{hansen2007mma} established the asymptotic loss optimality of $\hat{\mathbf{w}}$. However, this requires weights to be confined to the discrete set
\begin{equation*}
	\mathcal{H}_n(N)=\left\{\mathbf{w}\colon w_m\in \left\{0,\frac{1}{N},\frac{2}{N},\ldots,1\right\},\sum_{m=1}^{M_n} w_m=1\right\},
\end{equation*}
where $N$ is a fixed positive integer. Note that model averaging restricted to $\mathcal{H}_n(1)$ reduces to model selection. Subsequently, \citet{wan2010}, \citet{zhang2021}, and \citet{peng.opt} demonstrated that $\hat{\mathbf{w}}$ is asymptotically loss optimal over the continuous weight set $\mathcal{H}_n$. Their proofs actually imply the asymptotic risk optimality of $\hat{\mathbf{w}}$ as well. When $\phi_n\to \infty$, \citet{zhang2020PMA} justified both the asymptotic loss and risk optimality of $\hat{\mathbf{w}}$. A key prerequisite for these results is that $\xi_n=\inf_{\mathbf{w}\in \mathcal{H}_n} R_n(\mathbf{w})\to \infty$. This condition means that there are no candidate models with fixed dimensions for which the approximation error is zero; see also \citet{hansen2007mma}. Therefore, the aforementioned results do not apply to the case where there exists a true candidate model with fixed dimension.

Our aim is to investigate both the asymptotic loss and risk optimality of $\hat{\mathbf{w}}$ in the scenario where the true model is included in the set of candidate models. Let $\mathcal{A}_m=\{1,\ldots,k_m\}$ and $\mathcal{A}_m^c$ denote its complement. We assume that there exists an $M_0$ such that $(\beta_j, j\in \mathcal{A}_{M_0+1}\cap \mathcal{A}_{M_0}^c)^{\top}\neq \mathbf{0}$ and $\beta_j=0$ for all $j>k_{M_0+1}$. Consequently, the first $M_0$ candidate models are under-fitted, the $(M_0+1)$th model is defined as the ``true model", and all other models are over-fitted. Additionally, we assume that $M_0\geq 1$ and $M_n>M_0+1$, meaning there is at least one under-fitted and one over-fitted model. Note that $k_{M_0+1}$ is the true model dimension, and we allow for diverging $k_{M_0+1}$ and $M_0$.

Throughout this paper, we consider the case of deterministic covariates. When $\mathbf{x}_1,\ldots, \mathbf{x}_n$ are random, the results in this article are still valid in the almost sure sense, provided that the required conditions involving $\mathbf{x}_1,\ldots, \mathbf{x}_n$ hold almost surely.

\begin{remark}\label{rem:scenarios1}
	In fact, existing works, such as \citet{wan2010}, \citet{zhang2020PMA}, \citet{zhang2021}, and \citet{peng.opt}, does not completely exclude the scenario in which the true model is among the candidate models. To illustrate this, when the true model dimension diverges, it follows from Lemma \ref{lem:risk} in Section \ref{sec:mainres} that $\xi_n\to \infty$ under certain conditions.  Therefore, the results from the aforementioned works could be applicable to our issue in the context of a diverging true model dimension. See Subsection \ref{subsec:summ} for more detailed discussions.
\end{remark}

\section{Existing Results and Discussion}\label{sec:existres}

\subsection{Review of the Existing Results}\label{subsec:review}

In a recent study, \citet{fang2023} established conditions that determine whether $\hat{\mathbf{w}}$ achieves the asymptotic loss optimality \eqref{eq:loss} when the true model is among the candidate models; see their Subsection 3.2 for the details. However, due to technical difficulty, their results rely on the following slightly special weight set
\begin{equation*}
	\mathcal{H}_n^{\delta}=\left\{\mathbf{w}\in \mathcal{H}_n\colon \sum_{m<M_0+1} w_m=0~\text{or}~\sum_{m<M_0+1} w_m\geq \delta n^{-\tau_0}\right\}.
\end{equation*}
Here, $\delta$ is a positive constant which can be arbitrarily small, and $\tau_0$ is a positive constant that satisfies that $\sum_{j\in \mathcal{A}_{M_0+1}\cap \mathcal{A}_{M_0}^c} \beta_j^2\geq c_{\tau_0} n^{-\tau_0}$ for a positive constant $c_{\tau_0}$. Note that when $\delta$ is sufficiently small or $n$ is large, the set $\mathcal{H}_n^{\delta}$ includes the discrete weight set $\mathcal{H}_n(N)$ considered by \citet{hansen2007mma} and \citet{hansen2012}.

By an abuse of notation, in this subsection, we still use $\hat{\mathbf{w}}$ to denote $\argmin_{\mathbf{w}\in \mathcal{H}_n^{\delta}} \mathcal{G}_n(\mathbf{w})$. The results of \citet{fang2023} are provided in their Theorem 5. Here, we summarize them as follows.

\begin{enumerate}[(i)]
	\item When $\phi_n=\log n$, as long as the true model dimension does not diverge too fast, $\hat{\mathbf{w}}$ is asymptotically loss optimal in the sense that
	\begin{equation*}
		\frac{L_n(\hat{\mathbf{w}})}{\inf_{\mathbf{w}\in \mathcal{H}_n^{\delta}} L_n(\mathbf{w})}\to_p 1
	\end{equation*}
	by pushing all the weights to the true model.
	\item When $\phi_n=2$ and the model dimensions are fixed, asymptotic loss optimality does not hold in the sense that
	\begin{equation*}
		\frac{L_n(\hat{\mathbf{w}})}{\inf_{\mathbf{w}\in \mathcal{H}_n^{\delta}} L_n(\mathbf{w})}\nrightarrow_p 1
	\end{equation*}
	 since the model averaging method puts weights on the over-fitted models with a positive probability.
\end{enumerate}
In the area of model selection, it is well known that in a parametric framework, AIC is not asymptotically loss optimal, whereas BIC is due to its consistency \citep{shao1997,ding2018}. Consequently, Results (i) and (ii) are very similar to the corresponding asymptotic behaviors of BIC and AIC, respectively.

Denote by $\mathbf{w}_m^0$ an $M_n\times 1$ vector in which the $m$th element is one and the others are zeros. Let $\mathbf{w}^*=\argmin_{\mathbf{w}\in \mathcal{H}_n^{\delta}} L_n(\mathbf{w})$. The proof of the above results of \citet{fang2023} is divided into the following two steps.
\begin{enumerate}[\bf Step 1.]
	\item Prove that $\Pr(\mathbf{w}^*=\mathbf{w}_{M_0+1}^0)\to 1$.
	\item Analyze the asymptotic behavior of $L_n(\hat{\mathbf{w}})/L_n(\mathbf{w}_{M_0+1}^0)$.
\end{enumerate}
Steps 1 and 2 correspond to the results in their Lemma 1 and Theorem 5, respectively. The special weight set $\mathcal{H}_n^{\delta}$ is mainly utilized in Step 1. If $\Pr(\mathbf{w}^*=\mathbf{w}_{M_0+1}^0)\to 1$, we have $\Pr\{\inf_{\mathbf{w}\in \mathcal{H}_n^{\delta}} L_n(\mathbf{w})=L_n(\mathbf{w}_{M_0+1}^0)\}\to 1$, and thus $L_n(\hat{\mathbf{w}})/\inf_{\mathbf{w}\in \mathcal{H}_n^{\delta}} L_n(\mathbf{w})$ and $L_n(\hat{\mathbf{w}})/L_n(\mathbf{w}_{M_0+1}^0)$ have the same asymptotic behavior. Finally, after finishing Step 1, the subsequent work is Step 2, which relies on the asymptotic properties of $\hat{w}_m$ for the under-fitted and over-fitted models. These properties have been extensively studied in the literature such as \citet{zhang.liu2019} and \citet{zhang2020PMA}.

\subsection{Discussion on the Existing Results}\label{subsec:discu}

Let $\mathbf{w}^L=\argmin_{\mathbf{w}\in \mathcal{H}_n} L_n(\mathbf{w})$ denote the optimal weight vector in terms of prediction loss. Considering that the aforementioned results of \citet{fang2023} are restricted to the special weight set $\mathcal{H}_n^{\delta}$, two natural questions arise:
\begin{itemize}
	\item[Q1.] Does their Lemma 1 remain valid when $\mathcal{H}_n^{\delta}$ is replaced by $\mathcal{H}_n$? That is, does $\Pr(\mathbf{w}^L=\mathbf{w}_{M_0+1}^0)\to 1$ hold?
	\item[Q2.] Does their Theorem 5, i.e., Results (i) and (ii), still hold when $\mathcal{H}_n^{\delta}$ is replaced by $\mathcal{H}_n$?
\end{itemize}

As discussed by \citet{fang2023}, intuitively, Questions Q1 and Q2 should have affirmative answers. Unfortunately, this assertion may not hold true. In the following, we present a toy example to illustrate this.

\begin{exam}\label{exam:toy}
Consider the linear regression model $y_i=\beta_1 x_{i1}+\beta_2 x_{i2}+\beta_3 x_{i3}+e_i$, where $e_i\sim N(0, \sigma^2)$. Assume that $\beta_2\neq 0$, $\beta_3=0$, and $\mathbf{K}=(1, 2, 3)^{\top}$. Therefore, $M_n=3$, $M_0=1$, and the second model is the true model. By Lemma \ref{lem:lossrisk} in the Appendix, we can rewrite $L_n(\mathbf{w})$ as
\begin{equation}\label{eq:newl}
	L_n(\mathbf{w})=L_n(\mathbf{w}_2^0)+w_1^2 \mathbf{y}^{\top}(\mathbf{P}_2-\mathbf{P}_1)\mathbf{y}-2w_1 \mathbf{y}^{\top}(\mathbf{P}_2-\mathbf{P}_1)\mathbf{e}+w_3^2 \mathbf{e}^{\top}(\mathbf{P}_3-\mathbf{P}_2)\mathbf{e}.
\end{equation}
Furthermore, we assume that the covariates are orthonormal that satisfy $\sum_{i=1}^n x_{ij}^2=n$ and $\sum_{i=1}^n x_{ij}x_{ik}=0$ when $j\neq k$. Then,
\begin{equation*}
	\mathbf{y}^{\top}(\mathbf{P}_2-\mathbf{P}_1)\mathbf{y}=\frac{1}{n}\left(n\beta_2+\sum_{i=1}^n x_{i2}e_i\right)^2\quad\text{and}\quad \mathbf{y}^{\top}(\mathbf{P}_2-\mathbf{P}_1)\mathbf{e}=\frac{1}{n}\left(\sum_{i=1}^n x_{i2}e_i\right)^2+\beta_2 \sum_{i=1}^n x_{i2}e_i.
\end{equation*}
Write $\mathbf{w}^L=(w_1^L,w_2^L,w_3^L)^{\top}$. By minimizing \eqref{eq:newl} over $\mathbf{w}\in \mathcal{H}_n$, we have
\begin{equation*}
	w_1^L=\begin{cases}
		0, & \text{if}~w_1^{\mathrm{opt}}\leq 0, \\
		w_1^{\mathrm{opt}}, & \text{if}~0<w_1^{\mathrm{opt}}<1, \\
		1, & \text{if}~w_1^{\mathrm{opt}}\geq 1,
	\end{cases}
\end{equation*}
$w_2^L=1-w_1^L$, and $w_3^L=0$, where
\begin{equation*}
	w_1^{\mathrm{opt}}=\frac{\mathbf{y}^{\top}(\mathbf{P}_2-\mathbf{P}_1)\mathbf{e}}{\mathbf{y}^{\top}(\mathbf{P}_2-\mathbf{P}_1)\mathbf{y}}=\frac{n^{-1}(\sum_{i=1}^n x_{i2}e_i)^2+\beta_2 \sum_{i=1}^n x_{i2}e_i}{n^{-1}(n\beta_2+\sum_{i=1}^n x_{i2}e_i)^2}.
\end{equation*}
Observe that $n^{-1/2}\sum_{i=1}^n x_{i2}e_i\sim N(0, \sigma^2)$. Therefore,
\begin{align}\label{eq:prob}
	\Pr(\mathbf{w}^L=\mathbf{w}_2^0)&=\Pr(w_2^L=1)=\Pr(w_1^{\mathrm{opt}}\leq 0) \notag \\
	&=\Pr\left\{\frac{1}{n}\left(\sum_{i=1}^n x_{i2}e_i\right)^2+\beta_2 \sum_{i=1}^n x_{i2}e_i\leq 0\right\} \notag \\
	&=\Pr(0\leq N(0, \sigma^2)\leq \sqrt{n}|\beta_2|) \notag \\
	&\to \frac{1}{2}.
\end{align}
As a result, $\Pr(\mathbf{w}^L=\mathbf{w}_2^0)\nrightarrow 1$ and $\Pr\{\inf_{\mathbf{w}\in \mathcal{H}_n} L_n(\mathbf{w})=L_n(\mathbf{w}_2^0)\}\nrightarrow 1$, which means that Lemma 1 of \citet{fang2023} does not hold when replacing $\mathcal{H}_n^{\delta}$ with $\mathcal{H}_n$. This provides a negative response to Question Q1 in this specific example.

Next, we investigate the asymptotic loss optimality of $\hat{\mathbf{w}}$ when $\phi_n=\log n$. Observe that
\begin{equation*}
	\frac{L_n(\hat{\mathbf{w}})}{\inf_{\mathbf{w}\in \mathcal{H}_n} L_n(\mathbf{w})}=\frac{L_n(\hat{\mathbf{w}})}{L_n(\mathbf{w}_2^0)}\times \frac{L_n(\mathbf{w}_2^0)}{\inf_{\mathbf{w}\in \mathcal{H}_n} L_n(\mathbf{w})}.
\end{equation*}
Under Condition \ref{cond:sigma} in the next section, Lemmas 1--2 of \citet{zhang2020PMA} established that when $\phi_n\to \infty$, $\hat{w}_1=O_p(\phi_n/n)$ and $\Pr(\hat{w}_3=0)\to 1$. This result along with \eqref{eq:newl} and the fact that $\mathbf{y}^{\top}(\mathbf{P}_2-\mathbf{P}_1)\mathbf{y}=O_p(n)$, $\mathbf{y}^{\top}(\mathbf{P}_2-\mathbf{P}_1)\mathbf{e}=O_p(1)$, and $L_n(\mathbf{w}_2^0)=\mathbf{e}^{\top}\mathbf{P}_2\mathbf{e}\sim \sigma^2 \chi^2_2$, yields that $L_n(\hat{\mathbf{w}})/L_n(\mathbf{w}_2^0)=1+O_p(\phi_n^2/n)$. Consequently, when $\phi_n=\log n$, the asymptotic loss optimality \eqref{eq:loss} holds if and only if $L_n(\mathbf{w}_2^0)/\inf_{\mathbf{w}\in \mathcal{H}_n} L_n(\mathbf{w})\to_p 1$. In Appendix \ref{apped:examp1}, we show that
\begin{equation}\label{eq:exmp1}
	\frac{\inf_{\mathbf{w}\in \mathcal{H}_n} L_n(\mathbf{w})}{L_n(\mathbf{w}_2^0)}\to_d U+(1-U) \mathrm{Beta}\left(\frac{1}{2},\frac{1}{2}\right),
\end{equation}
where $\to_d$ denotes convergence in distribution, $\mathrm{Beta}(a, b)$ is a random variable having Beta distribution with shape parameters $a, b>0$, and $U$ is a Bernoulli random variable with $\Pr(U=0)=\Pr(U=1)=1/2$ and is independent of $\mathrm{Beta}(1/2, 1/2)$. From \eqref{eq:exmp1}, it is clear that $L_n(\mathbf{w}_2^0)/\inf_{\mathbf{w}\in \mathcal{H}_n} L_n(\mathbf{w}) \nrightarrow_p 1$. In conclusion, when $\phi_n=\log n$, $\hat{\mathbf{w}}$ is not asymptotically loss optimal, i.e., \eqref{eq:loss} does not hold. This partially provides a negative response to Question Q2 in this example.
\end{exam}

We also conduct a simulation study to verify \eqref{eq:prob} and \eqref{eq:exmp1} in Example \ref{exam:toy}. We simulate $10000$ data sets from the model in Example \ref{exam:toy}, where $(\beta_1, \beta_2, \beta_3)=(-1, 0.1, 0)$, $x_{ij}$ are independently generated from $N(0, 1)$, and $e_i$ are i.i.d. from $N(0,1)$ and are independent of $x_{ij}$'s. Figure \ref{fig:exm1}(a) shows the simulated values of $\Pr(\mathbf{w}^L=\mathbf{w}_2^0)$ from $n=100$ to $2000$ with an increment of $100$. These values converge to around $0.5$ when $n\geq 1000$, aligning with \eqref{eq:prob}. Figure \ref{fig:exm1}(b) displays the kernel density estimates for the values of $L_n(\mathbf{w}^L)/L_n(\mathbf{w}_2^0)$ after excluding 1, for $n=100$, $1000$, and $2000$. The density estimates for $n=1000$ and $2000$ closely resemble the density curve of $\mathrm{Beta}(1/2, 1/2)$, which is consistent with \eqref{eq:exmp1}.

\begin{figure}[htpb]
	\centering
	\includegraphics[scale=0.65]{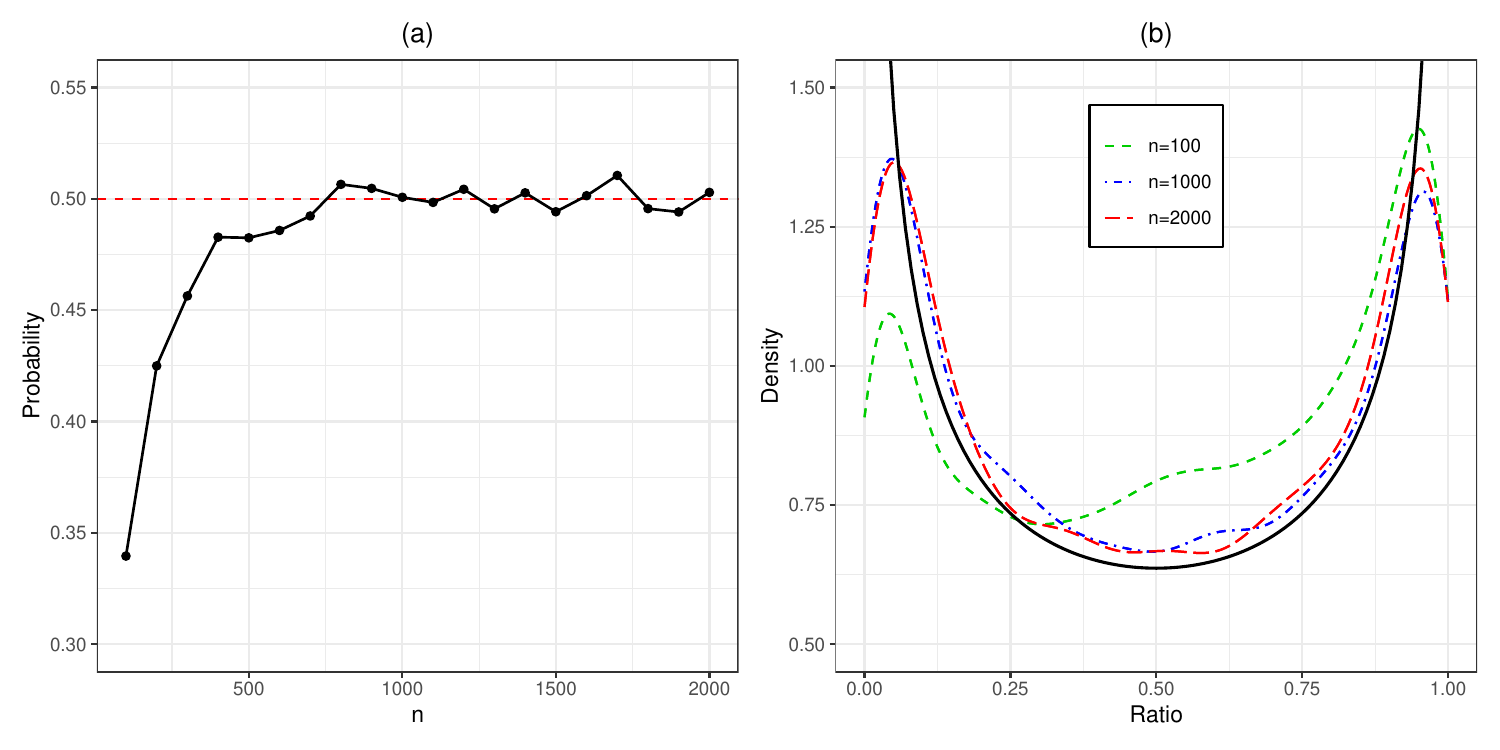}
	\caption{Simulation results for Example \ref{exam:toy}. (a): the simulated values of $\Pr(\mathbf{w}^L=\mathbf{w}_2^0)$. (b): the kernel density estimate for the values of $L_n(\mathbf{w}^L)/L_n(\mathbf{w}_2^0)$ after excluding 1, where the solid line is the density curve of $\mathrm{Beta}(1/2, 1/2)$.}\label{fig:exm1}
\end{figure}

In Lemma \ref{lem:loss} and Theorem \ref{thm:opt.loss} of the next section, we demonstrate that the above findings in Example \ref{exam:toy} remain consistent when the true model dimension is fixed. These results are opposite to the corresponding results obtained by \citet{fang2023} under the restricted weight set $\mathcal{H}_n^{\delta}$, which is somewhat surprising given our intuition. In the following section, we will also seek conditions that determine whether or not the asymptotic risk optimality \eqref{eq:risk} of $\hat{\mathbf{w}}$ holds, a topic that has not been addressed by \citet{fang2023}.

\section{Main Results}\label{sec:mainres}

In this section, we explore both the asymptotic loss and risk optimality of $\hat{\mathbf{w}}$. As pointed out in Subsection \ref{subsec:discu}, the result $\Pr(\mathbf{w}^L=\mathbf{w}_{M_0+1}^0)\to 1$ may not hold. This suggests that the proof procedures used by \citet{fang2023}, i.e., Steps 1 and 2 in Subsection \ref{subsec:review}, may be not applicable. Observe that
\begin{equation*}
	\frac{L_n(\hat{\mathbf{w}})}{\inf_{\mathbf{w}\in \mathcal{H}_n} L_n(\mathbf{w})}=\frac{L_n(\hat{\mathbf{w}})}{L_n(\mathbf{w}_{M_0+1}^0)}\times \frac{L_n(\mathbf{w}_{M_0+1}^0)}{\inf_{\mathbf{w}\in \mathcal{H}_n} L_n(\mathbf{w})}.
\end{equation*}
Therefore, if $L_n(\hat{\mathbf{w}})/L_n(\mathbf{w}_{M_0+1}^0)\to_p 1$, then the asymptotic loss optimality holds if and only if $L_n(\mathbf{w}_{M_0+1}^0)/\inf_{\mathbf{w}\in \mathcal{H}_n} L_n(\mathbf{w})$ conveges to $1$ in probability. Similarly, if $R_n(\hat{\mathbf{w}})/R_n(\mathbf{w}_{M_0+1}^0)\to_p 1$, then the asymptotic risk optimality holds if and only if $R_n(\mathbf{w}_{M_0+1}^0)/\inf_{\mathbf{w}\in \mathcal{H}_n} R_n(\mathbf{w})$ conveges to $1$. It is worth mentioning that in Appendixes \ref{subsec:thm1} and \ref{subsec:thm2}, we establish that $L_n(\hat{\mathbf{w}})/L_n(\mathbf{w}_{M_0+1}^0)\to_p 1$ and $R_n(\hat{\mathbf{w}})/R_n(\mathbf{w}_{M_0+1}^0)\to_p 1$ when $\phi_n=\log n$ under certain conditions. As a result, we can replace Step 1 in Subsection \ref{subsec:review} with the following:
\begin{enumerate}[\bf Step 1'.]
	\item Determine the conditions under which the true model is asymptotically loss or risk optimal, and the conditions under which it is not.
\end{enumerate}
Note that Step 1' does not depend on specific weight selection methods. In this section, we follow Step 1' and Step 2 to explore the asymptotic loss and risk optimality.

\subsection{Regularity Conditions}

Throughout this section, we assume that $E|e_i|^{4+\zeta}<\infty$ for some $\zeta>0$. Let $\lambda_{\min}(\mathbf{M})$ be the minimum eigenvalue of a positive definite matrix $\mathbf{M}$. We first state the regularity conditions required for main results.
\begin{condition}\label{cond:mu}
	$\|\bm{\mu}\|^2/n=O(1)$.
\end{condition}
\begin{condition}\label{cond:eigvalue}
	There exists a positive constant $\kappa_0$ such that $\lambda_{\min}(n^{-1}\mathbf{X}_{M_0+1}^{\top}\mathbf{X}_{M_0+1})\geq \kappa_0$.
\end{condition}
\begin{condition}\label{cond:sigma}
	There exist positive constants $0<c_1<c_2<\infty$ such that $\Pr(c_1\leq \hat{\sigma}^2/\sigma^2\leq c_2)\to 1$.
\end{condition}
\begin{condition}\label{cond:xcong}
	If $k_{M_0+1}$ is fixed, $n^{-1}\mathbf{X}_{M_0+1}^{\top}\mathbf{X}_{M_0+1}\to \mathbf{Q}$ and $n^{-1/2}\mathbf{X}_{M_0+1}^{\top} \mathbf{e}\to_d N(\mathbf{0}, \sigma^2\mathbf{Q})$, where $\mathbf{Q}$ is a positive definite matrix.
\end{condition}

Condition \ref{cond:mu} is a standard moment condition in the model averaging literature \citep{wan2010,liu2016}, which requires the average of $\mu_i^2$ is bounded. Condition \ref{cond:eigvalue} assumes that the eigenvalues of $n^{-1}\mathbf{X}_{M_0+1}^{\top}\mathbf{X}_{M_0+1}$ are uniformly
bounded away from zero for all $n$. This condition is commonly used for theoretical results with a diverging number of parameters \citep{zou2009,fang2023,zhang2020PMA}. Obviously, if the true model dimension is fixed and $\mathbf{X}_{M_0+1}$ is of full rank, Condition \ref{cond:eigvalue} is satisfied. Condition \ref{cond:sigma} is from \citet{yuan.yang2005}, \citet{zhang2020PMA}, and \citet{fang2023}, which essentially assumes that $\hat{\sigma}^2$ is neither too small nor too large. Note that Condition \ref{cond:sigma} does not require that $\hat{\sigma}^2$ is consistent and is thus easily satisfied. Condition \ref{cond:xcong} is adopted from \citet{liu2015joe} and \citet{zhang.liu2019}, which is quite mild. Note that this condition is not required if $k_{M_0+1}$ is diverging.

Let $\eta_n=\sum_{j\in \mathcal{A}_{M_0+1}\cap \mathcal{A}_{M_0}^c} \beta_j^2$. Note that $\eta_n$ measures the difference between the true model and the largest under-fitted model. Under Conditions \ref{cond:mu}--\ref{cond:eigvalue} and the assumption that the $(M_0+1)$th model is the true model, it can be easily verified that $0< \eta_n<\infty$. If the true model dimension is diverging, $\eta_n$ could converge to $0$. On the other hand, if the true model dimension is fixed, $\eta_n^{-1}=O(1)$.

\subsection{Asymptotic Loss Optimality}

In this subsection, we aim to determine the conditions under which $\hat{\mathbf{w}}$ achieves the asymptotic loss optimality \eqref{eq:loss}, and the conditions under which it does not in the sense that
\begin{equation*}
	\frac{L_n(\hat{\mathbf{w}})}{\inf_{\mathbf{w}\in \mathcal{H}_n} L_n(\mathbf{w})}\nrightarrow_p 1.
\end{equation*}

As mentioned at the beginning of this section, we first explore whether the true model is asymptotically loss optimal or not (Step 1').
\begin{lemma}\label{lem:loss}
	Assume that Conditions \ref{cond:mu}--\ref{cond:eigvalue} are satisfied.
	\begin{itemize}
	\item[(i)] If $k_{M_0+1}$ is fixed and Condition \ref{cond:xcong} is satisfied, then
	\begin{equation*}
		\frac{L_n(\mathbf{w}_{M_0+1}^0)}{\inf_{\mathbf{w}\in \mathcal{H}_n} L_n(\mathbf{w})} \nrightarrow_p 1.
	\end{equation*}
	\item[(ii)] If $k_{M_0+1}\to \infty$, $(k_{M_0+1}-k_1)/(n\eta_n)\to 0$, and $M_0/(k_{M_0+1}\eta_n)\to 0$, then
		\begin{equation*}
		\frac{L_n(\mathbf{w}_{M_0+1}^0)}{\inf_{\mathbf{w}\in \mathcal{H}_n} L_n(\mathbf{w})} \to_p 1.
	\end{equation*}
\end{itemize}
\end{lemma}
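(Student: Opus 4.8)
The plan is to pass to the orthogonal increments of the nested projections. Set $\mathbf{P}_0=\mathbf{0}$, $\mathbf{D}_l=\mathbf{P}_l-\mathbf{P}_{l-1}$ (so the $\mathbf{D}_l$ are mutually orthogonal projections), and reparametrize $\mathcal{H}_n$ by the cumulative weights $W_l=\sum_{m\ge l}w_m$; as $\mathbf{w}$ ranges over $\mathcal{H}_n$, the vector $(W_1,\ldots,W_{M_n})$ ranges exactly over the non-increasing sequences with $W_1=1$ and $W_{M_n}\ge 0$. Since $\mathbf{P}(\mathbf{w})=\sum_l W_l\mathbf{D}_l$ and $\mathbf{D}_l\bm{\mu}=\mathbf{0}$ for $l>M_0+1$, Lemma \ref{lem:lossrisk} gives the exact decomposition
\begin{equation*}
  L_n(\mathbf{w})=L_n(\mathbf{w}_{M_0+1}^0)+\sum_{l=2}^{M_0+1}\bigl[v_l^2\,\mathbf{y}^\top\mathbf{D}_l\mathbf{y}-2v_l\,\mathbf{y}^\top\mathbf{D}_l\mathbf{e}\bigr]+\sum_{l=M_0+2}^{M_n}W_l^2\,\mathbf{e}^\top\mathbf{D}_l\mathbf{e},
\end{equation*}
where $v_l=1-W_l$ and $L_n(\mathbf{w}_{M_0+1}^0)=\mathbf{e}^\top\mathbf{P}_{M_0+1}\mathbf{e}$, with feasibility equivalent to $0=v_1\le v_2\le\cdots\le v_{M_0+1}\le 1$ plus monotonicity of $(W_l)_{l>M_0+1}$. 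Writing $a_l=\|\mathbf{D}_l\bm{\mu}\|^2$, $b_l=\bm{\mu}^\top\mathbf{D}_l\mathbf{e}$, $c_l=\mathbf{e}^\top\mathbf{D}_l\mathbf{e}$, the entire argument reduces to controlling how far below $L_n(\mathbf{w}_{M_0+1}^0)$ this quadratic can be pushed.

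For part (i), where $k_{M_0+1}$ is fixed (so $M_0$ is fixed and $\eta_n\asymp 1$), I would exhibit an explicit feasible competitor that perturbs only the true increment: keep $W_l=1$ for $l\le M_0$, set $W_{M_0+1}$ to the constrained minimizer on $[0,1]$ of the single quadratic $f(W)=(1-W)^2a_{M_0+1}-2(1-W)Wb_{M_0+1}+W^2c_{M_0+1}$, and $W_l=0$ for $l>M_0+1$ (a mix of models $M_0$ and $M_0+1$). A direct computation gives the exact improvement over the true model,
\begin{equation*}
  L_n(\mathbf{w}_{M_0+1}^0)-\inf_{\mathbf{w}\in\mathcal{H}_n}L_n(\mathbf{w})\ge G_n:=\frac{(b_{M_0+1}+c_{M_0+1})^2}{\mathbf{y}^\top\mathbf{D}_{M_0+1}\mathbf{y}}\,\mathbf{1}\{b_{M_0+1}+c_{M_0+1}\ge 0\}.
\end{equation*}
Conditions \ref{cond:mu}--\ref{cond:eigvalue} give $a_{M_0+1}\asymp n$, while Condition \ref{cond:xcong} yields $b_{M_0+1}/\sqrt{a_{M_0+1}}\to_d N(0,\sigma^2)$, $c_{M_0+1}=O_p(1)$, and $L_n(\mathbf{w}_{M_0+1}^0)\to_d\sigma^2\chi^2_{k_{M_0+1}}$, jointly, by the continuous mapping theorem. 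Hence $G_n/L_n(\mathbf{w}_{M_0+1}^0)$ converges in distribution to a limit that is strictly positive on an event of asymptotic probability $1/2$, so $\inf_{\mathbf{w}}L_n/L_n(\mathbf{w}_{M_0+1}^0)\le 1-G_n/L_n(\mathbf{w}_{M_0+1}^0)$ stays bounded away from $1$ with positive probability; taking reciprocals gives the claim. This reproduces the $1/2$ of \eqref{eq:prob} and the $\mathrm{Beta}(1/2,1/2)$ limit of \eqref{eq:exmp1} in Example \ref{exam:toy}.

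For part (ii), where $k_{M_0+1}\to\infty$, the term-by-term minimization used above is fatally loose: a zero-signal intermediate increment ($a_l=0$, $l\le M_0$) would advertise a gain of the full $c_l$, and these could sum to order $k_{M_0}$, comparable to $L_n(\mathbf{w}_{M_0+1}^0)\asymp k_{M_0+1}$. The resolution is that monotonicity $v_l\le v_{M_0+1}$ forbids shrinking an intermediate increment without shrinking the true one, which is heavily penalized since $\mathbf{y}^\top\mathbf{D}_{M_0+1}\mathbf{y}\gtrsim n\eta_n$ with probability tending to one (using $a_{M_0+1}\ge n\kappa_0\eta_n$ from Condition \ref{cond:eigvalue} via a Schur-complement bound and $b_{M_0+1}=O_p(\sqrt{n\eta_n})$). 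I would therefore retain only the true-increment penalty, write $v_l=\sum_{j\le l}\delta_j$ with $\delta_j\ge0$, and solve the resulting linear program to get
\begin{equation*}
  L_n(\mathbf{w}_{M_0+1}^0)-\inf_{\mathbf{w}\in\mathcal{H}_n}L_n(\mathbf{w})\le\frac{\bigl(\max_{2\le j\le M_0+1}T_j^{+}\bigr)^2}{\mathbf{y}^\top\mathbf{D}_{M_0+1}\mathbf{y}},\qquad T_j=\mathbf{y}^\top(\mathbf{P}_{M_0+1}-\mathbf{P}_{j-1})\mathbf{e},
\end{equation*}
with $T_j^{+}=\max(T_j,0)$. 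Splitting $T_j=\bm{\mu}^\top(\mathbf{P}_{M_0+1}-\mathbf{P}_{j-1})\mathbf{e}+\mathbf{e}^\top(\mathbf{P}_{M_0+1}-\mathbf{P}_{j-1})\mathbf{e}$, I bound the first part by $\max_j|\cdot|^2\le\sum_j(\cdot)^2=O_p(M_0 n)$ (using $\|\bm{\mu}\|^2=O(n)$), and the second part, crucially because $j\ge 2$, by $\mathbf{e}^\top(\mathbf{P}_{M_0+1}-\mathbf{P}_1)\mathbf{e}=O_p(k_{M_0+1}-k_1)$. Dividing by $\mathbf{y}^\top\mathbf{D}_{M_0+1}\mathbf{y}\gtrsim n\eta_n$ and by $L_n(\mathbf{w}_{M_0+1}^0)$ (which concentrates at $\sigma^2k_{M_0+1}$ by Chebyshev, since $k_{M_0+1}\to\infty$), the two contributions become $O_p\bigl(M_0/(k_{M_0+1}\eta_n)\bigr)$ and $O_p\bigl((k_{M_0+1}-k_1)/(n\eta_n)\bigr)$, both $\to 0$ under the stated conditions; hence the ratio tends to $1$ in probability.

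The main obstacle is exactly the monotonicity coupling in part (ii): the naive decoupled relaxation is too weak by a factor of order $k_{M_0+1}$, so the proof must keep the large penalty $\mathbf{y}^\top\mathbf{D}_{M_0+1}\mathbf{y}$ on the true increment and transfer it, through $v_l\le v_{M_0+1}$, to every earlier increment. The secondary subtlety is extracting the restriction $j\ge 2$ so that $\mathbf{e}^\top\mathbf{P}_{M_0+1}\mathbf{e}$ is replaced by $\mathbf{e}^\top(\mathbf{P}_{M_0+1}-\mathbf{P}_1)\mathbf{e}$, which is what aligns the bound with the hypothesis $(k_{M_0+1}-k_1)/(n\eta_n)\to 0$ rather than the stronger $k_{M_0+1}/(n\eta_n)\to 0$.
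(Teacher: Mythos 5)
Your proof is correct and follows essentially the same route as the paper's: the passage to projection increments and cumulative weights is a cosmetic reparametrization of the decomposition in Lemma \ref{lem:lossrisk}, part (i) uses the identical competitor (a mixture of models $M_0$ and $M_0+1$) together with the CLT for $n^{-1/2}\mathbf{X}_{M_0+1}^{\top}\mathbf{e}$, and part (ii) uses the identical relaxation to a single quadratic in $\sum_{m\le M_0}w_m$ with the same $O_p\{(nM_0)^{1/2}\}$ and $O_p(k_{M_0+1}-k_1)$ bounds, leading to the same two rate conditions. The one imprecision is in part (i): the displayed inequality $L_n(\mathbf{w}_{M_0+1}^0)-\inf_{\mathbf{w}\in\mathcal{H}_n}L_n(\mathbf{w})\ge G_n$ fails on the event where the unconstrained minimizer of $f$ exceeds $1$ (there the constrained minimum is $f(1)$, which is strictly larger than $-G_n$), so you must additionally note that this event has vanishing probability --- immediate from $b_{M_0+1}=O_p(n^{1/2})$, $c_{M_0+1}=O_p(1)$, and $a_{M_0+1}\asymp n$ --- which is precisely the extra indicator term the paper carries and disposes of via $\Pr(w_{M_0}^{\mathrm{opt}}\ge 1)\to 0$.
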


Lemma \ref{lem:loss}(i) indicates that when the true model dimension is fixed, the true model is not asymptotically loss optimal. This result also provides a negative response to Question Q1 in the case of a fixed true model dimension. In the proof of this result in Appendix \ref{subsec:lem1}, we actually derive a more general result. Specifically, we demonstrate that for any constant $z>1$,
\begin{equation}\label{eq:infp}
	\liminf_{n\to\infty} \Pr\left\{\frac{L_n(\mathbf{w}_{M_0+1}^0)}{\inf_{\mathbf{w}\in \mathcal{H}_n} L_n(\mathbf{w})}\geq z\right\}\geq \frac{1}{2} \Pr\left\{ \frac{(\mathbf{v}^{\top}\mathbf{Z})^2}{\|\mathbf{Z}\|^2}\geq 1-\frac{1}{z}\right\},
\end{equation}
where $\mathbf{Z}\sim N(\mathbf{0}, \mathbf{I}_{k_{M_0+1}})$ and $\mathbf{v}$ is a $k_{M_0+1}\times 1$ unit vector defined in \eqref{eq:unit} of the Appendix. Given that $\|\mathbf{v}\|=1$, it is easy to see that the support of $(\mathbf{v}^{\top}\mathbf{Z})^2/\|\mathbf{Z}\|^2$ is $[0, 1]$. By using Theorems 3--4 of \citet{forchini2002}, it is possible to derive an exact distribution function of $(\mathbf{v}^{\top}\mathbf{Z})^2/\|\mathbf{Z}\|^2$. In the special case where $\mathbf{Q}=\sigma_X^2 \mathbf{I}_{k_{M_0+1}}$ with some $\sigma_X^2>0$ and $k_{M_0+1}-k_{M_0}=1$, we obtain $\mathbf{v}=(0, \ldots, 0, \pm1)^{\top}$ and $(\mathbf{v}^{\top}\mathbf{Z})^2/\|\mathbf{Z}\|^2 \sim \mathrm{Beta}(1/2, k_{M_0}/2)$. Therefore, from \eqref{eq:infp}, we can establish Lemma \ref{lem:loss}(i).

In Lemma \ref{lem:loss}(ii), the condition $M_0/(k_{M_0+1}\eta_n)\to 0$ constrains the rate of increase in the number of under-fitted models $M_0$. Therefore, Lemma \ref{lem:loss}(ii) tells us that when the true model dimension diverges, but not too fast, and the number of under-fitted models does not diverge too fast, the true model is asymptotically loss optimal.

\begin{remark}\label{rem:special.w}
	We can also explore the asymptotic loss optimality of the true model when the weights are restricted to the special weight set $\mathcal{H}_n(N)$ or $\mathcal{H}_n^{\delta}$. Assuming that Conditions \ref{cond:mu}--\ref{cond:eigvalue} are satisfied, the specific results are as follows.
	\begin{itemize}
	\item[(i)] If $(k_{M_0+1}-k_1)/(n\eta_n)\to 0$ and $M_0/(n\eta_n^2)\to 0$, then $\Pr\{\inf_{\mathbf{w}\in \mathcal{H}_n(N)} L_n(\mathbf{w})=L_n(\mathbf{w}_{M_0+1}^0)\}\to 1$.
	\item[(ii)] Assume that $\eta_n\geq c_{\tau_0} n^{-\tau_0}$ for a positive constant $c_{\tau_0}$. If $(k_{M_0+1}-k_1)/n^{1-2\tau_0}\to 0$ and $M_0/n^{1-4\tau_0}\to 0$, then $\Pr\{\inf_{\mathbf{w}\in \mathcal{H}_n^{\delta}} L_n(\mathbf{w})=L_n(\mathbf{w}_{M_0+1}^0)\}\to 1$.
\end{itemize}
The proofs are provided in Appendix \ref{subsec:special.w}. In particular, if $k_{M_0+1}$ is fixed, then
\begin{equation*}
	\frac{L_n(\mathbf{w}_{M_0+1}^0)}{\inf_{\mathbf{w}\in \mathcal{H}_n(N)} L_n(\mathbf{w})} \to_p 1\quad \text{and}\quad \frac{L_n(\mathbf{w}_{M_0+1}^0)}{\inf_{\mathbf{w}\in \mathcal{H}_n^{\delta}} L_n(\mathbf{w})} \to_p 1,
\end{equation*}
which differ from the result in Lemma \ref{lem:loss}(i). This also indicates that model averaging with the weight sets $\mathcal{H}_n$ and $\mathcal{H}_n(N)$ (or $\mathcal{H}_n^{\delta}$) can exhibit opposite asymptotic properties.
\end{remark}

The following theorem establishes the asymptotic loss optimality of the least squares model averaging with $\phi_n=2$ and $\phi_n=\log n$.
\begin{theorem}\label{thm:opt.loss}
	Assume that Conditions \ref{cond:mu}--\ref{cond:xcong} are satisfied.
	\begin{enumerate}[(i)]
		\item Consider $\phi_n\to \infty$. If $k_{M_0+1}$ is fixed and $\phi_n^2/n \to 0$, then the asymptotic loss optimality does not hold. If $k_{M_0+1}\to \infty$, $\phi_n^2(k_{M_0+1}-k_1)/(n\eta_n)\to 0$, and $M_0/(k_{M_0+1}\eta_n)\to 0$, then the asymptotic loss optimality holds.
		\item Consider $\phi_n=2$. If $(k_{M_0+1}-k_1)/(n\eta_n)\to 0$ and $M_0/(n\eta_n^2)\to 0$, then as long as $\sum_{m>M_0+1} \hat{w}_m^2 (\frac{\mathbf{e}^{\top}\mathbf{P}_m\mathbf{e}}{\mathbf{e}^{\top}\mathbf{P}_{M_0+1}\mathbf{e}}-1)\nrightarrow_p 0$, the asymptotic loss optimality does not hold. If $k_{M_0+1}\to \infty$, $(k_{M_0+1}-k_1)/(n\eta_n)\to 0$, $M_0/(k_{M_0+1}\eta_n)\to 0$, and $k_{M_n}/k_{M_0+1}\to 1$, then the asymptotic loss optimality holds.
	\end{enumerate}
\end{theorem}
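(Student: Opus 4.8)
The plan is to build on the multiplicative identity
\[
\frac{L_n(\hat{\mathbf{w}})}{\inf_{\mathbf{w}\in\mathcal{H}_n}L_n(\mathbf{w})}=\frac{L_n(\hat{\mathbf{w}})}{L_n(\mathbf{w}_{M_0+1}^0)}\times\frac{L_n(\mathbf{w}_{M_0+1}^0)}{\inf_{\mathbf{w}\in\mathcal{H}_n}L_n(\mathbf{w})}
\]
recorded just before the statement, so that the second factor is delivered by Lemma \ref{lem:loss} and only the first factor requires fresh work. For that factor I would start from the loss representation in Lemma \ref{lem:lossrisk}, writing $L_n(\hat{\mathbf{w}})-L_n(\mathbf{w}_{M_0+1}^0)$ as an under-fitting part (a bias term quadratic in the weights attached to the models $m\le M_0$, plus a cross term against $\mathbf{e}$) and an over-fitting part; dividing by $L_n(\mathbf{w}_{M_0+1}^0)=\mathbf{e}^\top\mathbf{P}_{M_0+1}\mathbf{e}$ turns the over-fitting part into the nonnegative quantity $\sum_{m>M_0+1}\hat{w}_m^2(\mathbf{e}^\top\mathbf{P}_m\mathbf{e}/\mathbf{e}^\top\mathbf{P}_{M_0+1}\mathbf{e}-1)$ that appears in the hypothesis.

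For part (i), with $\phi_n\to\infty$, I would import the weight-concentration results of \citet{zhang2020PMA} (their Lemmas 1--2): under Condition \ref{cond:sigma}, the aggregate weight on the under-fitted models is $O_p(\phi_n/(n\eta_n))$ and $\Pr(\hat{w}_m=0\text{ for all }m>M_0+1)\to1$. The latter annihilates the over-fitting part with probability tending to one; the former, combined with $\mathbf{y}^\top(\mathbf{P}_{M_0+1}-\mathbf{P}_{M_0})\mathbf{y}\asymp n\eta_n$, $L_n(\mathbf{w}_{M_0+1}^0)\asymp k_{M_0+1}$, and a Cauchy--Schwarz bound showing the cross term is of smaller order than the bias term (because $\phi_n\to\infty$), makes the normalized under-fitting part vanish precisely when $\phi_n^2/n\to0$ in the fixed case and when $\phi_n^2(k_{M_0+1}-k_1)/(n\eta_n)\to0$ in the diverging case. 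Hence $L_n(\hat{\mathbf{w}})/L_n(\mathbf{w}_{M_0+1}^0)\to_p1$. Multiplying by Lemma \ref{lem:loss}(i), whose second factor does not tend to one, gives non-optimality in the fixed case; multiplying by Lemma \ref{lem:loss}(ii) --- applicable because $\phi_n\to\infty$ makes $\phi_n^2(k_{M_0+1}-k_1)/(n\eta_n)\to0$ imply $(k_{M_0+1}-k_1)/(n\eta_n)\to0$ --- gives optimality in the diverging case, each step closed by Slutsky's theorem.

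For part (ii), with $\phi_n=2$, the decisive difference is that MMA leaves the over-fitted weights $O_p(1)$ rather than zero. Under $(k_{M_0+1}-k_1)/(n\eta_n)\to0$ and the stated condition on $M_0$, the under-fitted weights still vanish fast enough that the normalized under-fitting part is $o_p(1)$, so $L_n(\hat{\mathbf{w}})/L_n(\mathbf{w}_{M_0+1}^0)=1+o_p(1)+\sum_{m>M_0+1}\hat{w}_m^2(\mathbf{e}^\top\mathbf{P}_m\mathbf{e}/\mathbf{e}^\top\mathbf{P}_{M_0+1}\mathbf{e}-1)$, the last sum being nonnegative. In the negative case I would use $\inf_{\mathbf{w}\in\mathcal{H}_n}L_n(\mathbf{w})\le L_n(\mathbf{w}_{M_0+1}^0)$, so that $L_n(\hat{\mathbf{w}})/\inf_{\mathbf{w}\in\mathcal{H}_n}L_n(\mathbf{w})\ge L_n(\hat{\mathbf{w}})/L_n(\mathbf{w}_{M_0+1}^0)$, and the assumed non-vanishing of the over-fitting sum prevents convergence to one. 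In the positive case I would instead control the over-fitting part crudely, noting $\sum_{m>M_0+1}\hat{w}_m^2\le1$ and nestedness give $\sum_{m>M_0+1}\hat{w}_m^2(\mathbf{e}^\top\mathbf{P}_m\mathbf{e}/\mathbf{e}^\top\mathbf{P}_{M_0+1}\mathbf{e}-1)\le\mathbf{e}^\top(\mathbf{P}_{M_n}-\mathbf{P}_{M_0+1})\mathbf{e}/\mathbf{e}^\top\mathbf{P}_{M_0+1}\mathbf{e}\asymp(k_{M_n}-k_{M_0+1})/k_{M_0+1}$, which vanishes by $k_{M_n}/k_{M_0+1}\to1$; together with the vanishing under-fitting part and Lemma \ref{lem:loss}(ii) this yields $L_n(\hat{\mathbf{w}})/\inf_{\mathbf{w}\in\mathcal{H}_n}L_n(\mathbf{w})\to_p1$.

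The hardest step will be the uniform control of the under-fitting part in the diverging-dimension regimes: one must add up the contributions of the $M_0$ under-fitted weights and their random-sign cross terms against $\mathbf{e}$ without losing the rates, which is exactly where Condition \ref{cond:eigvalue} and the ceilings on $M_0$ and $k_{M_0+1}-k_1$ enter, and where the per-weight bounds of \citet{zhang2020PMA} must be upgraded to an aggregate bound on the whole under-fitting part. A secondary point is to confirm that the signed cross term cannot, through cancellation, disturb the argument; bounding it by Cauchy--Schwarz against the nonnegative bias term and the noise level controlled by $\hat{\sigma}^2$ neutralizes this.
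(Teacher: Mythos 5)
Your proposal is correct and follows essentially the same route as the paper: the same multiplicative factorization through $L_n(\mathbf{w}_{M_0+1}^0)$, the same decomposition of $L_n(\hat{\mathbf{w}})-L_n(\mathbf{w}_{M_0+1}^0)$ from Lemma \ref{lem:lossrisk} into under-fitting bias, cross, and over-fitting terms, the same weight-concentration inputs, and the same treatment of the two $\phi_n=2$ subcases (lower-bounding by the $\sum_{m>M_0+1}\hat{w}_m^2$ sum for non-optimality, upper-bounding via $(\sum_{m>M_0+1}\hat{w}_m)^2$ and $k_{M_n}/k_{M_0+1}\to 1$ for optimality). The only implementation differences are that the paper proves its own aggregate bound $\sum_{m\le M_0}\hat{w}_m=O_p\{\phi_n(k_{M_0+1}-k_1)/(n\eta_n)\}$ (Lemma \ref{lem:what}) rather than importing the per-weight bounds of \citet{zhang2020PMA}, and controls the cross term by a Markov-plus-union maximal inequality giving $\max_{m\le M_0}|\bm{\mu}^{\top}(\mathbf{P}_{M_0+1}-\mathbf{P}_m)\mathbf{e}|=O_p\{(nM_0)^{1/2}\}$ --- exactly the step you correctly flagged as the crux where the condition $M_0/(k_{M_0+1}\eta_n)\to 0$ enters.
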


The first part of Theorem \ref{thm:opt.loss}(i) reveals that when $\phi_n=\log n$ and the true model dimension is fixed, the least squares model averaging is not asymptotically loss optimal because the asymptotic loss optimality of the true model does not hold. This finding is opposite to the corresponding result of \citet{fang2023} and thus partially provides a negative response to Question Q2. In the proof of this result in Appendix \ref{subsec:thm1}, we derive a more general result. Specifically, we demonstrate that if $\phi_n\to \infty$ and $\phi_n^2/n \to 0$, then for any constant $z>1$,
\begin{equation}\label{eq:infpw}
	\liminf_{n\to\infty} \Pr\left\{\frac{L_n(\hat{\mathbf{w}})}{\inf_{\mathbf{w}\in \mathcal{H}_n} L_n(\mathbf{w})}\geq z\right\}\geq \frac{1}{2} \Pr\left\{ \frac{(\mathbf{v}^{\top}\mathbf{Z})^2}{\|\mathbf{Z}\|^2}\geq 1-\frac{1}{z}\right\},
\end{equation}
where $\mathbf{Z}$ and $\mathbf{v}$ are the same as those in \eqref{eq:infp}. It follows from \eqref{eq:infpw} that the asymptotic loss optimality of $\hat{\mathbf{w}}$ does not hold.

The second part of Theorem \ref{thm:opt.loss}(i) suggests that when $\phi_n=\log n$, the true model dimension diverges, but not too fast, and the number of under-fitted models does not diverge too fast, the least squares model averaging is asymptotically loss optimal. This result arises because the asymptotic loss optimality of the true model holds, and the model averaging method pushes all the weights to the true model.

When $\phi_n=2$ and the model dimensions are fixed, \citet{fang2023} established that $\sum_{m>M_0+1} \hat{w}_m^2 (\frac{\mathbf{e}^{\top}\mathbf{P}_m\mathbf{e}}{\mathbf{e}^{\top}\mathbf{P}_{M_0+1}\mathbf{e}}-1)$ does not converge to $0$ in probability. This is because $\frac{\mathbf{e}^{\top}\mathbf{P}_m\mathbf{e}}{\mathbf{e}^{\top}\mathbf{P}_{M_0+1}\mathbf{e}}\nrightarrow_p 1$, and $\{\hat{w}_m, m\geq M_0+1\}$ converges to a nondegenerate random weight vector in distribution \citep{zhang.liu2019}. So the first part of Theorem \ref{thm:opt.loss}(ii) actually shows that MMA is not asymptotically loss optimal with fixed model dimensions because it assigns nonnegligible weights to over-fitted models. This result coincides with the corresponding result of \citet{fang2023}. It is important to note that another reason for this result is that the asymptotic loss optimality of the true model does not hold.

The second part of Theorem \ref{thm:opt.loss}(ii) implies that when the true model dimension diverges, MMA is asymptotically loss optimal under two key conditons. The first condition is that $(k_{M_0+1}-k_1)/(n\eta_n)\to 0$ and $M_0/(k_{M_0+1}\eta_n)\to 0$, ensuring the asymptotic loss optimality of the true model. The other condition is that $k_{M_n}/k_{M_0+1}\to 1$, indicating that the over-fitted models have no significant difference from the true model in the large sample sense. Although MMA may assign nonnegligible weights to over-fitted models, the condition $k_{M_n}/k_{M_0+1}\to 1$ ensures that these over-fitted models can be considered negligible.

\begin{remark}\label{rem:subgauss}
	If $e_i$ is further assumed to be sub-Gaussian with a variance proxy $\bar{\sigma}^2$, that is $E\{\exp(\alpha e_i)\}\leq \exp(\bar{\sigma}^2 \alpha^2/2)$ for all $\alpha\in \mathbb{R}$, then the conditions $M_0/(k_{M_0+1}\eta_n)\to 0$ and $M_0/(n\eta_n^2)\to 0$ in Lemma \ref{lem:loss} and Theorem \ref{thm:opt.loss} can be relaxed to $\log(2M_0)/(k_{M_0+1}\eta_n)\to 0$ and $\log(2M_0)/(n\eta_n^2)\to 0$, respectively. See Appendix \ref{subsec:subg} for more details.
\end{remark}

\subsection{Asymptotic Risk Optimality}

In this subsection, we aim to determine the conditions under which $\hat{\mathbf{w}}$ achieves the asymptotic risk optimality \eqref{eq:risk}, and the conditions under which it does not in the sense that
\begin{equation*}
	\frac{R_n(\hat{\mathbf{w}})}{\inf_{\mathbf{w}\in \mathcal{H}_n} R_n(\mathbf{w})}\nrightarrow_p 1.
\end{equation*}

As mentioned at the beginning of this section, we first explore whether the true model is asymptotically risk optimal or not (Step 1').
\begin{lemma}\label{lem:risk}
	Assume that Conditions \ref{cond:mu}--\ref{cond:eigvalue} are satisfied. If $(k_{M_0+1}-k_1)/(n\eta_n)\to 0$, then
	\begin{equation*}
		\frac{R_n(\mathbf{w}_{M_0+1}^0)}{\inf_{\mathbf{w}\in \mathcal{H}_n} R_n(\mathbf{w})}\to 1.
	\end{equation*}
\end{lemma}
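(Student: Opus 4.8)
The plan is to reduce the infimum of the risk to a one-dimensional minimization that exploits the nested structure, and to show that the increment separating the true model from the largest under-fitted model is the binding term. Since $R_n$ is a conditional expectation given the (deterministic) covariates, the convergence asserted is an ordinary limit of real numbers, so no stochastic arguments are needed.

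First I would record the exact bias--variance decomposition of the risk. Writing $\mathbf{D}_l=\mathbf{P}_l-\mathbf{P}_{l-1}$ with $\mathbf{P}_0=\mathbf{0}$, nestedness makes $\{\mathbf{D}_l\}$ mutually orthogonal idempotents with $\mathrm{tr}(\mathbf{D}_l)=d_l:=k_l-k_{l-1}$, and setting $W_l=\sum_{m\ge l}w_m$ gives $\mathbf{P}(\mathbf{w})=\sum_{l=1}^{M_n}W_l\mathbf{D}_l$. Because the $(M_0+1)$th model is true, $\bm{\mu}\in\mathrm{range}(\mathbf{P}_{M_0+1})$, so $\bm{\nu}_l:=\mathbf{D}_l\bm{\mu}=\mathbf{0}$ for $l>M_0+1$ and $W_1=1$. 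A direct computation then yields
\[
R_n(\mathbf{w})=\sum_{l=2}^{M_0+1}(1-W_l)^2\|\bm{\nu}_l\|^2+\sigma^2\sum_{l=1}^{M_n}W_l^2 d_l.
\]
Evaluating at $\mathbf{w}_{M_0+1}^0$ (i.e.\ $W_l=1$ for $l\le M_0+1$ and $W_l=0$ otherwise) gives $R_n(\mathbf{w}_{M_0+1}^0)=\sigma^2 k_{M_0+1}$, and since $\mathbf{w}_{M_0+1}^0\in\mathcal{H}_n$ we get $\inf_{\mathbf{w}\in\mathcal{H}_n}R_n(\mathbf{w})\le\sigma^2 k_{M_0+1}$, so the ratio in question is always $\ge1$. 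It therefore suffices to establish the matching lower bound $\inf_{\mathbf{w}\in\mathcal{H}_n}R_n(\mathbf{w})\ge\sigma^2 k_{M_0+1}(1-o(1))$.

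For the lower bound I would use that a valid weight vector corresponds exactly to a monotone sequence $1=W_1\ge W_2\ge\cdots\ge W_{M_n}\ge0$. Fix $t:=W_{M_0+1}$; monotonicity forces $W_l\ge t$ for every $l\le M_0+1$. Retaining only the $l=1$ and $l=M_0+1$ contributions and discarding the nonnegative over-fitted variance terms gives $R_n(\mathbf{w})\ge\sigma^2 k_1+\sigma^2 t^2(k_{M_0+1}-k_1)+(1-t)^2\|\bm{\nu}_{M_0+1}\|^2$, and minimizing this elementary quadratic over $t\in[0,1]$ produces
\[
\inf_{\mathbf{w}\in\mathcal{H}_n}R_n(\mathbf{w})\ge\sigma^2 k_1+\frac{\sigma^2(k_{M_0+1}-k_1)\,\|\bm{\nu}_{M_0+1}\|^2}{\sigma^2(k_{M_0+1}-k_1)+\|\bm{\nu}_{M_0+1}\|^2}.
\]
Isolating the $\sigma^2 k_1$ term rather than folding $l=1$ into the $t^2$ bound is exactly what replaces $k_{M_0+1}$ by $k_{M_0+1}-k_1$, matching the hypothesis.

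It remains to bound the signal $\|\bm{\nu}_{M_0+1}\|^2$ from below. Writing $\bm{\mu}=\mathbf{X}_{M_0}\bm{\beta}+\mathbf{X}_\Delta\bm{\beta}_\Delta$ with $\bm{\beta}_\Delta=(\beta_j,\,j\in\mathcal{A}_{M_0+1}\cap\mathcal{A}_{M_0}^c)^\top$, we have $\bm{\nu}_{M_0+1}=(\mathbf{I}_n-\mathbf{P}_{M_0})\mathbf{X}_\Delta\bm{\beta}_\Delta$, so $\|\bm{\nu}_{M_0+1}\|^2=\bm{\beta}_\Delta^\top\mathbf{S}\bm{\beta}_\Delta$, where $\mathbf{S}=\mathbf{X}_\Delta^\top(\mathbf{I}_n-\mathbf{P}_{M_0})\mathbf{X}_\Delta$ is the Schur complement of $\mathbf{X}_{M_0}^\top\mathbf{X}_{M_0}$ in $\mathbf{X}_{M_0+1}^\top\mathbf{X}_{M_0+1}$. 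The standard Schur-complement eigenvalue inequality gives $\lambda_{\min}(\mathbf{S})\ge\lambda_{\min}(\mathbf{X}_{M_0+1}^\top\mathbf{X}_{M_0+1})\ge\kappa_0 n$ by Condition \ref{cond:eigvalue}, whence $\|\bm{\nu}_{M_0+1}\|^2\ge\kappa_0 n\eta_n$. Substituting and using $\tfrac{k_{M_0+1}-k_1}{k_{M_0+1}}\le1$ together with $\tfrac{\varepsilon}{1+\varepsilon}\le\varepsilon$ yields
\[
\frac{\inf_{\mathbf{w}\in\mathcal{H}_n}R_n(\mathbf{w})}{\sigma^2 k_{M_0+1}}\ge 1-\frac{\sigma^2(k_{M_0+1}-k_1)}{\|\bm{\nu}_{M_0+1}\|^2}\ge 1-\frac{\sigma^2(k_{M_0+1}-k_1)}{\kappa_0 n\eta_n},
\]
and the hypothesis $(k_{M_0+1}-k_1)/(n\eta_n)\to0$ makes the last term vanish. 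Combined with the trivial upper bound this gives the claim. The main obstacle is precisely the lower-bound step: one must recognize that the monotonicity of $\{W_l\}$ is what prevents the infimum from dropping below the true-model risk, and must keep the variance accounting sharp enough (separating out $\sigma^2 k_1$) so that the binding quantity is $k_{M_0+1}-k_1$ rather than $k_{M_0+1}$; the remaining manipulations are elementary.
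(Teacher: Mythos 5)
Your proof is correct and follows essentially the same route as the paper's: both arguments reduce $\inf_{\mathbf{w}\in\mathcal{H}_n}R_n(\mathbf{w})-R_n(\mathbf{w}_{M_0+1}^0)$ to a one-dimensional quadratic in the aggregate weight (your $t=W_{M_0+1}$ equals $1-\sum_{m\le M_0}w_m$, the paper's variable), lower-bound the bias increment $\bm{\mu}^{\top}(\mathbf{P}_{M_0+1}-\mathbf{P}_{M_0})\bm{\mu}\ge\kappa_0 n\eta_n$ via the same Schur-complement eigenvalue inequality (the paper's Lemma \ref{lem:mumu}), and arrive at the same $1-O\{(k_{M_0+1}-k_1)/(n\eta_n)\}$ lower bound. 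Your orthogonal-increment parametrization $\mathbf{P}(\mathbf{w})=\sum_l W_l\mathbf{D}_l$ is only a repackaging of the paper's Lemma \ref{lem:lossrisk} expansion around $\mathbf{w}_{M_0+1}^0$, so the substance is identical.
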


The condition $(k_{M_0+1}-k_1)/(n\eta_n)\to 0$ is clearly satisfied when the true model dimension is fixed. Therefore, Lemma \ref{lem:risk} implies that as long as the true model dimension does not diverge too fast, the true model is asymptotically risk optimal. Comparing this result with Lemma \ref{lem:loss}(i), an intriguing observation is that when the true model dimension is fixed, the true model is not asymptotically loss optimal, but it is asymptotically risk optimal.

Now we explore the asymptotic risk optimality of the least squares model averaging with $\phi_n=2$ and $\phi_n=\log n$ in the following theorem.
\begin{theorem}\label{thm:opt.risk}
	Assume that Conditions \ref{cond:mu}--\ref{cond:sigma} are satisfied.
	\begin{enumerate}[(i)]
		\item Consider $\phi_n\to \infty$. If $\phi_n^2 (k_{M_0+1}-k_1)/(n\eta_n^2)\to 0$, then the asymptotic risk optimality holds.
		\item Consider $\phi_n=2$. If $(k_{M_0+1}-k_1)/(n\eta_n^2)\to 0$, then as long as $\sum_{m>M_0+1} \hat{w}_m^2 (\frac{k_m}{k_{M_0+1}}-1)\nrightarrow_p 0$, the asymptotic risk optimality does not hold. If $k_{M_0+1}\to \infty$, $(k_{M_0+1}-k_1)/(n\eta_n^2)\to 0$, and $k_{M_n}/k_{M_0+1}\to 1$, then the asymptotic risk optimality holds.
	\end{enumerate}
\end{theorem}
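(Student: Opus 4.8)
The plan is to follow the Step~1$'$/Step~2 scheme. Lemma~\ref{lem:risk} already supplies Step~1$'$: since all the dimension conditions imposed here force $(k_{M_0+1}-k_1)/(n\eta_n)\to0$ (because $\phi_n^2\geq1$ eventually and $\eta_n=O(1)$ under Conditions~\ref{cond:mu}--\ref{cond:eigvalue}), the true model is asymptotically risk optimal, i.e. $R_n(\mathbf{w}_{M_0+1}^0)/\inf_{\mathbf{w}\in\mathcal{H}_n}R_n(\mathbf{w})\to1$. Because
\[
\frac{R_n(\hat{\mathbf{w}})}{\inf_{\mathbf{w}\in\mathcal{H}_n}R_n(\mathbf{w})}=\frac{R_n(\hat{\mathbf{w}})}{R_n(\mathbf{w}_{M_0+1}^0)}\cdot\frac{R_n(\mathbf{w}_{M_0+1}^0)}{\inf_{\mathbf{w}\in\mathcal{H}_n}R_n(\mathbf{w})},
\]
everything reduces to the behaviour of $R_n(\hat{\mathbf{w}})/R_n(\mathbf{w}_{M_0+1}^0)$, which I would study through an exact risk identity. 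Since $\bm{\mu}=\mathbf{P}_{M_0+1}\bm{\mu}$, for any fixed $\mathbf{w}$ we have $R_n(\mathbf{w})=\|(\mathbf{I}_n-\mathbf{P}(\mathbf{w}))\bm{\mu}\|^2+\sigma^2\mathrm{tr}\{\mathbf{P}(\mathbf{w})^2\}$ and $R_n(\mathbf{w}_{M_0+1}^0)=\sigma^2 k_{M_0+1}$. Exploiting the nestedness $\mathbf{P}_m\mathbf{P}_{m'}=\mathbf{P}_{\min(m,m')}$ and writing $W_j(\mathbf{w})=\sum_{m\geq j}w_m$, one gets $\mathrm{tr}\{\mathbf{P}(\mathbf{w})^2\}=\sum_j(k_j-k_{j-1})W_j(\mathbf{w})^2$ (with $k_0:=0$), so that
\[
R_n(\mathbf{w})-\sigma^2 k_{M_0+1}=\Bigl\|\sum_{m\leq M_0}w_m(\mathbf{P}_{M_0+1}-\mathbf{P}_m)\bm{\mu}\Bigr\|^2-\sigma^2\!\!\sum_{j\leq M_0+1}\!\!(k_j-k_{j-1})(1-W_j^2)+\sigma^2\!\!\sum_{j>M_0+1}\!\!(k_j-k_{j-1})W_j^2.
\]
I would then plug in $\hat{\mathbf{w}}$ and control the bias term, the nonpositive under-fitting variance term, and the nonnegative over-fitting variance term after dividing by $\sigma^2 k_{M_0+1}$.

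For part~(i) ($\phi_n\to\infty$) I would invoke the weight behaviour under a diverging penalty established in Appendix~\ref{subsec:thm1} (cf.\ Lemmas~1--2 of \citet{zhang2020PMA}): the over-fitted weights vanish, $\sum_{m>M_0+1}\hat{w}_m\to_p0$, and the under-fitted weight mass $\sum_{m\leq M_0}\hat{w}_m$ vanishes at a rate proportional to $\phi_n$. The over-fitting term then tends to $0$ in probability, while the bias and variance-reduction terms, bounded via Condition~\ref{cond:mu} ($\|\bm{\mu}\|^2=O(n)$) and Condition~\ref{cond:eigvalue}, are shown to be of order $\phi_n^2(k_{M_0+1}-k_1)/(n\eta_n^2)\to0$ relative to $\sigma^2 k_{M_0+1}$. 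Hence $R_n(\hat{\mathbf{w}})/R_n(\mathbf{w}_{M_0+1}^0)\to_p1$, and combining with Lemma~\ref{lem:risk} gives the asymptotic risk optimality.

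For part~(ii) ($\phi_n=2$) I would treat the two assertions through the same identity. Dropping the nonnegative cross terms in the over-fitting sum yields the lower bound $\sum_{j>M_0+1}(k_j-k_{j-1})\hat{W}_j^2\geq\sum_{m>M_0+1}\hat{w}_m^2(k_m-k_{M_0+1})$, which after division by $\sigma^2 k_{M_0+1}$ is exactly $\sum_{m>M_0+1}\hat{w}_m^2(k_m/k_{M_0+1}-1)$. Under $(k_{M_0+1}-k_1)/(n\eta_n^2)\to0$ the under-fitted weight mass is small enough that both the bias and the variance-reduction terms are $o_p(k_{M_0+1})$, so $R_n(\hat{\mathbf{w}})/R_n(\mathbf{w}_{M_0+1}^0)-1$ is, up to $o_p(1)$, at least $\sum_{m>M_0+1}\hat{w}_m^2(k_m/k_{M_0+1}-1)$; since this does not converge to $0$ in probability by assumption, $R_n(\hat{\mathbf{w}})/R_n(\mathbf{w}_{M_0+1}^0)\nrightarrow_p1$, and with Lemma~\ref{lem:risk} the optimality fails. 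For the converse, the full over-fitting contribution obeys $\sum_{j>M_0+1}(k_j-k_{j-1})\hat{W}_j^2\leq(k_{M_n}-k_{M_0+1})\bigl(\sum_{m>M_0+1}\hat{w}_m\bigr)^2\leq k_{M_n}-k_{M_0+1}$, so $k_{M_n}/k_{M_0+1}\to1$ forces the whole over-fitting term to vanish after dividing by $k_{M_0+1}$, uniformly in the weights; together with the controlled under-fitting terms this yields $R_n(\hat{\mathbf{w}})/R_n(\mathbf{w}_{M_0+1}^0)\to_p1$ and optimality.

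The main obstacle is the bookkeeping of the under-fitting contributions evaluated at the random $\hat{\mathbf{w}}$. The bias term $\|\sum_{m\leq M_0}\hat{w}_m(\mathbf{P}_{M_0+1}-\mathbf{P}_m)\bm{\mu}\|^2$ couples the random weights with the signal across all under-fitted models, and bounding it (together with the variance-reduction term) sharply enough to produce exactly the factor $(k_{M_0+1}-k_1)/(n\eta_n^2)$---rather than a cruder bound---requires combining the precise weight rates with Condition~\ref{cond:eigvalue}. In particular, for the MMA non-optimality claim one must verify that these under-fitting terms are genuinely $o_p$ of the leading over-fitting order, so that the stated condition on $\sum_{m>M_0+1}\hat{w}_m^2(k_m/k_{M_0+1}-1)$ alone drives the conclusion and cannot be cancelled by the negative variance-reduction term.
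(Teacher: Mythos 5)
Your proposal follows essentially the same route as the paper: your identity $R_n(\mathbf{w})-\sigma^2 k_{M_0+1}=\|\sum_{m\le M_0}w_m(\mathbf{P}_{M_0+1}-\mathbf{P}_m)\bm{\mu}\|^2-\sigma^2\sum_{j\le M_0+1}(k_j-k_{j-1})(1-W_j^2)+\sigma^2\sum_{j>M_0+1}(k_j-k_{j-1})W_j^2$ is algebraically identical to the decomposition $III_{n,1}-2III_{n,2}+III_{n,3}$ obtained from Lemma \ref{lem:lossrisk} (your cumulative weights $W_j$ just repackage the cross terms), the under-fitted weight mass is controlled by the same rate $O_p\{\phi_n(k_{M_0+1}-k_1)/(n\eta_n)\}$ from Lemma \ref{lem:what}(i), the lower and upper bounds on the over-fitting term in part (ii) are exactly those used in Appendix \ref{subsec:thm2}, and the reduction to Lemma \ref{lem:risk} is the same. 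Your closing concern about the negative variance-reduction term possibly cancelling the over-fitting term is handled exactly as you anticipate: $III_{n,2}/(\sigma^2 k_{M_0+1})=O_p\{\phi_n(k_{M_0+1}-k_1)^2/(n\eta_n k_{M_0+1})\}=o_p(1)$ under the stated conditions, so the over-fitting lower bound indeed drives the non-optimality conclusion.

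The one point you should tighten is in part (i). You state the over-fitted weight behaviour as $\sum_{m>M_0+1}\hat{w}_m\to_p 0$ and conclude that the over-fitting term vanishes. But part (i) imposes no relation between $k_{M_n}$ and $k_{M_0+1}$, and the best generic bound on the over-fitting contribution after dividing by $\sigma^2 k_{M_0+1}$ is $(k_{M_n}/k_{M_0+1}-1)\bigl(\sum_{m>M_0+1}\hat{w}_m\bigr)^2$; if $k_{M_n}/k_{M_0+1}$ is unbounded, a weight mass that is merely $o_p(1)$ does not make this vanish. What is actually needed, and what Lemma \ref{lem:what}(ii) (and the result of \citet{zhang2020PMA} you cite) provides, is the stronger statement $\Pr\bigl(\sum_{m>M_0+1}\hat{w}_m=0\bigr)\to 1$, on which event $III_{n,3}$ is exactly zero. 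With that substitution your argument closes.
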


Theorem \ref{thm:opt.risk}(i) indicates that when $\phi_n=\log n$, as long as the true model dimension does not diverge too fast, the least squares model averaging is asymptotically risk optimal. This is because the true model achieves the asymptotic risk optimality, and the model averaging method pushes all the weights to the true model. Comparing this result with Theorem \ref{thm:opt.loss}(i), a remarkable observation emerges: when $\phi_n=\log n$ and the true model dimension is fixed, the least squares model averaging is not asymptotically loss optimal, but it is asymptotically risk optimal.

When $\phi_n=2$ and the model dimensions are fixed, Theorem 2 of \citet{zhang.liu2019} established that $\{\hat{w}_m, m\geq M_0+1\}$ converges in distribution to a nondegenerate random weight vector. Combining this result with the fact that $\sum_{m>M_0+1} \hat{w}_m^2 (\frac{k_m}{k_{M_0+1}}-1)\geq (\frac{k_{M_0+2}}{k_{M_0+1}}-1) \sum_{m>M_0+1} \hat{w}_m^2$, it follows that $\sum_{m>M_0+1} \hat{w}_m^2 (\frac{k_m}{k_{M_0+1}}-1)$ does not converge to $0$ in probability. Therefore, the first part of Theorem \ref{thm:opt.risk}(ii) actually shows that MMA is not asymptotically risk optimal with fixed model dimensions since it puts weights on the over-fitted models with a positive probability.

The second part of Theorem \ref{thm:opt.risk} implies that when the true model dimension diverges, but not too fast, and $k_{M_n}/k_{M_0+1}\to 1$, MMA is asymptotically risk optimal. The condtion $k_{M_n}/k_{M_0+1}\to 1$ means that the over-fitted models do not significantly differ from the true model in the large sample sense. Therefore, in this case, MMA's asymptotic risk optimality holds because these over-fitted models can be considered negligible, even though MMA might assign non-negligible weights to them.

Last, we verify Lemma \ref{lem:risk} and Theorem \ref{thm:opt.risk} in the following examples.
\begin{continueexample}{exam:toy}
	Using Lemma \ref{lem:lossrisk} in the Appendix, we can rewrite $R_n(\mathbf{w})$ as $R_n(\mathbf{w})=R_n(\mathbf{w}_2^0)+w_1^2 (n\beta_2^2+\sigma^2)-2w_1 \sigma^2+w_3^2 \sigma^2$. Let $\mathbf{w}^R=(w_1^R,w_2^R,w_3^R)^{\top}=\argmin_{\mathbf{w}\in \mathcal{H}_n} R_n(\mathbf{w})$. It is easy to obtain $w_1^R=\sigma^2/(n\beta_2^2+\sigma^2)$, $w_2^R=1-w_1^R$, and $w_3^R=0$. Combining this with the fact that $R_n(\mathbf{w}_2^0)=2\sigma^2$, we have
	\begin{equation}\label{eq:exmp2}
		\frac{\inf_{\mathbf{w}\in \mathcal{H}_n} R_n(\mathbf{w})}{R_n(\mathbf{w}_2^0)}=\frac{R_n(\mathbf{w}^R)}{R_n(\mathbf{w}_2^0)}=1-\frac{\sigma^2}{2(n\beta_2^2+\sigma^2)}=1+O\left(\frac{1}{n}\right).
	\end{equation}
	This verifies the assertion in Lemma \ref{lem:risk}. Additionally, Lemma 2 of \citet{zhang2020PMA} established that if $\phi_n/n\to 0$, then $\hat{w}_1=O_p(\phi_n/n)$. This leads to
	\begin{equation}\label{eq:exmp3}
		\frac{R_n(\hat{\mathbf{w}})}{R_n(\mathbf{w}_2^0)}=1+O_p\left(\frac{\phi_n^2}{n}\right)+\frac{1}{2}\hat{w}_3^2.
	\end{equation}
	When $\phi_n=\log n$, Lemma 1 of \citet{zhang2020PMA} implied that $\Pr(\hat{w}_3=0)\to 1$. In the case of $\phi_n=2$, Theorem 2 of \citet{zhang.liu2019} established that $(\hat{w}_2, \hat{w}_3)^{\top}\to_d (\tilde{\lambda}_1, \tilde{\lambda}_2)^{\top}=\argmin_{\bm{\lambda}\in \mathcal{L}} \bm{\lambda}^{\top} \bm{\Lambda}\bm{\lambda}$, where $\mathcal{L}=\{\bm{\lambda}=(\lambda_1, \lambda_2)^{\top}\in [0, 1]^2\colon \lambda_1+\lambda_2=1\}$ and
	\begin{equation*}
		\bm{\Lambda}=\sigma^2\begin{pmatrix}
			4-\sum_{j=1}^2 Z_i^2 & 5-\sum_{j=1}^3 Z_i^2 \\
			5-\sum_{j=1}^3 Z_i^2 & 6-\sum_{j=1}^3 Z_i^2
		\end{pmatrix}.
	\end{equation*}
	Here, $Z_1, Z_2, Z_3\sim N(0, 1)$ are independent. A direct calculation yields $\tilde{\lambda}_2=\max\{1-1/Z_3^2, 0\}$. Combining these results with \eqref{eq:exmp2} and \eqref{eq:exmp3}, we conclude
	\begin{equation*}
		\frac{R_n(\hat{\mathbf{w}})}{\inf_{\mathbf{w}\in \mathcal{H}_n} R_n(\mathbf{w})}\begin{cases}
			=1+O_p(n^{-1} \log^2 n), & \mathrm{if}~\phi_n=\log n, \\
			\to_d 1+V^2/2, & \mathrm{if}~\phi_n=2,
		\end{cases}
	\end{equation*}
	where $V=\max\{1-1/\chi^2(1), 0\}$ and $\chi^2(1)$ is the chi-squared distribution with 1 degree of freedom. This confirms partial results of Theorem \ref{thm:opt.risk}.
\end{continueexample}

\subsection{Summary of the Results}\label{subsec:summ}

We summarize the main theoretical results in Table \ref{tab:summ}. One remarkable result is that when $\phi_n=\log n$ and the true model dimension is fixed, the asymptotic loss optimality does not hold, while the asymptotic risk optimality does. This occurs because when the true model dimension is fixed, the true model is not asymptotically loss optimal but is asymptotically risk optimal. This result does not contradict the corresponding finding of \citet{fang2023}, where they demonstrated the asymptotic loss optimality with a fixed true model dimension when $\phi_n=\log n$, since their result is confined to the special weight set $\mathcal{H}_n^{\delta}$.
\begin{table}[htpb]
	\begin{threeparttable}
		\centering
		\caption{Summary of the asymptotic loss and risk optimality for least squares model averaging when the true model is among the candidate models.}\label{tab:summ}
		\begin{tabular}{ccc}
			\hline
			$\phi_n$ & Asymptotic loss optimality & Asymptotic risk optimality \\
			\hline
			\multirow{2}{*}{$2$} & No (fixed model dimensions) & No (fixed model dimensions) \\
			& Yes (diverging case$^\dagger$, $k_{M_n}/k_{M_0+1}\to 1$) & Yes (diverging case$^\ddagger$, $k_{M_n}/k_{M_0+1}\to 1$) \\
			\hline
			\multirow{2}{*}{$\log n$} & No (fixed true model dimension)  & Yes (fixed true model dimension) \\
			& Yes (diverging case$^\dagger$) & Yes (diverging case$^\ddagger$) \\
			\hline
		\end{tabular}
		\begin{tablenotes}
			\small
			\item $^\dagger$: $k_{M_0+1}\to \infty$, $\phi_n^2(k_{M_0+1}-k_1)/(n\eta_n)\to 0$, and $M_0/(k_{M_0+1}\eta_n)\to 0$.
			\item $^\ddagger$: $k_{M_0+1}\to \infty$ and $\phi_n^2 (k_{M_0+1}-k_1)/(n\eta_n^2)\to 0$.
		\end{tablenotes}
	\end{threeparttable}
\end{table}

From Table \ref{tab:summ}, we can observe three cases where the asymptotic loss or risk optimality does not hold. These can be attributed to one or both of the following main reasons:
\begin{itemize}
	\item[(R1)] The true model is not asymptotically loss optimal.
	\item[(R2)] Least squares model averaging assigns weights to the over-fitted models with a positive probability.
\end{itemize}
Specifically, when model dimensions are fixed, MMA is not asymptotically loss optimal due to both Reasons (R1) and (R2), and it is also not asymptotically risk optimal because of Reason (R2). When $\phi_n=\log n$ and the true model dimension is fixed, the asymptotic loss optimality does not hold due to Reason (R1).

When $\phi_n=\log n$ and the true model dimension diverges, the least squares model averaging is asymptotically both loss and risk optimal under certain conditons on $M_0$, $k_m$, and $\eta_n$. This can be attributed to the asymptotic loss and risk optimality of the true model, along with the model averaging method assigning all the weights to the true model.

In the case where the true model dimension diverges, MMA is asymptotically both loss and risk optimal under certain conditons. This result is mainly attributed to the condition that $k_{M_n}/k_{M_0+1}\to 1$, rendering the over-fitted models asymptotically negligible, although MMA may assign weights to them with a positive probability.

Last, as mentioned in Remark \ref{rem:scenarios1}, the results from \citet{wan2010}, \citet{zhang2020PMA}, \citet{zhang2021}, and \citet{peng.opt} also indicate asymptotic optimality in cases where the true model dimension diverges. Here, we compare their conditions with ours, using the recent work by \citet{peng.opt} as an example. In the following discussion, we assume that $e_i$ is sub-Gaussian and Conditions \ref{cond:mu}--\ref{cond:eigvalue} are satisfied. Define
\begin{equation*}
	\psi(\mathbf{K})=\left[M_n \wedge \left\{1+\sum_{m=1}^{M_n-1} \frac{k_{m+1}-k_m}{4 k_m}+\sum_{m=1}^{M_0} \frac{\bm{\mu}^{\top} (\mathbf{P}_m-\mathbf{P}_{m-1})\bm{\mu}}{4 \bm{\mu}^{\top} (\mathbf{P}_{M_0+1}-\mathbf{P}_m)\bm{\mu}}\right\}\right] (1+\log M_n)^2,
\end{equation*}
where $\mathbf{P}_0=\mathbf{0}$ and $a\wedge b=\min(a, b)$ for any $a, b\in \mathbb{R}$. Under the conditions that $k_{M_0+1}\to \infty$ and $(k_{M_0+1}-k_1)/(n\eta_n)\to 0$, we can deduce from Lemma \ref{lem:risk} that $\xi_n/(\sigma^2 k_{M_0+1})\to 1$. Combining this with Corollary 1 of \citet{peng.opt}, we conclude that if $k_{M_0+1}\to \infty$, $(k_{M_0+1}-k_1)/(n\eta_n)\to 0$, $k_{M_n} E\{(\hat{\sigma}^2-\sigma^2)^2\}/k_{M_0+1}\to 0$, and $\psi(\mathbf{K})/k_{M_0+1}\to 0$, then MMA is asymptotically both loss and risk optimal. In comparison with the second part of Theorem \ref{thm:opt.loss}(ii) and the second part of Theorem \ref{thm:opt.risk}(ii), their condition $\psi(\mathbf{K})/k_{M_0+1}\to 0$ imposes a weaker restriction for the over-fitted models than our condition $k_{M_n}/k_{M_0+1}\to 1$. However, their condition $k_{M_n} E\{(\hat{\sigma}^2-\sigma^2)^2\}/k_{M_0+1}\to 0$ requires the consistency of $\hat{\sigma}^2$, which is not necessary for our results from Condition \ref{cond:sigma}.

\section{Simulation Studies}\label{sec:simu}

In this section, we conduct several simulation studies to illustrate the theoretical results presented in Section \ref{sec:mainres}. For each $i=1,\ldots,n$, we generate $y_i$ from the model \eqref{eq:model}, where the covariates $(x_{i1}, x_{i2}, \ldots)$ are i.i.d. normal random vectors with zero mean and the covariance matrix between the $(k, l)$th element being $\rho^{|k-l|}$, and the random errors $e_i$ are i.i.d. from $N(0, \sigma^2)$ and are independent of $x_{ij}$'s. The population R-squared is denoted as $R^2=\mathrm{Var}(\mu_i)/\mathrm{Var}(y_i)$, which is controlled in the range of $\{0.05, 0.1, 0.5, 0.9\}$ via the parameter $\sigma^2$. For each configuration, the simulation is repeated 10000 times.

We consider least squares model averaging methods with $\phi_n=2$ and $\phi_n=\log n$. To assess the results in Lemma \ref{lem:loss} and Theorem \ref{thm:opt.loss}, we calculate the averages of $L_n(\mathbf{w}_{M_0+1}^0)/\inf_{\mathbf{w}\in \mathcal{H}_n} L_n(\mathbf{w})$ and $L_n(\hat{\mathbf{w}})/\inf_{\mathbf{w}\in \mathcal{H}_n} L_n(\mathbf{w})$. Similarly, to evaluate the results in Lemma \ref{lem:risk} and Theorem \ref{thm:opt.risk}, we calculate the averages of $R_n(\mathbf{w}_{M_0+1}^0)/\inf_{\mathbf{w}\in \mathcal{H}_n} R_n(\mathbf{w})$ and $R_n(\hat{\mathbf{w}})/\inf_{\mathbf{w}\in \mathcal{H}_n} R_n(\mathbf{w})$. We illustrate the performance through the following three examples, each involving different settings for $\beta_j$.

\begin{exam}[Fixed true model dimension]\label{exam:fixdim}
	Set $\rho=0.5$, $(\beta_1,\ldots,\beta_5)=(1, -2, 3, 1.5, 4)$, and $\beta_j=0$ for $j\geq 6$. We consider $\mathbf{K}=(1,\ldots,11)^{\top}$. Then, $M_n=11$, $M_0=4$, $k_{M_0+1}=5$, and there are six over-fitted models. The sample size $n$ varies with 100, 1000, 10000, and 50000. Table \ref{tab:simu.exm1} presents the loss and risk ratios, which are summarized as follows.
	\begin{itemize}
		\item[(i)] For each $R^2$, the risk ratio for the true model converges to 1, while the loss ratio does not. This indicates that the asymptotic loss optimality of the true model does not hold, but its asymptotic risk optimality does, confirming Lemmas \ref{lem:loss}(i) and \ref{lem:risk}.
		\item[(ii)] For each $R^2$, the loss ratio for $\hat{\mathbf{w}}$ with $\phi_n=\log n$ is larger than 1.5 even for large sample sizes, and the risk ratio converges to 1. This means that the least squares model averaging with $\phi_n=\log n$ is not asymptotically loss optimal but is asymptotically risk optimal, confirming the first part of Theorem \ref{thm:opt.loss}(i) and Theorem \ref{thm:opt.risk}(i).
		\item[(iii)] For each $R^2$, the loss ratio for $\hat{\mathbf{w}}$ with $\phi_n=2$ is larger than 2 even for large sample sizes, and the risk ratio stabilizes near $1.08$. This implies that MMA is not neither asymptotically loss optimal nor asymptotically risk optimal, coinciding with the first part of Theorem \ref{thm:opt.loss}(ii) and the first part of Theorem \ref{thm:opt.risk}(ii).
	\end{itemize}
	
	Next, we verify the two inequalities \eqref{eq:infp} and \eqref{eq:infpw} using the same simulation settings as before, except for setting $\rho=0$. In this case, as mentioned below \eqref{eq:infp}, we have $\mathbf{Q}=\mathbf{I}_{k_{M_0+1}}$ and $(\mathbf{v}^{\top}\mathbf{Z})^2/\|\mathbf{Z}\|^2 \sim \mathrm{Beta}(1/2, 2)$. The upper panel of Figure \ref{fig:prob.ineq} illustrates the simulated value of $\Pr\{L_n(\mathbf{w}_{M_0+1}^0)/L_n(\mathbf{w}^L)\geq z\}$ from $z=1.01$ to 5 with an increment of 0.01, where the solid line is the function $2^{-1}\Pr\{\mathrm{Beta}(1/2, 2)\geq z\}$. For each $R^2$, it is clear that the solid line is consistently the lowest among the curves of the simulated value for different sample sizes, confirming \eqref{eq:infp}. Similarly, the lower panel of Figure \ref{fig:prob.ineq} displays the simulated value of $\Pr\{L_n(\hat{\mathbf{w}})/L_n(\mathbf{w}^L)\geq z\}$ for $z\in (1, 5]$, which verifies \eqref{eq:infpw}.
\end{exam}

\begin{exam}[Diverging true model dimension and $M_0/(k_{M_0+1}\eta_n)\to 0$]\label{exam:divdim1}
	Set $\rho=0.5$, $\beta_j=j^{-1}$ for $j=1,\ldots, p_n-1$, $\beta_{p_n}=1$, and $\beta_j=0$ for $j>p_n$, where $p_n=\lfloor 2n^{1/3}\rfloor$ and $\lfloor \cdot \rfloor$ returns the floor of $\cdot$. We consider $\mathbf{K}=(p_n-3,\ldots,p_n+5)^{\top}$. Then, $M_n=9$, $M_0=3$, $k_{M_0+1}=p_n$, and there are five over-fitted models. The sample size $n$ varies with 100, 1000, 10000, 50000, and 100000. It is easy to see that $\eta_n=1$, $M_0/(k_{M_0+1}\eta_n)\to 0$, $k_{M_n}/k_{M_0+1}\to 1$, and the remaining conditions of our results are satisfied. Table \ref{tab:simu.exm2} presents the loss and risk ratios. Our observations are as follows. First, the loss and risk ratios for the true model converge to 1, confirming Lemmas \ref{lem:loss}(ii) and \ref{lem:risk}. Second, the loss and risk ratios for MMA converge to 1. This indicates MMA's asymptotic loss and risk optimality for this example, verifying the second part of Theorem \ref{thm:opt.loss}(ii) and the second part of Theorem \ref{thm:opt.risk}(ii). Third, the loss and risk ratios for $\hat{\mathbf{w}}$ with $\phi_n=\log n$ converge to 1, which is consistent with the second part of Theorem \ref{thm:opt.loss}(i) and Theorem \ref{thm:opt.risk}(i).
\end{exam}

\begin{exam}[Diverging true model dimension and $M_0/(k_{M_0+1}\eta_n)\nrightarrow 0$]\label{exam:divdim2}
	Set $\rho=0.5$, $\beta_j=j^{-1}$ for $j=1,\ldots, p_n-1$, $\beta_{p_n}=5/\log\log n$, and $\beta_j=0$ for $j>p_n$, where $p_n=\lfloor \log n\rfloor$. We consider $\mathbf{K}=(1,\ldots,p_n+3)^{\top}$. Then, $M_n=p_n+3$, $M_0=p_n-1$, $k_{M_0+1}=p_n$, and there are three over-fitted models. The sample size $n$ varies with 100, 1000, 10000, 100000, and 500000. It can be easily verified that $\eta_n=(5/\log\log n)^2\to 0$, $k_{M_n}/k_{M_0+1}\to 1$, and the remaining conditions in Lemma \ref{lem:risk} and Theorem \ref{thm:opt.risk} are satisfied. The simulation results for the loss and risk ratios are presented in Table \ref{tab:simu.exm3}. First, the risk ratios for the true model and $\hat{\mathbf{w}}$ with $\phi_n=2$ and $\phi_n=\log n$ converge to 1, confirming Lemma \ref{lem:risk}, Theorem \ref{thm:opt.risk}(i), and the second part of Theorem \ref{thm:opt.risk}(ii). It is worth noting that the risk ratio for MMA appears to converge slowly to 1, which can be attributed to the very slow convergence rate of $k_{M_n}/k_{M_0+1}$. Second, it seems that the loss ratios for the true model and $\hat{\mathbf{w}}$ with $\phi_n=2$ and $\phi_n=\log n$ do not converge to 1, but this observation cannot be explained by the results in this paper. The reason is that the condition $M_0/(k_{M_0+1}\eta_n)\to 0$ in Lemma \ref{lem:loss}(ii) and Theorem \ref{thm:opt.loss} is not satisfied, and thus it is unknown whether the asymptotic loss optimality holds or not for this example.
\end{exam}

\section{Concluding Remarks}\label{sec:con}

In this article, we study the asymptotic optimality of least squares model averaging with the weight set $\mathcal{H}_n$ for nested candidate models in the scenario where the true model is included in the set of candidate models. In contrast to the existing work by \citet{fang2023}, our work is not confined to their special weight set $\mathcal{H}_n^{\delta}$, and we also explore the asymptotic risk optimality. A surprising finding is that when $\phi_n=\log n$ and the true model dimension is fixed, the asymptotic loss optimality does not hold. This result differs from the corresponding result of \citet{fang2023} and the asymptotic behavior of BIC. The reason behind this discrepancy is that the asymptotic loss optimality of the true model does not hold, while it holds for the restricted weight sets $\mathcal{H}_n(N)$ and $\mathcal{H}_n^{\delta}$.

In conclusion, we present several interesting directions for future research. First, the results in Lemma \ref{lem:loss}(i) and the first part of Theorem \ref{thm:opt.loss}(i) were derived under the assumption of a fixed true model dimension. It remains an open question whether these results still hold in the case of a diverging true model dimension. Therefore, an appealing direction to relax the conditions of the results in this paper. Second, it is of interest to investigate the asymptotic optimality of jackknife model averaging \citep{hansen2012} in the scenario of this paper under heteroskedastic errors. Third, our current results are confined to nested models. It is desirable to extend the analysis to include non-nested models, but it is quite challenging as previously mentioned by \cite{hansen2014}, \citet{zhang.liu2019}, and \citet{fang2023}. Last, it would be intriguing to explore similar issues in the context of other models, such as generalized linear models \citep{zhang2016jasa,yu.glm2024}, quantile regression models \citep{lu2015}, and time series models \citep{liao2021.joe}. These problems warrant further investigations.

\renewcommand{\theequation}{{A.\arabic{equation}}}
\renewcommand{\thelemma}{{\it A.\arabic{lemma}}}
\renewcommand{\thesubsection}{{\it A.\arabic{subsection}}}
\setcounter{equation}{0}
\setcounter{lemma}{0}

\section*{Appendix}

\subsection{Technical Lemmas}

\begin{lemma}\label{lem:lossrisk}
	For any $\mathbf{w}$ satisfying $\sum_{m=1}^{M_n} w_m=1$ (including both random and non-random $\mathbf{w}$’s), we have
	\begin{align*}
		L_n(\mathbf{w})&=L_n(\mathbf{w}_{M_0+1}^0)+\sum_{m=1}^{M_0} \left(w_m^2+2w_m\sum_{l=1}^{m-1} w_l\right) \mathbf{y}^{\top} (\mathbf{P}_{M_0+1}-\mathbf{P}_m)\mathbf{y} \\
		&\quad -2\sum_{m=1}^{M_0} w_m \mathbf{y}^{\top} (\mathbf{P}_{M_0+1}-\mathbf{P}_m)\mathbf{e}+\sum_{m>M_0+1} \left(w_m^2+2w_m \sum_{l=m+1}^{M_n} w_l\right) \mathbf{e}^{\top}(\mathbf{P}_m-\mathbf{P}_{M_0+1})\mathbf{e}
	\end{align*}
	and
	\begin{align*}
		R_n(\mathbf{w})&=R_n(\mathbf{w}_{M_0+1}^0)+\sum_{m=1}^{M_0} \left(w_m^2+2w_m\sum_{l=1}^{m-1} w_l\right) \left\{\bm{\mu}^{\top} (\mathbf{P}_{M_0+1}-\mathbf{P}_m)\bm{\mu}+\sigma^2 (k_{M_0+1}-k_m)\right\} \\
		&\quad -2\sigma^2 \sum_{m=1}^{M_0} w_m (k_{M_0+1}-k_m)+\sigma^2\sum_{m>M_0+1} \left(w_m^2+2w_m \sum_{l=m+1}^{M_n} w_l\right) (k_m-k_{M_0+1}).
	\end{align*}
\end{lemma}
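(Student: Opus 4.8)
The plan is to prove both identities by expanding the loss around the reference point $L_n(\mathbf{w}_{M_0+1}^0)$ and exploiting the orthogonality built into the nested design. I would begin by recording the three structural facts I use repeatedly: (a) for nested projections $\mathbf{P}_m\mathbf{P}_{m'}=\mathbf{P}_{\min(m,m')}$ and each $\mathbf{P}_m$ is symmetric idempotent; (b) because the $(M_0+1)$th model is the true model, $\bm{\mu}\in\mathrm{col}(\mathbf{X}_{M_0+1})$, so $\mathbf{P}_m\bm{\mu}=\bm{\mu}$ for every $m\geq M_0+1$; and (c) consequently $L_n(\mathbf{w}_{M_0+1}^0)=\|\bm{\mu}-\mathbf{P}_{M_0+1}\mathbf{y}\|^2=\mathbf{e}^{\top}\mathbf{P}_{M_0+1}\mathbf{e}$, since the bias vanishes at the true model.

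For the loss identity, write $\Delta=\mathbf{P}(\mathbf{w})-\mathbf{P}_{M_0+1}$ and note $\bm{\mu}-\hat{\bm{\mu}}(\mathbf{w})=-\mathbf{P}_{M_0+1}\mathbf{e}-\Delta\mathbf{y}$, so that
\begin{equation*}
	L_n(\mathbf{w})=L_n(\mathbf{w}_{M_0+1}^0)+2\mathbf{e}^{\top}\mathbf{P}_{M_0+1}\Delta\mathbf{y}+\mathbf{y}^{\top}\Delta^2\mathbf{y}.
\end{equation*}
The decisive step is to split $\Delta=-\mathbf{A}+\mathbf{B}$ with $\mathbf{A}=\sum_{m\leq M_0}w_m(\mathbf{P}_{M_0+1}-\mathbf{P}_m)$, supported on $\mathrm{col}(\mathbf{X}_{M_0+1})$, and $\mathbf{B}=\sum_{m>M_0+1}w_m(\mathbf{P}_m-\mathbf{P}_{M_0+1})$, supported on its orthogonal complement inside $\mathrm{col}(\mathbf{X}_{M_n})$, the $m=M_0+1$ term being zero. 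From (a)--(b) I would verify $\mathbf{A}\mathbf{B}=\mathbf{B}\mathbf{A}=\mathbf{0}$, $\mathbf{P}_{M_0+1}\mathbf{A}=\mathbf{A}$, $\mathbf{P}_{M_0+1}\mathbf{B}=\mathbf{0}$, and $\mathbf{B}\bm{\mu}=\mathbf{0}$. These collapse the cross term to $-2\mathbf{y}^{\top}\mathbf{A}\mathbf{e}$ and the quadratic term to $\mathbf{y}^{\top}\mathbf{A}^2\mathbf{y}+\mathbf{e}^{\top}\mathbf{B}^2\mathbf{e}$, where $\mathbf{B}\bm{\mu}=\mathbf{0}$ turns $\mathbf{y}$ into $\mathbf{e}$ in the last term.

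It then remains to evaluate $\mathbf{A}^2$ and $\mathbf{B}^2$. Using $(\mathbf{P}_{M_0+1}-\mathbf{P}_m)(\mathbf{P}_{M_0+1}-\mathbf{P}_{m'})=\mathbf{P}_{M_0+1}-\mathbf{P}_{\max(m,m')}$ and the analogous $(\mathbf{P}_m-\mathbf{P}_{M_0+1})(\mathbf{P}_{m'}-\mathbf{P}_{M_0+1})=\mathbf{P}_{\min(m,m')}-\mathbf{P}_{M_0+1}$, I would regroup the resulting double sums by the value of $\max(m,m')$ (resp.\ $\min$), which produces the coefficient $\sum_{\max(m,m')=m}w_mw_{m'}=w_m^2+2w_m\sum_{l<m}w_l$ (resp.\ $w_m^2+2w_m\sum_{l>m}w_l$). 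Substituting these gives exactly the stated loss formula. The risk formula then follows by taking the conditional expectation given $\mathbf{x}_1,\ldots,\mathbf{x}_n$ term by term, using $E(\mathbf{y}^{\top}\mathbf{M}\mathbf{y})=\bm{\mu}^{\top}\mathbf{M}\bm{\mu}+\sigma^2\mathrm{tr}(\mathbf{M})$ and $E(\mathbf{y}^{\top}\mathbf{M}\mathbf{e})=\sigma^2\mathrm{tr}(\mathbf{M})$, together with $\mathrm{tr}(\mathbf{P}_{M_0+1}-\mathbf{P}_m)=k_{M_0+1}-k_m$; for random $\mathbf{w}$ the identity is read as an equality of deterministic functions evaluated at the (fixed, inside the conditional expectation) argument $\mathbf{w}$.

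I expect the main obstacle to be the bookkeeping in the splitting step---establishing the four orthogonality relations for $\mathbf{A}$ and $\mathbf{B}$ and then recovering the precise weight coefficients from the double sums---rather than any analytic difficulty; the remainder is routine projection algebra and second-moment computation. A secondary point requiring care is the clean separation of the under-fitted ($m\leq M_0$), true ($m=M_0+1$), and over-fitted ($m>M_0+1$) blocks, since it is exactly the vanishing of the bias on the over-fitted block, $\mathbf{B}\bm{\mu}=\mathbf{0}$, that accounts for the appearance of $\mathbf{e}^{\top}(\mathbf{P}_m-\mathbf{P}_{M_0+1})\mathbf{e}$ rather than a $\mathbf{y}$-quadratic in the final sum.
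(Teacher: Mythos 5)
Your proposal is correct and follows essentially the same route as the paper's proof: both use $\sum_m w_m=1$ to absorb the $(M_0+1)$th weight, the nested-projection identities $(\mathbf{P}_{M_0+1}-\mathbf{P}_m)(\mathbf{P}_{M_0+1}-\mathbf{P}_l)=\mathbf{P}_{M_0+1}-\mathbf{P}_{\max\{m,l\}}$, $(\mathbf{P}_m-\mathbf{P}_{M_0+1})(\mathbf{P}_l-\mathbf{P}_{M_0+1})=\mathbf{P}_{\min\{m,l\}}-\mathbf{P}_{M_0+1}$, and the cross-orthogonality, together with $\mathbf{P}_m\bm{\mu}=\bm{\mu}$ for $m>M_0$, then regroup the double sums and take conditional expectation for the risk. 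Your packaging via the operators $\mathbf{A}$ and $\mathbf{B}$ is only a cosmetic variant of the paper's direct vector decomposition.
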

\begin{proof}
	For any $\mathbf{w}$ satisfying $\sum_{m=1}^{M_n} w_m=1$, we have
	\begin{align*}
		L_n(\mathbf{w})&=\left\|\bm{\mu}-\sum_{m=1}^{M_n} w_m \mathbf{P}_m\mathbf{y}\right\|^2 \\
		&=\left\|\bm{\mu}-\sum_{m=1}^{M_0} w_m \mathbf{P}_m\mathbf{y}-w_{M_0+1}\mathbf{P}_{M_0+1}\mathbf{y}-\sum_{m>M_0+1} w_m \mathbf{P}_m\mathbf{y}\right\|^2 \\
		&=\left\|\bm{\mu}-\mathbf{P}_{M_0+1}\mathbf{y}+\sum_{m=1}^{M_0} w_m (\mathbf{P}_{M_0+1}-\mathbf{P}_m)\mathbf{y}-\sum_{m>M_0+1} w_m (\mathbf{P}_m-\mathbf{P}_{M_0+1})\mathbf{y}\right\|^2 \\
		&=\left\|-\mathbf{P}_{M_0+1}\mathbf{e}+\sum_{m=1}^{M_0} w_m (\mathbf{P}_{M_0+1}-\mathbf{P}_m)\mathbf{y}-\sum_{m>M_0+1} w_m (\mathbf{P}_m-\mathbf{P}_{M_0+1})\mathbf{e}\right\|^2,
	\end{align*}
	where the third equality is derived from $w_{M_0+1}=1-\sum_{m=1}^{M_0} w_m-\sum_{m>M_0+1} w_m$ and the last equality is due to the fact $\mathbf{P}_m \bm{\mu}=\bm{\mu}$ for $m> M_0$. Using the fact $\mathbf{P}_m\mathbf{P}_l=\mathbf{P}_{\min\{m, l\}}$ in the nested model setting, it can be easily verified that
	\begin{equation}\label{eq:basic}
		\left.
	\begin{aligned}
		&(\mathbf{P}_{M_0+1}-\mathbf{P}_m)(\mathbf{P}_{M_0+1}-\mathbf{P}_l)=\mathbf{P}_{M_0+1}-\mathbf{P}_{\max\{m, l\}},& &\text{if}~1\leq m, l\leq M_0, \\
		&(\mathbf{P}_m-\mathbf{P}_{M_0+1})(\mathbf{P}_l-\mathbf{P}_{M_0+1})=\mathbf{P}_{\min\{m,l\}}-\mathbf{P}_{M_0+1},& &\text{if}~m, l>M_0, \\
		&(\mathbf{P}_{M_0+1}-\mathbf{P}_m)(\mathbf{P}_l-\mathbf{P}_{M_0+1})=\mathbf{0},& &\text{if}~m\leq M_0<l.
	\end{aligned}\right\}
\end{equation}
	Therefore, we have
	\begin{align*}
		L_n(\mathbf{w})&=\mathbf{e}^{\top}\mathbf{P}_{M_0+1}\mathbf{e}+\left\|\sum_{m=1}^{M_0} w_m (\mathbf{P}_{M_0+1}-\mathbf{P}_m)\mathbf{y}\right\|^2-2\sum_{m=1}^{M_0} w_m \mathbf{y}^{\top}(\mathbf{P}_{M_0+1}-\mathbf{P}_m)\mathbf{e} \\
		&\quad +\left\|\sum_{m>M_0+1} w_m (\mathbf{P}_m-\mathbf{P}_{M_0+1})\mathbf{e}\right\|^2 \\
		&=L_n(\mathbf{w}_{M_0+1}^0)+\sum_{m=1}^{M_0}\sum_{l=1}^{M_0} w_mw_l \mathbf{y}^{\top}(\mathbf{P}_{M_0+1}-\mathbf{P}_{\max\{m,l\}})\mathbf{y}-2\sum_{m=1}^{M_0} w_m \mathbf{y}^{\top}(\mathbf{P}_{M_0+1}-\mathbf{P}_m)\mathbf{e} \\
		&\quad +\sum_{m>M_0+1}\sum_{l>M_0+1} w_mw_l \mathbf{e}^{\top}(\mathbf{P}_{\min\{m,l\}}-\mathbf{P}_{M_0+1})\mathbf{e},
	\end{align*}
	where the last equality is derived by $L_n(\mathbf{w}_{M_0+1}^0)=\mathbf{e}^{\top}\mathbf{P}_{M_0+1}\mathbf{e}$ and \eqref{eq:basic}. This completes the proof of the first part of Lemma \ref{lem:lossrisk}. Taking expectation over $\mathbf{e}$ of the both sides of the expression of $L_n(\mathbf{w})$, we can obtain the second part of Lemma \ref{lem:lossrisk}.
\end{proof}

\begin{lemma}\label{lem:mumu}
	If Condition \ref{cond:eigvalue} holds, then $\bm{\mu}^{\top} (\mathbf{P}_{M_0+1}-\mathbf{P}_m)\bm{\mu}\geq \kappa_0 n\sum_{j\in \mathcal{A}_{M_0+1}\cap \mathcal{A}_m^c} \beta_j^2$ for any $m\in \{1,\ldots,M_0\}$.
\end{lemma}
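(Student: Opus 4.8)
The plan is to recognize the quadratic form $\bm{\mu}^{\top}(\mathbf{P}_{M_0+1}-\mathbf{P}_m)\bm{\mu}$ as the squared length of a projection residual and then bound it below using the minimum-eigenvalue Condition \ref{cond:eigvalue}. First I would use the fact that $\beta_j=0$ for $j>k_{M_0+1}$ to write $\bm{\mu}=\mathbf{X}_{M_0+1}\bm{\beta}$ with $\bm{\beta}=(\beta_1,\ldots,\beta_{k_{M_0+1}})^{\top}$, so that $\bm{\mu}$ lies in the column space of $\mathbf{X}_{M_0+1}$ and $\mathbf{P}_{M_0+1}\bm{\mu}=\bm{\mu}$. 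Since $\mathbf{P}_m$ is symmetric and idempotent, this gives $\bm{\mu}^{\top}(\mathbf{P}_{M_0+1}-\mathbf{P}_m)\bm{\mu}=\|\bm{\mu}\|^2-\|\mathbf{P}_m\bm{\mu}\|^2=\|(\mathbf{I}_n-\mathbf{P}_m)\bm{\mu}\|^2$.

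Next I would partition the design according to the index split at $k_m$: write $\mathbf{X}_{M_0+1}=[\mathbf{X}_m,\mathbf{X}_{(m)}]$, where $\mathbf{X}_{(m)}$ collects the columns indexed by $\mathcal{A}_{M_0+1}\cap\mathcal{A}_m^c=\{k_m+1,\ldots,k_{M_0+1}\}$, and partition $\bm{\beta}=(\bm{\beta}_{(1)}^{\top},\bm{\beta}_{(2)}^{\top})^{\top}$ accordingly, so that $\bm{\beta}_{(2)}=(\beta_{k_m+1},\ldots,\beta_{k_{M_0+1}})^{\top}$. Because $(\mathbf{I}_n-\mathbf{P}_m)$ annihilates the column space of $\mathbf{X}_m$, the contribution of $\mathbf{X}_m\bm{\beta}_{(1)}$ drops out and $\|(\mathbf{I}_n-\mathbf{P}_m)\bm{\mu}\|^2=\|(\mathbf{I}_n-\mathbf{P}_m)\mathbf{X}_{(m)}\bm{\beta}_{(2)}\|^2$. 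The key step is then to invoke the variational characterization of the orthogonal projection onto $\mathrm{col}(\mathbf{X}_m)$, namely $\|(\mathbf{I}_n-\mathbf{P}_m)\mathbf{X}_{(m)}\bm{\beta}_{(2)}\|^2=\min_{\bm{\gamma}_1}\|\mathbf{X}_m\bm{\gamma}_1+\mathbf{X}_{(m)}\bm{\beta}_{(2)}\|^2$, where the minimum runs over all $\bm{\gamma}_1\in\mathbb{R}^{k_m}$.

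Finally I would bound the minimand uniformly. Each competitor equals $\mathbf{X}_{M_0+1}\bm{\gamma}$ with $\bm{\gamma}=(\bm{\gamma}_1^{\top},\bm{\beta}_{(2)}^{\top})^{\top}$, so Condition \ref{cond:eigvalue} yields $\|\mathbf{X}_{M_0+1}\bm{\gamma}\|^2\geq\lambda_{\min}(\mathbf{X}_{M_0+1}^{\top}\mathbf{X}_{M_0+1})\|\bm{\gamma}\|^2\geq\kappa_0 n\,(\|\bm{\gamma}_1\|^2+\|\bm{\beta}_{(2)}\|^2)\geq\kappa_0 n\,\|\bm{\beta}_{(2)}\|^2$. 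Since this lower bound is independent of $\bm{\gamma}_1$, it survives the minimization, and $\|\bm{\beta}_{(2)}\|^2=\sum_{j\in\mathcal{A}_{M_0+1}\cap\mathcal{A}_m^c}\beta_j^2$ delivers the claim. I do not expect a serious obstacle here; the only point requiring care is the reduction to the constrained minimization together with the observation that discarding the nonnegative term $\|\bm{\gamma}_1\|^2$ produces a bound uniform in $\bm{\gamma}_1$ that therefore passes to the infimum. An equivalent route avoids the minimization by identifying $\mathbf{X}_{(m)}^{\top}(\mathbf{I}_n-\mathbf{P}_m)\mathbf{X}_{(m)}$ as the Schur complement of $\mathbf{X}_m^{\top}\mathbf{X}_m$ in $\mathbf{X}_{M_0+1}^{\top}\mathbf{X}_{M_0+1}$ and using the inequality $\lambda_{\min}$ of a Schur complement is at least $\lambda_{\min}$ of the full Gram matrix, but the variational argument is more self-contained and I would present that one.
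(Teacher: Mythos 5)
Your proof is correct and is essentially the paper's own argument in a slightly different dress: both reduce $\bm{\mu}^{\top}(\mathbf{P}_{M_0+1}-\mathbf{P}_m)\bm{\mu}$ to the residual quadratic form in $(\beta_{k_m+1},\ldots,\beta_{k_{M_0+1}})^{\top}$ and then obtain the $\kappa_0 n$ bound by comparing against $\lambda_{\min}(\mathbf{X}_{M_0+1}^{\top}\mathbf{X}_{M_0+1})$ after discarding the nonnegative contribution of the first block of coordinates. The paper packages this last step as the eigenvalue inequality for the Schur complement (the alternative route you mention), whereas your uniform lower bound over $\bm{\gamma}_1$ in the variational characterization of the projection residual is the same inequality read in the dual direction; no gap either way.
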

\begin{proof}
	Let $\mathbf{X}_m^c$ be the $n\times (k_{M_0+1}-k_m)$ design matrix that consists of the covariates $x_{ij}, j\in \mathcal{A}_{M_0+1}\cap \mathcal{A}_m^c$. Let $\bm{\beta}_m=(\beta_1,\ldots,\beta_{k_m})^{\top}$ and $\bm{\beta}_m^c=(\beta_{k_m+1},\ldots, \beta_{k_{M_0+1}})^{\top}$. Then $\mathbf{X}_{M_0+1}=(\mathbf{X}_m, \mathbf{X}_m^c)$, $\bm{\beta}_{M_0+1}=(\bm{\beta}_m^{\top}, \bm{\beta}_m^{c\top})^{\top}$, and $\bm{\mu}=\mathbf{X}_{M_0+1} \bm{\beta}_{M_0+1}=\mathbf{X}_m \bm{\beta}_m+\mathbf{X}_m^c \bm{\beta}_m^c$. It follows that
	\begin{equation}\label{eq:mu.m}
		\bm{\mu}^{\top} (\mathbf{P}_{M_0+1}-\mathbf{P}_m)\bm{\mu}=\bm{\mu}^{\top} (\mathbf{I}_n-\mathbf{P}_m)\bm{\mu}=\bm{\beta}_m^{c\top} (\mathbf{X}_m^{c\top}\mathbf{X}_m^c-\mathbf{X}_m^{c\top}\mathbf{P}_m\mathbf{X}_m^c) \bm{\beta}_m^c.
	\end{equation}
	Observe that
	\begin{equation}\label{eq:block}
		\mathbf{X}_{M_0+1}^{\top}\mathbf{X}_{M_0+1}=\begin{pmatrix}
			\mathbf{X}_m^{\top}\mathbf{X}_m & \mathbf{X}_m^{\top}\mathbf{X}_m^c \\
			\mathbf{X}_m^{c\top}\mathbf{X}_m & \mathbf{X}_m^{c\top}\mathbf{X}_m^c
		\end{pmatrix}=\mathbf{\Gamma}^{\top} \begin{pmatrix}
			\mathbf{X}_m^{\top}\mathbf{X}_m & \mathbf{0} \\
			\mathbf{0} & \mathbf{X}_m^{c\top}\mathbf{X}_m^c-\mathbf{X}_m^{c\top}\mathbf{P}_m\mathbf{X}_m^c
		\end{pmatrix}
		\mathbf{\Gamma},
	\end{equation}
	where
	\begin{equation*}
		\mathbf{\Gamma}=\begin{pmatrix}
			\mathbf{I}_{k_m} & (\mathbf{X}_m^{\top}\mathbf{X}_m)^{-1} \mathbf{X}_m^{\top}\mathbf{X}_m^c \\
			\mathbf{0} & \mathbf{I}_{k_{M_0+1}-k_m}
		\end{pmatrix}.
	\end{equation*}
	Therefore, $\mathbf{X}_m^{c\top}\mathbf{X}_m^c-\mathbf{X}_m^{c\top}\mathbf{P}_m\mathbf{X}_m^c$ is the Schur complement of the block $\mathbf{X}_m^{\top}\mathbf{X}_m$ in the matrix $\mathbf{X}_{M_0+1}^{\top}\mathbf{X}_{M_0+1}$. Let $\mathbf{z}=(\mathbf{z}_1^{\top}, \mathbf{z}_2^{\top})^{\top}$ be an $k_{M_0+1}\times 1$ vector, where $\mathbf{z}_1\in \mathbb{R}^{k_m}$ and $\mathbf{z}_2\in \mathbb{R}^{k_{M_0+1}-k_m}$. Denote $\mathcal{S}=\{\mathbf{z}\colon \mathbf{z}_1+(\mathbf{X}_m^{\top}\mathbf{X}_m)^{-1} \mathbf{X}_m^{\top}\mathbf{X}_m^c \mathbf{z}_2=\mathbf{0}, \mathbf{z}_2\neq \mathbf{0}\}$. Then using \eqref{eq:block}, we know $\mathbf{z}^{\top}(\mathbf{X}_{M_0+1}^{\top}\mathbf{X}_{M_0+1})\mathbf{z}=\mathbf{z}_2^{\top}(\mathbf{X}_m^{c\top}\mathbf{X}_m^c-\mathbf{X}_m^{c\top}\mathbf{P}_m\mathbf{X}_m^c)\mathbf{z}_2$ for any $\mathbf{z}\in \mathcal{S}$. It follows that
	\begin{align*}
		\lambda_{\min}(\mathbf{X}_{M_0+1}^{\top}\mathbf{X}_{M_0+1})&=\min_{\mathbf{z}\neq \mathbf{0}} \frac{\mathbf{z}^{\top}(\mathbf{X}_{M_0+1}^{\top}\mathbf{X}_{M_0+1})\mathbf{z}}{\mathbf{z}^{\top}\mathbf{z}} \\
		&\leq \min_{\mathbf{z}\in \mathcal{S}} \frac{\mathbf{z}^{\top}(\mathbf{X}_{M_0+1}^{\top}\mathbf{X}_{M_0+1})\mathbf{z}}{\mathbf{z}^{\top}\mathbf{z}} \\
		&=\min_{\mathbf{z}\in \mathcal{S}} \frac{\mathbf{z}_2^{\top}(\mathbf{X}_m^{c\top}\mathbf{X}_m^c-\mathbf{X}_m^{c\top}\mathbf{P}_m\mathbf{X}_m^c)\mathbf{z}_2}{\mathbf{z}_1^{\top}\mathbf{z}_1+\mathbf{z}_2^{\top}\mathbf{z}_2} \\
		&\leq \min_{\mathbf{z}_2\neq \mathbf{0}} \frac{\mathbf{z}_2^{\top}(\mathbf{X}_m^{c\top}\mathbf{X}_m^c-\mathbf{X}_m^{c\top}\mathbf{P}_m\mathbf{X}_m^c)\mathbf{z}_2}{\mathbf{z}_2^{\top}\mathbf{z}_2} \\
		&=\lambda_{\min}(\mathbf{X}_m^{c\top}\mathbf{X}_m^c-\mathbf{X}_m^{c\top}\mathbf{P}_m\mathbf{X}_m^c),
	\end{align*}
	which together with \eqref{eq:mu.m} and Condition \ref{cond:eigvalue}, yields that
	\begin{align*}
		\bm{\mu}^{\top} (\mathbf{P}_{M_0+1}-\mathbf{P}_m)\bm{\mu}&\geq \lambda_{\min}(\mathbf{X}_m^{c\top}\mathbf{X}_m^c-\mathbf{X}_m^{c\top}\mathbf{P}_m\mathbf{X}_m^c) \|\bm{\beta}_m^c\|^2 \\
		&\geq \lambda_{\min}(\mathbf{X}_{M_0+1}^{\top}\mathbf{X}_{M_0+1}) \|\bm{\beta}_m^c\|^2 \geq \kappa_0 n\|\bm{\beta}_m^c\|^2.
	\end{align*}
	This completes the proof of Lemma \ref{lem:mumu}.
\end{proof}

The following lemma establishes asymptotic distributions of some quantities if $k_{M_0+1}$ is fixed. We first introduce some notations. We partition $\mathbf{Q}$ given in Condition \ref{cond:xcong} as
\begin{equation*}
	\mathbf{Q}=\begin{pmatrix}
		\mathbf{Q}_{11} & \mathbf{Q}_{12} \\
		\mathbf{Q}_{21} & \mathbf{Q}_{22}
	\end{pmatrix},
\end{equation*}
where $\mathbf{Q}_{11}$ is an $k_{M_0}\times k_{M_0}$ submatrix, and $\mathbf{Q}_{12}^{\top}=\mathbf{Q}_{21}$ and $\mathbf{Q}_{22}$ are conformable with it for partitioning. Let
\begin{equation}\label{eq:unit}
	\mathbf{v}=\mathbf{Q}^{-1/2}
	\begin{pmatrix}
		\mathbf{0}_{k_{M_0}\times 1} \\
		(\mathbf{Q}_{22}-\mathbf{Q}_{21}\mathbf{Q}_{11}^{-1}\mathbf{Q}_{12}) \bm{\beta}_{M_0}^c
	\end{pmatrix} \big/ \left\{\bm{\beta}_{M_0}^{c\top}	(\mathbf{Q}_{22}-\mathbf{Q}_{21}\mathbf{Q}_{11}^{-1}\mathbf{Q}_{12}) \bm{\beta}_{M_0}^c\right\}^{1/2},
\end{equation}
where $\bm{\beta}_{M_0}^c=(\beta_j, j\in \mathcal{A}_{M_0+1}\cap \mathcal{A}_{M_0}^c)^{\top}$. Note that $\|\mathbf{v}\|=1$. When $\mathbf{Q}=\sigma_X^2 \mathbf{I}_{k_{M_0+1}}$ with some $\sigma_X^2>0$, it can be easily verified that $\mathbf{v}=(\mathbf{0}_{k_{M_0}\times 1}^{\top}, \bm{\beta}_{M_0}^{c\top}/\|\bm{\beta}_{M_0}^c\|)^{\top}$.

\begin{lemma}\label{lem:convg}
	If $k_{M_0+1}$ is fixed and Condition \ref{cond:xcong} is satisfied, then \begin{equation}\label{eq:cong1}
		\mathbf{e}^{\top}\mathbf{P}_{M_0+1}\mathbf{e}\to_d \sigma^2 \mathbf{Z}^{\top}\mathbf{Z}\sim \sigma^2\chi^2_{k_{M_0+1}},
	\end{equation}
	\begin{equation}\label{eq:cong2}
		\frac{\bm{\mu}^{\top}(\mathbf{P}_{M_0+1}-\mathbf{P}_{M_0})\mathbf{e}}{\sqrt{\bm{\mu}^{\top}(\mathbf{P}_{M_0+1}-\mathbf{P}_{M_0})\bm{\mu}}}\to_d \sigma \mathbf{v}^{\top}\mathbf{Z}\sim \sigma N(0,1),
	\end{equation}
	and
	\begin{equation}\label{eq:cong3}
		\frac{\{\bm{\mu}^{\top}(\mathbf{P}_{M_0+1}-\mathbf{P}_{M_0})\mathbf{e}\}^2}{\mathbf{e}^{\top}\mathbf{P}_{M_0+1}\mathbf{e}\{\bm{\mu}^{\top} (\mathbf{P}_{M_0+1}-\mathbf{P}_{M_0})\bm{\mu}\}}\to_d \frac{(\mathbf{v}^{\top}\mathbf{Z})^2}{\mathbf{Z}^{\top}\mathbf{Z}},
	\end{equation}
	where $\mathbf{Z}\sim N(\mathbf{0},\mathbf{I}_{k_{M_0+1}})$.
\end{lemma}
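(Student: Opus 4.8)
The plan is to reduce all three statements to the single random vector $\mathbf{u}_n = n^{-1/2}\mathbf{X}_{M_0+1}^{\top}\mathbf{e}$. By Condition \ref{cond:xcong}, $\mathbf{u}_n\to_d\mathbf{g}=\sigma\mathbf{Q}^{1/2}\mathbf{Z}$ with $\mathbf{Z}\sim N(\mathbf{0},\mathbf{I}_{k_{M_0+1}})$, while $n^{-1}\mathbf{X}_{M_0+1}^{\top}\mathbf{X}_{M_0+1}$ and all its sub-blocks converge to the (invertible, since $\mathbf{Q}\succ 0$) blocks of $\mathbf{Q}$. Because $k_{M_0+1}$ is fixed, every quantity in \eqref{eq:cong1}--\eqref{eq:cong3} is a continuous function of $\mathbf{u}_n$ together with deterministic matrices converging to constants, so the continuous mapping theorem and Slutsky's theorem will carry the argument once the algebra is arranged. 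Writing $\mathbf{u}_n=(\mathbf{u}_{n,1}^{\top},\mathbf{u}_{n,2}^{\top})^{\top}$ and $\mathbf{g}=(\mathbf{g}_1^{\top},\mathbf{g}_2^{\top})^{\top}$ conformably with the partition of $\mathbf{Q}$, I will keep all three limits expressed through the \emph{same} $\mathbf{g}$ (hence the same $\mathbf{Z}$), which is what makes \eqref{eq:cong3} work.

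For \eqref{eq:cong1} I would write $\mathbf{e}^{\top}\mathbf{P}_{M_0+1}\mathbf{e} = \mathbf{u}_n^{\top}(n^{-1}\mathbf{X}_{M_0+1}^{\top}\mathbf{X}_{M_0+1})^{-1}\mathbf{u}_n$, whose limit is $\mathbf{g}^{\top}\mathbf{Q}^{-1}\mathbf{g}=\sigma^2\mathbf{Z}^{\top}\mathbf{Z}\sim\sigma^2\chi^2_{k_{M_0+1}}$. For \eqref{eq:cong2} the first step is the identity $\mathbf{X}_{M_0+1}^{\top}(\mathbf{P}_{M_0+1}-\mathbf{P}_{M_0}) = \mathbf{X}_{M_0+1}^{\top}(\mathbf{I}_n-\mathbf{P}_{M_0})$; since the $\mathbf{X}_{M_0}$-block is annihilated by $\mathbf{I}_n-\mathbf{P}_{M_0}$, using $\bm{\mu}=\mathbf{X}_{M_0+1}\bm{\beta}_{M_0+1}$ gives $\bm{\mu}^{\top}(\mathbf{P}_{M_0+1}-\mathbf{P}_{M_0})\mathbf{e} = \bm{\beta}_{M_0}^{c\top}\mathbf{X}_{M_0}^{c\top}(\mathbf{I}_n-\mathbf{P}_{M_0})\mathbf{e} = \sqrt{n}\,\bm{\beta}_{M_0}^{c\top}\mathbf{a}_n$, where $\mathbf{a}_n = \mathbf{u}_{n,2} - (n^{-1}\mathbf{X}_{M_0}^{c\top}\mathbf{X}_{M_0})(n^{-1}\mathbf{X}_{M_0}^{\top}\mathbf{X}_{M_0})^{-1}\mathbf{u}_{n,1}$ partials the first block out of the second. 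Then $\mathbf{a}_n\to_d\mathbf{h}=\mathbf{g}_2-\mathbf{Q}_{21}\mathbf{Q}_{11}^{-1}\mathbf{g}_1\sim N(\mathbf{0},\sigma^2\mathbf{S})$ with $\mathbf{S}=\mathbf{Q}_{22}-\mathbf{Q}_{21}\mathbf{Q}_{11}^{-1}\mathbf{Q}_{12}$, and since $n^{-1}(\mathbf{X}_{M_0}^{c\top}\mathbf{X}_{M_0}^c-\mathbf{X}_{M_0}^{c\top}\mathbf{P}_{M_0}\mathbf{X}_{M_0}^c)\to\mathbf{S}$ (this is the Schur complement appearing via Lemma \ref{lem:mumu} with $m=M_0$), the normalized statistic converges to $\bm{\beta}_{M_0}^{c\top}\mathbf{h}/\{\bm{\beta}_{M_0}^{c\top}\mathbf{S}\bm{\beta}_{M_0}^c\}^{1/2}\sim N(0,\sigma^2)$.

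The crux is identifying this Gaussian with $\sigma\mathbf{v}^{\top}\mathbf{Z}$. Writing $\mathbf{h}=\mathbf{B}\mathbf{g}$ with $\mathbf{B}=(-\mathbf{Q}_{21}\mathbf{Q}_{11}^{-1},\,\mathbf{I})$ and $\mathbf{g}=\sigma\mathbf{Q}^{1/2}\mathbf{Z}$, the claim reduces to the purely algebraic identity $\bm{\beta}_{M_0}^{c\top}\mathbf{B}\mathbf{Q} = (\mathbf{0}^{\top},\,(\mathbf{S}\bm{\beta}_{M_0}^c)^{\top})$, which I verify block by block: the first block cancels because $-\mathbf{Q}_{21}\mathbf{Q}_{11}^{-1}\mathbf{Q}_{11}+\mathbf{Q}_{21}=\mathbf{0}$, and the second collapses to $\mathbf{S}\bm{\beta}_{M_0}^c$ by definition of $\mathbf{S}$. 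Post-multiplying by $\mathbf{Q}^{-1/2}$ and dividing by $\{\bm{\beta}_{M_0}^{c\top}\mathbf{S}\bm{\beta}_{M_0}^c\}^{1/2}$ then reproduces exactly the vector $\mathbf{v}^{\top}$ of \eqref{eq:unit}, yielding \eqref{eq:cong2}.

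Finally, \eqref{eq:cong3} needs no new input: its left-hand side is the ratio of the squared statistic in \eqref{eq:cong2} to $\mathbf{e}^{\top}\mathbf{P}_{M_0+1}\mathbf{e}$, both continuous functions of the same $\mathbf{u}_n$, so the joint convergence $(\text{numerator},\,\text{denominator})\to_d(\sigma^2(\mathbf{v}^{\top}\mathbf{Z})^2,\,\sigma^2\mathbf{Z}^{\top}\mathbf{Z})$ is automatic, and the continuous mapping theorem delivers the limit $(\mathbf{v}^{\top}\mathbf{Z})^2/(\mathbf{Z}^{\top}\mathbf{Z})$, the ratio map being almost surely continuous at the limit since $\mathbf{Z}^{\top}\mathbf{Z}>0$ with probability one. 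I expect the only genuinely delicate points to be the Schur-complement identity in \eqref{eq:cong2} and the bookkeeping that keeps the numerator of \eqref{eq:cong3} and $\mathbf{e}^{\top}\mathbf{P}_{M_0+1}\mathbf{e}$ driven by one common Gaussian; the remaining steps are routine Slutsky arguments.
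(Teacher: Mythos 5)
Your proposal is correct and follows essentially the same route as the paper: both reduce \eqref{eq:cong1}--\eqref{eq:cong3} to the joint behavior of the single vector $n^{-1/2}\mathbf{X}_{M_0+1}^{\top}\mathbf{e}$ and the Gram matrix $n^{-1}\mathbf{X}_{M_0+1}^{\top}\mathbf{X}_{M_0+1}$, then apply Condition \ref{cond:xcong}, Slutsky's theorem, and the continuous mapping theorem. The only cosmetic difference is that the paper parametrizes the projection difference via a selection matrix $\bm{\Pi}_{M_0}$ and a vector $\mathbf{c}$ whose normalization to $\mathbf{v}$ it leaves as ``easy to verify,'' whereas your explicit block-partition and Schur-complement computation supplies exactly that verification.
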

\begin{proof}
	Let $\mathbf{Q}_n=n^{-1}\mathbf{X}_{M_0+1}^{\top}\mathbf{X}_{M_0+1}$ and $\mathbf{Z}_n=n^{-1/2}\mathbf{Q}^{-1/2}\mathbf{X}_{M_0+1}^{\top} \mathbf{e}/\sigma$, where $\mathbf{Q}$ is defined in Condition \ref{cond:xcong}. Then, we have $\mathbf{e}^{\top}\mathbf{P}_{M_0+1}\mathbf{e}=\sigma^2 \mathbf{Z}_n^{\top}\mathbf{Q}^{1/2}\mathbf{Q}_n^{-1}\mathbf{Q}^{1/2}\mathbf{Z}_n$,
	\begin{align*}
		\bm{\mu}^{\top}(\mathbf{P}_{M_0+1}-\mathbf{P}_{M_0})\mathbf{e}&=\bm{\beta}_{M_0+1}^{\top} \left\{\mathbf{I}_{k_{M_0+1}}-\mathbf{Q}_n\bm{\Pi}_{M_0} \left(\bm{\Pi}_{M_0}^{\top}\mathbf{Q}_n\bm{\Pi}_{M_0}\right)^{-1}\bm{\Pi}_{M_0}^{\top}\right\}\mathbf{X}_{M_0+1}^{\top}\mathbf{e} \\
		&=n^{1/2}\sigma \bm{\beta}_{M_0+1}^{\top} \left\{\mathbf{I}_{k_{M_0+1}}-\mathbf{Q}_n\bm{\Pi}_{M_0} \left(\bm{\Pi}_{M_0}^{\top}\mathbf{Q}_n\bm{\Pi}_{M_0}\right)^{-1}\bm{\Pi}_{M_0}^{\top}\right\}\mathbf{Q}^{1/2}\mathbf{Z}_n,
	\end{align*}
	and
	\begin{equation*}
		\bm{\mu}^{\top}(\mathbf{P}_{M_0+1}-\mathbf{P}_{M_0})\bm{\mu}=n\bm{\beta}_{M_0+1}^{\top} \left\{\mathbf{Q}_n-\mathbf{Q}_n\bm{\Pi}_{M_0} \left(\bm{\Pi}_{M_0}^{\top}\mathbf{Q}_n\bm{\Pi}_{M_0}\right)^{-1} \bm{\Pi}_{M_0}^{\top}\mathbf{Q}_n\right\} \bm{\beta}_{M_0+1},
	\end{equation*}
	where $\bm{\beta}_{M_0+1}$ is defined in Lemma \ref{lem:mumu} and $\bm{\Pi}_{M_0}$ is a selection matrix such that $\mathbf{X}_{M_0+1} \bm{\Pi}_{M_0}=\mathbf{X}_{M_0}$. It follows from Condition \ref{cond:xcong} that $\mathbf{Q}_n\to \mathbf{Q}$ and $\mathbf{Z}_n\to_d \mathbf{Z}\sim N(\mathbf{0},\mathbf{I}_{k_{M_0+1}})$, which together with Slutsky's theorem and the continuous mapping theorem, yields that \eqref{eq:cong1},
	\begin{equation*}
		\frac{\bm{\mu}^{\top}(\mathbf{P}_{M_0+1}-\mathbf{P}_{M_0})\mathbf{e}}{\sqrt{\bm{\mu}^{\top}(\mathbf{P}_{M_0+1}-\mathbf{P}_{M_0})\bm{\mu}}}\to_d \sigma \frac{\mathbf{c}^{\top}\mathbf{Z}}{\sqrt{\mathbf{c}^{\top}\mathbf{c}}}\sim \sigma N(0,1),
	\end{equation*}
	and
	\begin{equation*}
		\frac{\{\bm{\mu}^{\top}(\mathbf{P}_{M_0+1}-\mathbf{P}_{M_0})\mathbf{e}\}^2}{\mathbf{e}^{\top}\mathbf{P}_{M_0+1}\mathbf{e}\{\bm{\mu}^{\top} (\mathbf{P}_{M_0+1}-\mathbf{P}_{M_0})\bm{\mu}\}}\to_d \frac{(\mathbf{c}^{\top}\mathbf{Z})^2/(\mathbf{c}^{\top}\mathbf{c})}{\mathbf{Z}^{\top}\mathbf{Z}},
	\end{equation*}
	where $\mathbf{c}= \mathbf{Q}^{1/2}\{\mathbf{I}_{k_{M_0+1}}-\bm{\Pi}_{M_0} (\bm{\Pi}_{M_0}^{\top}\mathbf{Q}\bm{\Pi}_{M_0})^{-1}\bm{\Pi}_{M_0}^{\top}\mathbf{Q}\}\bm{\beta}_{M_0+1}$. It is easy to verify that $\mathbf{c}/\sqrt{\mathbf{c}^{\top}\mathbf{c}}=\mathbf{v}$. This completes the proof of Lemma \ref{lem:convg}.
\end{proof}

The following lemma establishes the asymptotic properties of $\hat{w}_m$ for both under-fitted and over-fitted models. While Lemmas 1--2 of \citet{zhang2020PMA} have studied this problem, our results are more direct and provide simpler proofs.

\begin{lemma}\label{lem:what}
	(i) Under Conditions \ref{cond:eigvalue}--\ref{cond:sigma}, if $\phi_n (k_{M_0+1}-k_1)/(n\eta_n)\to 0$, then
	\begin{equation*}
		\sum_{m=1}^{M_0} \hat{w}_m=O_p\left\{\frac{\phi_n (k_{M_0+1}-k_1)}{n\eta_n}\right\}.
	\end{equation*}
	(ii) If Condition \ref{cond:sigma} is satisfied and $\phi_n\to \infty$, then
	\begin{equation*}
		\Pr\left(\sum_{m>M_0+1} \hat{w}_m=0\right)\to 1.
	\end{equation*}
\end{lemma}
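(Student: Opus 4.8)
The plan is to derive both parts from one exact expansion of the criterion about the true model. Using $\mathbf{y}-\mathbf{P}(\mathbf{w})\mathbf{y}=(\mathbf{I}_n-\mathbf{P}_{M_0+1})\mathbf{e}+\sum_{m\le M_0}w_m(\mathbf{P}_{M_0+1}-\mathbf{P}_m)\mathbf{y}-\sum_{m>M_0+1}w_m(\mathbf{P}_m-\mathbf{P}_{M_0+1})\mathbf{e}$ together with the orthogonality relations \eqref{eq:basic}, exactly as in the proof of Lemma \ref{lem:lossrisk}, I would show $\mathcal{G}_n(\mathbf{w})-\mathcal{G}_n(\mathbf{w}_{M_0+1}^0)=\Phi_n^U(\mathbf{w})+\Phi_n^O(\mathbf{w})$ with
\begin{align*}
\Phi_n^U(\mathbf{w})&=\sum_{m,l\le M_0}w_mw_l\,\mathbf{y}^\top(\mathbf{P}_{M_0+1}-\mathbf{P}_{\max\{m,l\}})\mathbf{y}-\phi_n\hat{\sigma}^2\sum_{m\le M_0}w_m(k_{M_0+1}-k_m),\\
\Phi_n^O(\mathbf{w})&=\Big\|\sum_{m>M_0+1}w_m(\mathbf{P}_m-\mathbf{P}_{M_0+1})\mathbf{e}\Big\|^2-2\sum_{m>M_0+1}w_m\,\mathbf{e}^\top(\mathbf{P}_m-\mathbf{P}_{M_0+1})\mathbf{e}+\phi_n\hat{\sigma}^2\sum_{m>M_0+1}w_m(k_m-k_{M_0+1}).
\end{align*}
The key structural feature is that the under- and over-fitted directions are mutually orthogonal, so $\Phi_n^U$ involves only $(w_1,\dots,w_{M_0})$, $\Phi_n^O$ involves only $(w_{M_0+2},\dots,w_{M_n})$, and the two groups of weights interact only through the simplex constraint.

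For part (ii), set $d_m=k_m-k_{M_0+1}$ and $\chi_m=\mathbf{e}^\top(\mathbf{P}_m-\mathbf{P}_{M_0+1})\mathbf{e}$. Discarding the nonnegative quadratic term gives $\Phi_n^O(\mathbf{w})\ge\sum_{m>M_0+1}w_m(\phi_n\hat{\sigma}^2 d_m-2\chi_m)$. If every bracket is positive, then $\Phi_n^O(\mathbf{w})>0$ whenever some over-fitted weight is nonzero; since moving all over-fitted mass onto $w_{M_0+1}$ leaves $\Phi_n^U$ untouched, the minimizer must satisfy $\sum_{m>M_0+1}\hat{w}_m=0$. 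Hence it suffices to show $\Pr\{\max_{m>M_0+1}2\chi_m/(\hat{\sigma}^2 d_m)<\phi_n\}\to1$, which by Condition \ref{cond:sigma} reduces to a uniform bound on $\chi_m/(\sigma^2 d_m)$, a quantity of mean $1$.

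The main obstacle is that the number of over-fitted models may diverge with no rate restriction, so plain Chebyshev with a union bound is insufficient. I would instead use a Whittle-type moment inequality for quadratic forms: since $\mathbf{P}_m-\mathbf{P}_{M_0+1}$ is idempotent of rank $d_m$ and $E|e_i|^{4+\zeta}<\infty$, one has $E|\chi_m-\sigma^2 d_m|^{2+\zeta/2}\le C\,d_m^{1+\zeta/4}$. A union bound then yields, for any fixed $\epsilon>0$,
\begin{equation*}
\sum_{m>M_0+1}\Pr\!\left(\chi_m-\sigma^2 d_m>\epsilon\phi_n\sigma^2 d_m\right)\le\frac{C}{(\epsilon\sigma^2\phi_n)^{2+\zeta/2}}\sum_{m>M_0+1}d_m^{-(1+\zeta/4)}.
\end{equation*}
Nestedness gives $d_m\ge m-M_0-1$, so $\sum_m d_m^{-(1+\zeta/4)}$ is bounded by the convergent series $\sum_{j\ge1}j^{-(1+\zeta/4)}$ uniformly in $M_n$; as $\phi_n\to\infty$ the bound tends to $0$. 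This is precisely where the $(4+\zeta)$-th moment (rather than a fourth moment) is used, and why no growth condition on $M_n$ is required.

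For part (i), let $s=\sum_{m\le M_0}\hat{w}_m$ and compare $\hat{\mathbf{w}}$ with the feasible weight $\tilde{\mathbf{w}}$ that transfers all under-fitted mass to the true model ($\tilde{w}_m=0$ for $m\le M_0$, $\tilde{w}_{M_0+1}=\hat{w}_{M_0+1}+s$, $\tilde{w}_m=\hat{w}_m$ otherwise). Because $\hat{\mathbf{w}}$ and $\tilde{\mathbf{w}}$ carry identical over-fitted weights, $\Phi_n^O$ cancels in $\mathcal{G}_n(\hat{\mathbf{w}})\le\mathcal{G}_n(\tilde{\mathbf{w}})$, leaving $\Phi_n^U(\hat{\mathbf{w}})\le\Phi_n^U(\tilde{\mathbf{w}})=0$. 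Using $\mathbf{P}_{\max\{m,l\}}\preceq\mathbf{P}_{M_0}$ and $k_{M_0+1}-k_m\le k_{M_0+1}-k_1$ I would bound $\Phi_n^U(\hat{\mathbf{w}})\ge s^2G-\phi_n\hat{\sigma}^2(k_{M_0+1}-k_1)s$ with $G=\mathbf{y}^\top(\mathbf{P}_{M_0+1}-\mathbf{P}_{M_0})\mathbf{y}$, whence $s\le\phi_n\hat{\sigma}^2(k_{M_0+1}-k_1)/G$. It remains to show $G\gtrsim n\eta_n$ with probability tending to one: by the reverse triangle inequality $\sqrt{G}\ge\|(\mathbf{P}_{M_0+1}-\mathbf{P}_{M_0})\bm{\mu}\|-\|(\mathbf{P}_{M_0+1}-\mathbf{P}_{M_0})\mathbf{e}\|$, where Lemma \ref{lem:mumu} gives $\|(\mathbf{P}_{M_0+1}-\mathbf{P}_{M_0})\bm{\mu}\|^2\ge\kappa_0 n\eta_n$ and the noise term is $O_p(\sqrt{k_{M_0+1}-k_{M_0}})=o_p(\sqrt{n\eta_n})$ under the hypothesis (which, as $\phi_n$ is bounded below for the penalties considered, entails $(k_{M_0+1}-k_{M_0})/(n\eta_n)\to0$). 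Thus $G\ge\tfrac12\kappa_0 n\eta_n$ eventually, and Condition \ref{cond:sigma} delivers $s=O_p\{\phi_n(k_{M_0+1}-k_1)/(n\eta_n)\}$.
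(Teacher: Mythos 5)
Your proposal is correct and follows essentially the same route as the paper's proof: the identical decomposition of $\mathcal{G}_n(\mathbf{w})-\mathcal{G}_n(\mathbf{w}_{M_0+1}^0)$ into separate under- and over-fitted components (the paper's $I_{n,1}$ and $I_{n,2}$), the same comparison weights that transfer mass to the true model, the same quadratic lower bound $s^2\,\mathbf{y}^{\top}(\mathbf{P}_{M_0+1}-\mathbf{P}_{M_0})\mathbf{y}$ combined with Lemma \emph{A.2}, and the same Whittle-inequality-plus-union-bound argument exploiting $k_m-k_{M_0+1}\geq m-(M_0+1)$ to get a convergent series without any growth restriction on $M_n$. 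The only cosmetic difference is that you lower-bound $\mathbf{y}^{\top}(\mathbf{P}_{M_0+1}-\mathbf{P}_{M_0})\mathbf{y}$ via the reverse triangle inequality rather than by expanding it into $\bm{\mu}$-, cross-, and $\mathbf{e}$-terms as the paper does, which changes nothing of substance.
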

\begin{proof}
	By using \eqref{eq:basic}, we can rewrite \eqref{eq:crit} as
	\begin{align*}
		\mathcal{G}_n(\mathbf{w})&=\left\|\mathbf{y}-\sum_{m=1}^{M_n} w_m \mathbf{P}_m \mathbf{y}\right\|^2+\phi_n\hat{\sigma}^2 \sum_{m=1}^{M_n} w_m k_m \\
		&=\left\|\mathbf{y}-\mathbf{P}_{M_0+1}\mathbf{y}+\sum_{m=1}^{M_n} w_m (\mathbf{P}_{M_0+1}-\mathbf{P}_m) \mathbf{y}\right\|^2+\phi_n\hat{\sigma}^2 \left\{k_{M_0+1}+ \sum_{m=1}^{M_n} w_m (k_m-k_{M_0+1})\right\} \\
		&=\left\|(\mathbf{I}_n-\mathbf{P}_{M_0+1})\mathbf{y}+\sum_{m=1}^{M_0}w_m (\mathbf{P}_{M_0+1}-\mathbf{P}_m)\mathbf{y}-\sum_{m>M_0+1} w_m (\mathbf{P}_m-\mathbf{P}_{M_0+1})\mathbf{y}\right\|^2 \\
		&\quad +\phi_n\hat{\sigma}^2 \left\{k_{M_0+1}-\sum_{m=1}^{M_0} w_m (k_{M_0+1}-k_m)+\sum_{m>M_0+1} w_m (k_m-k_{M_0+1})\right\} \\
		&=\mathcal{G}_n(\mathbf{w}_{M_0+1}^0)+I_{n,1}(\mathbf{w})+I_{n,2}(\mathbf{w}),
	\end{align*}
	where
	\begin{equation*}
		I_{n,1}(\mathbf{w})=\sum_{m=1}^{M_0} \left(w_m^2+2w_m\sum_{l=1}^{m-1} w_l\right) \mathbf{y}^{\top} (\mathbf{P}_{M_0+1}-\mathbf{P}_m)\mathbf{y}-\phi_n\hat{\sigma}^2 \sum_{m=1}^{M_0} w_m (k_{M_0+1}-k_m)
	\end{equation*}
	and
	\begin{align*}
		I_{n,2}(\mathbf{w})&=\sum_{m>M_0+1} \left(w_m^2+2w_m \sum_{l=m+1}^{M_n} w_l\right) \mathbf{e}^{\top}(\mathbf{P}_m-\mathbf{P}_{M_0+1})\mathbf{e} \\
		&\quad -\sum_{m>M_0+1} w_m \left\{2\mathbf{e}^{\top}(\mathbf{P}_m-\mathbf{P}_{M_0+1})\mathbf{e}-\phi_n\hat{\sigma}^2(k_m-k_{M_0+1})\right\}.
	\end{align*}
	
	\underline{Proof of part (i).} Denote $\tilde{\mathbf{w}}=(\mathbf{0}_{M_0\times 1}^{\top}, \sum_{m=1}^{M_0+1} \hat{w}_m, \hat{w}_{M_0+2},\ldots,\hat{w}_{M_n})^{\top}\in \mathcal{H}_n$. It follows from $\mathcal{G}_n(\hat{\mathbf{w}})\leq \mathcal{G}_n(\tilde{\mathbf{w}})$ that $I_{n,1}(\hat{\mathbf{w}})\leq 0$, i.e.,
	\begin{equation}\label{eq:ineq}
		\sum_{m=1}^{M_0} \left(\hat{w}_m^2+2\hat{w}_m\sum_{l=1}^{m-1} \hat{w}_l\right) \mathbf{y}^{\top} (\mathbf{P}_{M_0+1}-\mathbf{P}_m)\mathbf{y}\leq \phi_n\hat{\sigma}^2 \sum_{m=1}^{M_0} \hat{w}_m (k_{M_0+1}-k_m),
	\end{equation}
	which implies that $\mathbf{y}^{\top} (\mathbf{P}_{M_0+1}-\mathbf{P}_{M_0})\mathbf{y} (\sum_{m=1}^{M_0} \hat{w}_m)^2\leq \phi_n\hat{\sigma}^2 (k_{M_0+1}-k_1) \sum_{m=1}^{M_0} \hat{w}_m$. So when $\sum_{m=1}^{M_0} \hat{w}_m\neq 0$, we have
	\begin{align*}
		\sum_{m=1}^{M_0} \hat{w}_m&\leq \frac{\phi_n\hat{\sigma}^2 (k_{M_0+1}-k_1)}{\mathbf{y}^{\top} (\mathbf{P}_{M_0+1}-\mathbf{P}_{M_0})\mathbf{y}} \\
		 &=\frac{\phi_n\hat{\sigma}^2 (k_{M_0+1}-k_1)}{\bm{\mu}^{\top} (\mathbf{P}_{M_0+1}-\mathbf{P}_{M_0})\bm{\mu}+\mathbf{e}^{\top} (\mathbf{P}_{M_0+1}-\mathbf{P}_{M_0})\mathbf{e}+2\bm{\mu}^{\top}(\mathbf{P}_{M_0+1}-\mathbf{P}_{M_0})\mathbf{e}} \\
		 &\leq \frac{\phi_n\hat{\sigma}^2 (k_{M_0+1}-k_1)}{\kappa_0 n\eta_n+\mathbf{e}^{\top} (\mathbf{P}_{M_0+1}-\mathbf{P}_{M_0})\mathbf{e}+O_p\{(n\eta_n)^{1/2}\}} \\
		 &=O_p\left\{\frac{\phi_n (k_{M_0+1}-k_1)}{n\eta_n}\right\},
	\end{align*}
	where the second inequality follows from Lemma \ref{lem:mumu} and the last step is derived by Condition \ref{cond:sigma} and $\mathbf{e}^{\top} (\mathbf{P}_{M_0+1}-\mathbf{P}_{M_0})\mathbf{e}/(n\eta_n)=o_p(1)$. This completes the proof of part (i).
	
	\underline{Proof of part (ii).} Denote $\tilde{\mathbf{w}}^*=(\hat{w}_1,\ldots,\hat{w}_{M_0}, \sum_{m=M_0+1}^{M_n} \hat{w}_m, 0,\ldots,0)^{\top}\in \mathcal{H}_n$. It follows from $\mathcal{G}_n(\hat{\mathbf{w}})\leq \mathcal{G}_n(\tilde{\mathbf{w}}^*)$ that $I_{n,2}(\hat{\mathbf{w}})\leq 0$, i.e.,
	\begin{align}\label{eq:overfit}
		&\sum_{m>M_0+1} \left(\hat{w}_m^2+2\hat{w}_m \sum_{l=m+1}^{M_n} \hat{w}_l\right) \mathbf{e}^{\top}(\mathbf{P}_m-\mathbf{P}_{M_0+1})\mathbf{e} \notag \\
		&\quad \leq \sum_{m>M_0+1} \hat{w}_m \left\{2\mathbf{e}^{\top}(\mathbf{P}_m-\mathbf{P}_{M_0+1})\mathbf{e}-\phi_n\hat{\sigma}^2(k_m-k_{M_0+1})\right\}.
	\end{align}
	Define the event
	\begin{equation*}
		\mathfrak{A}_n=\left\{\mathbf{e}^{\top}(\mathbf{P}_m-\mathbf{P}_{M_0+1})\mathbf{e}< \phi_n\hat{\sigma}^2/2 (k_m-k_{M_0+1})~\text{for all}~ m>M_0+1\right\}.
	\end{equation*}
	We first prove that $\Pr(\mathfrak{A}_n)\to 1$ if $\phi_n\to \infty$. Let $\mathfrak{B}_n$ be the set of all realizations such that $c_1\leq \hat{\sigma}^2/\sigma^2\leq c_2$, where $c_1$ and $c_2$ are defined in Condition \ref{cond:sigma}. Observe that when $\mathfrak{B}_n$ holds,
	\begin{align}\label{eq:max.ineq}
		&\Pr\left\{\max_{m>M_0+1} \frac{\mathbf{e}^{\top}(\mathbf{P}_m-\mathbf{P}_{M_0+1})\mathbf{e}}{\sigma^2 (k_m-k_{M_0+1})}\geq \frac{\phi_n}{2}\frac{\hat{\sigma}^2}{\sigma^2}\right\} \notag\\
		&\quad \leq \Pr\left\{\max_{m>M_0+1} \frac{\mathbf{e}^{\top}(\mathbf{P}_m-\mathbf{P}_{M_0+1})\mathbf{e}}{\sigma^2 (k_m-k_{M_0+1})}\geq \frac{\phi_n}{2}c_1\right\} \notag\\
		&\quad \leq \sum_{m>M_0+1} \Pr\left\{\mathbf{e}^{\top}(\mathbf{P}_m-\mathbf{P}_{M_0+1})\mathbf{e}\geq \phi_n c_1\sigma^2/2 (k_m-k_{M_0+1})\right\} \notag\\
		&\quad = \sum_{m>M_0+1} \Pr\Bigl[\mathbf{e}^{\top}(\mathbf{P}_m-\mathbf{P}_{M_0+1})\mathbf{e}-E\{\mathbf{e}^{\top}(\mathbf{P}_m-\mathbf{P}_{M_0+1})\mathbf{e}\} \notag\\
		&\hspace{2.9cm}\geq (\phi_n c_1/2-1)\sigma^2 (k_m-k_{M_0+1})\Bigr] \notag\\
		&\quad \leq \sum_{m>M_0+1} \frac{E \left|\mathbf{e}^{\top}(\mathbf{P}_m-\mathbf{P}_{M_0+1})\mathbf{e}-E\{\mathbf{e}^{\top}(\mathbf{P}_m-\mathbf{P}_{M_0+1})\mathbf{e}\}\right|^{2+\zeta/2}}{(\phi_n c_1/2-1)^{2+\zeta/2}\sigma^{4+\zeta} (k_m-k_{M_0+1})^{2+\zeta/2}} \notag\\
		&\quad \leq \sum_{m>M_0+1} \frac{C_{\zeta}[\mathrm{tr}\{(\mathbf{P}_m-\mathbf{P}_{M_0+1})^2\}]^{1+\zeta/4}}{(\phi_n c_1/2-1)^{2+\zeta/2}\sigma^{4+\zeta} (k_m-k_{M_0+1})^{2+\zeta/2}} \notag\\
		&\quad =\frac{C_{\zeta}}{(\phi_n c_1/2-1)^{2+\zeta/2}\sigma^{4+\zeta}} \sum_{m>M_0+1}\frac{1}{(k_m-k_{M_0+1})^{1+\zeta/4}} \notag\\
		&\quad \leq \frac{C_{\zeta}}{(\phi_n c_1/2-1)^{2+\zeta/2}\sigma^{4+\zeta}} \sum_{j=1}^{\infty}\frac{1}{j^{1+\zeta/4}},
	\end{align}
	where the third inequality follows from Markov's inequality, the fourth inequality is derived by Whittle's inequality \citep[Theorem 2]{whittle1960}, $C_{\zeta}$ is the constant in Whittle's inequality, and the last inequality is a direct consequence of the fact that $k_m-k_{M_0+1}\geq m-(M_0+1)$ for any $m>M_0+1$. Combining \eqref{eq:max.ineq}, $\Pr(\mathfrak{B}_n)\to 1$ under Condition \ref{cond:sigma}, and the fact that $\sum_{j=1}^{\infty} j^{-1-\zeta/4}<\infty$, we have if $\phi_n\to \infty$,
	\begin{equation*}
		\Pr\left\{\max_{m>M_0+1} \frac{\mathbf{e}^{\top}(\mathbf{P}_m-\mathbf{P}_{M_0+1})\mathbf{e}}{\sigma^2 (k_m-k_{M_0+1})}\geq \frac{\phi_n}{2}\frac{\hat{\sigma}^2}{\sigma^2}\right\} \to 0.
	\end{equation*}
	Therefore, $\Pr(\mathfrak{A}_n)\to 1$ follows if $\phi_n\to \infty$. On the set $\mathfrak{A}_n$, by using \eqref{eq:overfit} and the non-negativity of $\hat{w}_m$, we can assert that $\sum_{m>M_0+1} \hat{w}_m=0$. As a result, if $\phi_n\to \infty$, $\Pr(\sum_{m>M_0+1} \hat{w}_m=0)\geq \Pr(\mathfrak{A}_n)\to 1$. This completes the proof of part (ii).
\end{proof}

\subsection{Proof of the Results in Example \ref{exam:toy}}\label{apped:examp1}

From the obtained results in Example \ref{exam:toy}, we have
\begin{align*}
	\frac{\inf_{\mathbf{w}\in \mathcal{H}_n} L_n(\mathbf{w})}{L_n(\mathbf{w}_2^0)}&=1-\frac{\{\mathbf{y}^{\top}(\mathbf{P}_2-\mathbf{P}_1)\mathbf{e}\}^2}{\mathbf{e}^{\top}\mathbf{P}_2\mathbf{e} \{\mathbf{y}^{\top}(\mathbf{P}_2-\mathbf{P}_1)\mathbf{y}\}} \mathbf{1}\{0<w_1^{\mathrm{opt}}<1\} \\
	&\quad +\frac{\mathbf{y}^{\top}(\mathbf{P}_2-\mathbf{P}_1)(\mathbf{y}-2\mathbf{e})}{\mathbf{e}^{\top}\mathbf{P}_2\mathbf{e}}\mathbf{1}\{w_1^{\mathrm{opt}}\geq 1\},
\end{align*}
where $\mathbf{1}\{\cdot\}$ is the indicator function. It follows that for a fixed constant $z\in [0, 1)$,
\begin{align}\label{eq:inf.prob}
	&\Pr\left\{\frac{\inf_{\mathbf{w}\in \mathcal{H}_n} L_n(\mathbf{w})}{L_n(\mathbf{w}_2^0)}\leq z\right\} \notag \\ 
	&\quad= \Pr\left[1-\frac{\{\mathbf{y}^{\top}(\mathbf{P}_2-\mathbf{P}_1)\mathbf{e}\}^2}{\mathbf{e}^{\top}\mathbf{P}_2\mathbf{e} \{\mathbf{y}^{\top}(\mathbf{P}_2-\mathbf{P}_1)\mathbf{y}\}}\leq z, 0<w_1^{\mathrm{opt}}<1\right] \notag \\
	&\qquad + \Pr\left\{1+\frac{\mathbf{y}^{\top}(\mathbf{P}_2-\mathbf{P}_1)(\mathbf{y}-2\mathbf{e})}{\mathbf{e}^{\top}\mathbf{P}_2\mathbf{e}} \leq z, w_1^{\mathrm{opt}}\geq 1\right\} \notag \\
	&\quad=: A_{n,1}+A_{n,2}.
\end{align}
Next, we analyze $A_{n,1}$ and $A_{n,2}$. First,
\begin{align*}
	\Pr(w_1^{\mathrm{opt}}\geq 1)&=\Pr\left\{ \mathbf{y}^{\top}(\mathbf{P}_2-\mathbf{P}_1)\mathbf{e}\geq \mathbf{y}^{\top}(\mathbf{P}_2-\mathbf{P}_1)\mathbf{y}\right\} \notag \\
	&=\Pr\left(n\beta_2^2+\beta_2\sum_{i=1}^n x_{i2}e_i\leq 0\right) \notag \\
	&=\Pr\left\{\beta_2 N(0,\sigma^2)\leq -\sqrt{n}\beta_2^2\right\} \to 0,
\end{align*}
which leads to $A_{n,2}\to 0$. Observe that $\mathbf{e}^{\top}\mathbf{P}_2\mathbf{e}=S_1^2+S_2^2$, where $S_1=n^{-1/2}\sum_{i=1}^n x_{i1}e_i$ and $S_2=n^{-1/2}\sum_{i=1}^n x_{i2}e_i$. It is easy to see that $S_1, S_2\sim N(0,\sigma^2)$ and they are independent. Therefore, we have
\begin{equation}\label{eq:beta}
	\frac{\{\mathbf{y}^{\top}(\mathbf{P}_2-\mathbf{P}_1)\mathbf{e}\}^2}{\mathbf{e}^{\top}\mathbf{P}_2\mathbf{e} \{\mathbf{y}^{\top}(\mathbf{P}_2-\mathbf{P}_1)\mathbf{y}\}}=\frac{S_2^4+n\beta_2^2 S_2^2+2\sqrt{n}\beta_2 S_2^3}{(S_1^2+S_2^2)(n\beta_2^2+S_2^2+2\sqrt{n}\beta_2 S_2)}\to_p \frac{S_2^2}{S_1^2+S_2^2}.
\end{equation}
It is easy to show that $0<w_1^{\mathrm{opt}}<1$ is equivalent to $\beta_2 S_2>0$, and $S_1^2/(S_1^2+S_2^2)\sim \mathrm{Beta}(1/2, 1/2)$, which together with \eqref{eq:beta}, yield that
\begin{align*}
	A_{n,1}&\to
	\begin{cases}
		\Pr\{S_2^2/(S_1^2+S_2^2)\geq 1-z, S_2>0\}, & \text{if}~\beta_2>0 \\
		\Pr\{S_2^2/(S_1^2+S_2^2)\geq 1-z, S_2<0\}, & \text{if}~\beta_2<0 \\
	\end{cases} \\
	&=\frac{1}{2}\Pr\left(\frac{S_1^2}{S_1^2+S_2^2}\leq z\right)=\frac{1}{2}\Pr\left\{ \mathrm{Beta}\left(\frac{1}{2},\frac{1}{2}\right)\leq z\right\}.
\end{align*}
Combining the above results of $A_{n,1}$ and $A_{n,2}$ with \eqref{eq:inf.prob}, we have
\begin{equation*}
	\Pr\left\{\frac{\inf_{\mathbf{w}\in \mathcal{H}_n} L_n(\mathbf{w})}{L_n(\mathbf{w}_2^0)}\leq z\right\}\to
	\begin{cases}
		0, & \text{if}~z< 0, \\
		2^{-1}\Pr\{ \mathrm{Beta}\left(1/2,1/2\right)\leq z\}, & \text{if}~0\leq z<1, \\
		1, & \text{if}~z\geq 1. \\
	\end{cases}
\end{equation*}
Finally, we obtain \eqref{eq:exmp1}.

\subsection{Proof of Lemma \ref{lem:loss}}\label{subsec:lem1}

For any $\mathbf{w}=(w_1,\ldots,w_{M_n})^{\top}\in \mathcal{H}_n$, denote $\bar{\mathbf{w}}=(w_1,\ldots,w_{M_0},\sum_{m=M_0+1}^{M_n} w_m, 0, \ldots,0)^{\top}\in \mathcal{H}_n$. Then, by the first part of Lemma \ref{lem:lossrisk}, we have
\begin{equation*}
	L_n(\mathbf{w})-L_n(\bar{\mathbf{w}})=\sum_{m>M_0+1} \left(w_m^2+2w_m \sum_{l=m+1}^{M_n} w_l\right) \mathbf{e}^{\top}(\mathbf{P}_m-\mathbf{P}_{M_0+1})\mathbf{e}\geq 0.
\end{equation*}
Therefore,
\begin{equation}\label{eq:inf.loss}
	\inf_{\mathbf{w}\in \mathcal{H}_n} L_n(\mathbf{w})=L_n(\mathbf{w}_{M_0+1}^0)+\inf_{(w_1,\ldots,w_{M_0})\in \mathcal{U}_n} F_n(w_1,\ldots,w_{M_0}),
\end{equation}
where $\mathcal{U}_n=\{(w_1,\ldots,w_{M_0})\colon w_m \geq 0; \sum_{m=1}^{M_0} w_m\leq 1\}$ and
\begin{equation*}
	F_n(w_1,\ldots,w_{M_0})=\sum_{m=1}^{M_0} \left(w_m^2+2w_m\sum_{l=1}^{m-1} w_l\right) \mathbf{y}^{\top} (\mathbf{P}_{M_0+1}-\mathbf{P}_m)\mathbf{y}-2\sum_{m=1}^{M_0} w_m \mathbf{y}^{\top} (\mathbf{P}_{M_0+1}-\mathbf{P}_m)\mathbf{e}.
\end{equation*}
Now we consider $k_{M_0+1}$ is fixed and $k_{M_0+1}\to \infty$, respectively.

\underline{(i) Consider $k_{M_0+1}$ is fixed.} First, we observe that
\begin{align}
	&\inf_{(w_1,\ldots,w_{M_0})\in \mathcal{U}_n} F_n(w_1,\ldots,w_{M_0}) \notag \\
	&\quad \leq \inf_{w_1=\cdots=w_{M_0-1}=0\leq w_{M_0}\leq 1} F_n(w_1,\ldots,w_{M_0}) \notag \\
	&\quad =\inf_{0\leq w_{M_0}\leq 1} \left\{w_{M_0}^2 \mathbf{y}^{\top} (\mathbf{P}_{M_0+1}-\mathbf{P}_{M_0})\mathbf{y}-2 w_{M_0} \mathbf{y}^{\top} (\mathbf{P}_{M_0+1}-\mathbf{P}_{M_0})\mathbf{e}\right\} \notag \\
	&\quad =-\frac{\{\mathbf{y}^{\top} (\mathbf{P}_{M_0+1}-\mathbf{P}_{M_0})\mathbf{e}\}^2}{\mathbf{y}^{\top} (\mathbf{P}_{M_0+1}-\mathbf{P}_{M_0})\mathbf{y}} \mathbf{1}\{0<w_{M_0}^{\text{opt}}<1\}+\mathbf{y}^{\top} (\mathbf{P}_{M_0+1}-\mathbf{P}_{M_0})(\mathbf{y}-2\mathbf{e})\mathbf{1}\{w_{M_0}^{\text{opt}}\geq 1\}, \label{eq:under.loss}
\end{align}
where
\begin{equation*}
	w_{M_0}^{\text{opt}}=\frac{\mathbf{y}^{\top} (\mathbf{P}_{M_0+1}-\mathbf{P}_{M_0})\mathbf{e}}{\mathbf{y}^{\top} (\mathbf{P}_{M_0+1}-\mathbf{P}_{M_0})\mathbf{y}}.
\end{equation*}
Then, it follows from Equations \eqref{eq:inf.loss} and \eqref{eq:under.loss} that
\begin{align*}
	\frac{\inf_{\mathbf{w}\in \mathcal{H}_n} L_n(\mathbf{w})}{L_n(\mathbf{w}_{M_0+1}^0)}&\leq 1-\frac{\{\mathbf{y}^{\top} (\mathbf{P}_{M_0+1}-\mathbf{P}_{M_0})\mathbf{e}\}^2}{\mathbf{e}^{\top}\mathbf{P}_{M_0+1}\mathbf{e}\{\mathbf{y}^{\top} (\mathbf{P}_{M_0+1}-\mathbf{P}_{M_0})\mathbf{y}\}} \mathbf{1}\{0<w_{M_0}^{\text{opt}}<1\} \notag \\
	&\quad +\frac{\mathbf{y}^{\top} (\mathbf{P}_{M_0+1}-\mathbf{P}_{M_0})(\mathbf{y}-2\mathbf{e})}{\mathbf{e}^{\top}\mathbf{P}_{M_0+1}\mathbf{e}}\mathbf{1}\{w_{M_0}^{\text{opt}}\geq 1\},
\end{align*}
where the equation holds if and only if $M_0=1$. Then for any constant $z>1$,
\begin{align}\label{eq:ratio}
 &\Pr\left\{\frac{L_n(\mathbf{w}_{M_0+1}^0)}{\inf_{\mathbf{w}\in \mathcal{H}_n} L_n(\mathbf{w})}\geq z\right\} \notag\\
 &\quad = \Pr\left\{\frac{\inf_{\mathbf{w}\in \mathcal{H}_n} L_n(\mathbf{w})}{L_n(\mathbf{w}_{M_0+1}^0)}\leq \frac{1}{z}\right\} \notag\\
&\quad \geq \Pr\left[\frac{\{\mathbf{y}^{\top} (\mathbf{P}_{M_0+1}-\mathbf{P}_{M_0})\mathbf{e}\}^2}{\mathbf{e}^{\top}\mathbf{P}_{M_0+1}\mathbf{e}\{\mathbf{y}^{\top} (\mathbf{P}_{M_0+1}-\mathbf{P}_{M_0})\mathbf{y}\}} \mathbf{1}\{0<w_{M_0}^{\text{opt}}<1\}\geq 1-\frac{1}{z}\right] \notag \\
&\qquad +\Pr\left[\frac{\mathbf{y}^{\top} (\mathbf{P}_{M_0+1}-\mathbf{P}_{M_0})(\mathbf{y}-2\mathbf{e})}{\mathbf{e}^{\top}\mathbf{P}_{M_0+1}\mathbf{e}}\mathbf{1}\{w_{M_0}^{\text{opt}}\geq 1\}\leq \frac{1}{z}-1\right].
\end{align}
Using Markov's inequality and Lemma \ref{lem:mumu}, we have
\begin{align*}
	\Pr(w_{M_0}^{\text{opt}}\geq 1)&=\Pr\left\{\mathbf{y}^{\top} (\mathbf{P}_{M_0+1}-\mathbf{P}_{M_0})\mathbf{e}\geq \mathbf{y}^{\top} (\mathbf{P}_{M_0+1}-\mathbf{P}_{M_0})\mathbf{y}\right\} \\
	&=\Pr\left\{-\bm{\mu}^{\top} (\mathbf{P}_{M_0+1}-\mathbf{P}_{M_0})\mathbf{e}\geq \bm{\mu}^{\top} (\mathbf{P}_{M_0+1}-\mathbf{P}_{M_0})\bm{\mu}\right\} \\
	&\leq \frac{E\{\bm{\mu}^{\top} (\mathbf{P}_{M_0+1}-\mathbf{P}_{M_0})\mathbf{e}\}^2}{\{\bm{\mu}^{\top} (\mathbf{P}_{M_0+1}-\mathbf{P}_{M_0})\bm{\mu}\}^2} \\
	&= \frac{\sigma^2}{\bm{\mu}^{\top} (\mathbf{P}_{M_0+1}-\mathbf{P}_{M_0})\bm{\mu}} \\
	&\leq \frac{\sigma^2}{\kappa_0 n\eta_n} \to 0,
\end{align*}
where the last step follows from $\eta_n^{-1}=O(1)$ for the fixed true model dimension. It follows that $\Pr(w_{M_0}^{\text{opt}}\geq 1)\to 0$, which implies
\begin{equation}\label{eq:term1}
\Pr\left[\frac{\mathbf{y}^{\top} (\mathbf{P}_{M_0+1}-\mathbf{P}_{M_0})(\mathbf{y}-2\mathbf{e})}{\mathbf{e}^{\top}\mathbf{P}_{M_0+1}\mathbf{e}}\mathbf{1}\{w_{M_0}^{\text{opt}}\geq 1\}\leq \frac{1}{z}-1\right]\to 0.
\end{equation}
For the second term of \eqref{eq:ratio},
\begin{align}\label{eq:disfun}
	&\Pr\left[\frac{\{\mathbf{y}^{\top} (\mathbf{P}_{M_0+1}-\mathbf{P}_{M_0})\mathbf{e}\}^2}{\mathbf{e}^{\top}\mathbf{P}_{M_0+1}\mathbf{e}\{\mathbf{y}^{\top} (\mathbf{P}_{M_0+1}-\mathbf{P}_{M_0})\mathbf{y}\}} \mathbf{1}\{0<w_{M_0}^{\text{opt}}<1\}\geq 1-\frac{1}{z}\right] \notag \\
	&\quad =\Pr\left[\frac{\{\mathbf{y}^{\top} (\mathbf{P}_{M_0+1}-\mathbf{P}_{M_0})\mathbf{e}\}^2}{\mathbf{e}^{\top}\mathbf{P}_{M_0+1}\mathbf{e}\{\mathbf{y}^{\top} (\mathbf{P}_{M_0+1}-\mathbf{P}_{M_0})\mathbf{y}\}}\geq 1-\frac{1}{z}, 0<w_{M_0}^{\text{opt}}<1\right] \notag \\
	&\quad =\Pr\left[\frac{\{\mathbf{y}^{\top} (\mathbf{P}_{M_0+1}-\mathbf{P}_{M_0})\mathbf{e}\}^2}{\mathbf{e}^{\top}\mathbf{P}_{M_0+1}\mathbf{e}\{\mathbf{y}^{\top} (\mathbf{P}_{M_0+1}-\mathbf{P}_{M_0})\mathbf{y}\}}\geq 1-\frac{1}{z}, w_{M_0}^{\text{opt}}>0\right] \notag \\
	&\qquad -\Pr\left[\frac{\{\mathbf{y}^{\top} (\mathbf{P}_{M_0+1}-\mathbf{P}_{M_0})\mathbf{e}\}^2}{\mathbf{e}^{\top}\mathbf{P}_{M_0+1}\mathbf{e}\{\mathbf{y}^{\top} (\mathbf{P}_{M_0+1}-\mathbf{P}_{M_0})\mathbf{y}\}}\geq 1-\frac{1}{z}, w_{M_0}^{\text{opt}}\geq 1\right] \notag \\
	&\quad=: A_{n,3}+A_{n,4}.
\end{align}
Since $\Pr(w_{M_0}^{\text{opt}}\geq 1)\to 0$, we have $A_{n,4}\to 0$. Observe that
\begin{equation}\label{eq:asy}
	\frac{\{\mathbf{y}^{\top} (\mathbf{P}_{M_0+1}-\mathbf{P}_{M_0})\mathbf{e}\}^2}{\mathbf{e}^{\top}\mathbf{P}_{M_0+1}\mathbf{e}\{\mathbf{y}^{\top} (\mathbf{P}_{M_0+1}-\mathbf{P}_{M_0})\mathbf{y}\}}=\frac{\{\bm{\mu}^{\top}(\mathbf{P}_{M_0+1}-\mathbf{P}_{M_0})\mathbf{e}\}^2}{\mathbf{e}^{\top}\mathbf{P}_{M_0+1}\mathbf{e}\{\bm{\mu}^{\top} (\mathbf{P}_{M_0+1}-\mathbf{P}_{M_0})\bm{\mu}\}} B_{n,1}+B_{n,2},
\end{equation}
where
\begin{equation*}
	B_{n,1}=\frac{1}{1+\frac{\mathbf{e}^{\top} (\mathbf{P}_{M_0+1}-\mathbf{P}_{M_0})\mathbf{e}}{\bm{\mu}^{\top} (\mathbf{P}_{M_0+1}-\mathbf{P}_{M_0})\bm{\mu}}+\frac{2\bm{\mu}^{\top}(\mathbf{P}_{M_0+1}-\mathbf{P}_{M_0})\mathbf{e}}{\bm{\mu}^{\top} (\mathbf{P}_{M_0+1}-\mathbf{P}_{M_0})\bm{\mu}}}
\end{equation*}
and
\begin{equation*}
	B_{n,2}=\frac{\{\mathbf{e}^{\top} (\mathbf{P}_{M_0+1}-\mathbf{P}_{M_0})\mathbf{e}\}^2+2\{\bm{\mu}^{\top}(\mathbf{P}_{M_0+1}-\mathbf{P}_{M_0})\mathbf{e}\}\{\mathbf{e}^{\top} (\mathbf{P}_{M_0+1}-\mathbf{P}_{M_0})\mathbf{e}\}}{\mathbf{e}^{\top}\mathbf{P}_{M_0+1}\mathbf{e}\{\bm{\mu}^{\top} (\mathbf{P}_{M_0+1}-\mathbf{P}_{M_0})\bm{\mu}\}} B_{n,1}.
\end{equation*}
By using Lemma \ref{lem:mumu}, $\bm{\mu}^{\top} (\mathbf{P}_{M_0+1}-\mathbf{P}_{M_0})\bm{\mu}\geq \kappa_0 n\eta_n\to \infty$, which together with \eqref{eq:cong1} and \eqref{eq:cong2}, leads to $B_{n,1}\to_p 1$ and $B_{n,2}\to_p 0$. Combining it with \eqref{eq:cong2}, \eqref{eq:cong3}, \eqref{eq:asy}, and Slutsky's theorem, we have
\begin{equation*}
	\begin{pmatrix}
	\frac{\{\mathbf{y}^{\top} (\mathbf{P}_{M_0+1}-\mathbf{P}_{M_0})\mathbf{e}\}^2}{\mathbf{e}^{\top}\mathbf{P}_{M_0+1}\mathbf{e}\{\mathbf{y}^{\top} (\mathbf{P}_{M_0+1}-\mathbf{P}_{M_0})\mathbf{y}\}} \\
	\frac{\mathbf{y}^{\top} (\mathbf{P}_{M_0+1}-\mathbf{P}_{M_0})\mathbf{e}}{\sqrt{\bm{\mu}^{\top} (\mathbf{P}_{M_0+1}-\mathbf{P}_{M_0})\bm{\mu}}}
	\end{pmatrix}
	\to_d
	\begin{pmatrix} \frac{(\mathbf{v}^{\top}\mathbf{Z})^2}{\mathbf{Z}^{\top}\mathbf{Z}} \\
	\sigma \mathbf{v}^{\top}\mathbf{Z}
	\end{pmatrix},
\end{equation*}
where $\mathbf{c}$ and $\mathbf{Z}$ are defined in Lemma \ref{lem:convg}. Therefore,
\begin{align*}
	A_{n,3}&=\Pr\left[\frac{\{\mathbf{y}^{\top} (\mathbf{P}_{M_0+1}-\mathbf{P}_{M_0})\mathbf{e}\}^2}{\mathbf{e}^{\top}\mathbf{P}_{M_0+1}\mathbf{e}\{\mathbf{y}^{\top} (\mathbf{P}_{M_0+1}-\mathbf{P}_{M_0})\mathbf{y}\}}\geq 1-\frac{1}{z}, \mathbf{y}^{\top} (\mathbf{P}_{M_0+1}-\mathbf{P}_{M_0})\mathbf{e}>0\right] \\
	&=\Pr\left[\frac{\{\mathbf{y}^{\top} (\mathbf{P}_{M_0+1}-\mathbf{P}_{M_0})\mathbf{e}\}^2}{\mathbf{e}^{\top}\mathbf{P}_{M_0+1}\mathbf{e}\{\mathbf{y}^{\top} (\mathbf{P}_{M_0+1}-\mathbf{P}_{M_0})\mathbf{y}\}}\geq 1-\frac{1}{z}, \frac{\mathbf{y}^{\top} (\mathbf{P}_{M_0+1}-\mathbf{P}_{M_0})\mathbf{e}}{\sqrt{\bm{\mu}^{\top} (\mathbf{P}_{M_0+1}-\mathbf{P}_{M_0})\bm{\mu}}}>0\right] \\
	&\to \Pr\left\{ \frac{(\mathbf{v}^{\top}\mathbf{Z})^2}{\mathbf{Z}^{\top}\mathbf{Z}}\geq 1-\frac{1}{z}, \mathbf{v}^{\top}\mathbf{Z}>0\right\} \\
	&=\frac{1}{2} \Pr\left\{ \frac{(\mathbf{v}^{\top}\mathbf{Z})^2}{\mathbf{Z}^{\top}\mathbf{Z}}\geq 1-\frac{1}{z}\right\}.
\end{align*}
Combining the results of $A_{n,3}$ and $A_{n,4}$ with \eqref{eq:disfun}, we have
\begin{align}\label{eq:term2}
&\Pr\left[\frac{\{\mathbf{y}^{\top} (\mathbf{P}_{M_0+1}-\mathbf{P}_{M_0})\mathbf{e}\}^2}{\mathbf{e}^{\top}\mathbf{P}_{M_0+1}\mathbf{e}\{\mathbf{y}^{\top} (\mathbf{P}_{M_0+1}-\mathbf{P}_{M_0})\mathbf{y}\}} \mathbf{1}\{0<w_{M_0}^{\text{opt}}<1\}\geq 1-\frac{1}{z}\right] \notag \\
&\quad \to \frac{1}{2} \Pr\left\{ \frac{(\mathbf{v}^{\top}\mathbf{Z})^2}{\mathbf{Z}^{\top}\mathbf{Z}}\geq 1-\frac{1}{z}\right\}.
\end{align}
As a result, using \eqref{eq:ratio}, \eqref{eq:term1}, and \eqref{eq:term2}, we have
\begin{equation*}
	\liminf_{n\to\infty} \Pr\left\{\frac{L_n(\mathbf{w}_{M_0+1}^0)}{\inf_{\mathbf{w}\in \mathcal{H}_n} L_n(\mathbf{w})}\geq z\right\}\geq \frac{1}{2} \Pr\left\{ \frac{(\mathbf{v}^{\top}\mathbf{Z})^2}{\mathbf{Z}^{\top}\mathbf{Z}}\geq 1-\frac{1}{z}\right\}.
\end{equation*}
Finally, \eqref{eq:infp} holds and $L_n(\mathbf{w}_{M_0+1}^0)/\inf_{\mathbf{w}\in \mathcal{H}_n} L_n(\mathbf{w})\nrightarrow_p 1$. This completes the proof of Lemma \ref{lem:loss}(i).

\underline{(ii) Consider $k_{M_0+1}\to \infty$.} First, we observe that
\begin{align}\label{eq:divgf}
	&\inf_{(w_1,\ldots,w_{M_0})\in \mathcal{U}_n} F_n(w_1,\ldots,w_{M_0}) \notag \\
	&\quad \geq \inf_{(w_1,\ldots,w_{M_0})\in \mathcal{U}_n} \left\{\left(\sum_{m=1}^{M_0} w_m\right)^2 \mathbf{y}^{\top} (\mathbf{P}_{M_0+1}-\mathbf{P}_{M_0})\mathbf{y}\right. \notag \\
	&\hspace{3.3cm}\left.-\left(\sum_{m=1}^{M_0} w_m\right) 2\max_{1\leq m\leq M_0} \left|\mathbf{y}^{\top} (\mathbf{P}_{M_0+1}-\mathbf{P}_m)\mathbf{e}\right|\right\} \notag \\
	&\quad \geq -\frac{\left\{\max_{1\leq m\leq M_0} \left|\mathbf{y}^{\top} (\mathbf{P}_{M_0+1}-\mathbf{P}_m)\mathbf{e}\right|\right\}^2}{\mathbf{y}^{\top} (\mathbf{P}_{M_0+1}-\mathbf{P}_{M_0})\mathbf{y}} \notag \\
	&\quad \geq -\frac{\left\{\max_{1\leq m\leq M_0} \left|\bm{\mu}^{\top} (\mathbf{P}_{M_0+1}-\mathbf{P}_m)\mathbf{e}\right|+\mathbf{e}^{\top} (\mathbf{P}_{M_0+1}-\mathbf{P}_1)\mathbf{e}\right\}^2}{\bm{\mu}^{\top} (\mathbf{P}_{M_0+1}-\mathbf{P}_{M_0})\bm{\mu}+\mathbf{e}^{\top} (\mathbf{P}_{M_0+1}-\mathbf{P}_{M_0})\mathbf{e}+2\bm{\mu}^{\top}(\mathbf{P}_{M_0+1}-\mathbf{P}_{M_0})\mathbf{e}}.
\end{align}
For any $C_{\epsilon}>0$, it follows from Markov's inequality and Condition \ref{cond:mu} that
\begin{align*}
	&\Pr\left\{\max_{1\leq m\leq M_0} \left|\bm{\mu}^{\top} (\mathbf{P}_{M_0+1}-\mathbf{P}_m)\mathbf{e}\right|\geq (n M_0)^{1/2}C_{\epsilon}\right\} \\
	&\quad \leq \sum_{m=1}^{M_0} \Pr\left\{\left|\bm{\mu}^{\top} (\mathbf{P}_{M_0+1}-\mathbf{P}_m)\mathbf{e}\right|\geq (n M_0)^{1/2} C_{\epsilon}\right\} \\
	&\quad \leq \sum_{m=1}^{M_0} \{(n M_0)^{1/2}C_{\epsilon}\}^{-2} E\left\{\bm{\mu}^{\top} (\mathbf{P}_{M_0+1}-\mathbf{P}_m)\mathbf{e}\right\}^2 \\
	&\quad \leq \sigma^2\sum_{m=1}^{M_0} \{(n M_0)^{1/2}C_{\epsilon}\}^{-2} \bm{\mu}^{\top} (\mathbf{P}_{M_0+1}-\mathbf{P}_m)\bm{\mu} \\
	&\quad \leq \sigma^2 M_0 \{(n M_0)^{1/2}C_{\epsilon}\}^{-2} \|\bm{\mu}\|^2=O(C_{\epsilon}^{-2}),
\end{align*}
which yields that
\begin{equation}\label{eq:max.mue}
	\max_{1\leq m\leq M_0} \left|\bm{\mu}^{\top} (\mathbf{P}_{M_0+1}-\mathbf{P}_m)\mathbf{e}\right|=O_p\left\{(n M_0)^{1/2}\right\}.
\end{equation}
By using Whittle's inequality \citep[Theorem 2]{whittle1960} and Chebyshev's inequality, it is easy to show that
\begin{equation}\label{eq:quadform}
	\mathbf{e}^{\top}\mathbf{P}_{M_0+1}\mathbf{e}/(\sigma^2 k_{M_0+1})\to_p 1\quad \text{as}~k_{M_0+1}\to \infty.
\end{equation}
Combining \eqref{eq:inf.loss}, \eqref{eq:divgf}--\eqref{eq:quadform}, and Lemma \ref{lem:mumu}, we obtain
\begin{equation}\label{eq:Lratio}
	\frac{\inf_{\mathbf{w}\in \mathcal{H}_n} L_n(\mathbf{w})}{L_n(\mathbf{w}_{M_0+1}^0)}\geq 1-\frac{\left[O_p\{(n M_0)^{1/2}\}+\mathbf{e}^{\top} (\mathbf{P}_{M_0+1}-\mathbf{P}_1)\mathbf{e}\right]^2}{\mathbf{e}^{\top}\mathbf{P}_{M_0+1}\mathbf{e}[\kappa_0 n\eta_n+\mathbf{e}^{\top} (\mathbf{P}_{M_0+1}-\mathbf{P}_{M_0})\mathbf{e}+O_p\{(n\eta_n)^{1/2}\}]}\to_p 1,
\end{equation}
where ``$\to_p$" follows from the conditions $(k_{M_0+1}-k_1)/(n\eta_n)\to 0$ and $M_0/(k_{M_0+1}\eta_n)\to 0$. This completes the proof of Lemma \ref{lem:loss}(ii).

\subsection{Proof of the Results in Remark \ref{rem:special.w}}\label{subsec:special.w}

Using similar arguments as in the proof of \eqref{eq:inf.loss}, we obtain
\begin{align}\label{eq:inf.dist}
	&\inf_{\mathbf{w}\in \mathcal{H}_n(N)} L_n(\mathbf{w})-L_n(\mathbf{w}_{M_0+1}^0)=\inf_{(w_1,\ldots,w_{M_0})\in \mathcal{U}_n(N)} F_n(w_1,\ldots,w_{M_0}) \notag \\
	&\quad \geq \inf_{(w_1,\ldots,w_{M_0})\in \mathcal{U}_n(N)} \left\{\left(\sum_{m=1}^{M_0} w_m\right)^2 \mathbf{y}^{\top} (\mathbf{P}_{M_0+1}-\mathbf{P}_{M_0})\mathbf{y}\right. \notag \\
	&\hspace{3.8cm} \left.-\left(\sum_{m=1}^{M_0} w_m\right) 2\max_{1\leq m\leq M_0} \left|\mathbf{y}^{\top} (\mathbf{P}_{M_0+1}-\mathbf{P}_m)\mathbf{e}\right|\right\},
\end{align}
where $\mathcal{U}_n(N)=\{(w_1,\ldots,w_{M_0})\colon w_m \in \{0, 1/N, 2/N, \ldots, 1\}; \sum_{m=1}^{M_0} w_m\leq 1\}$. In this case, $\sum_{m=1}^{M_0} w_m\in \{0, 1/N, 2/N, \ldots, 1\}$. Define
\begin{equation*}
	w_{\mathrm{sum}}^*=\frac{\max_{1\leq m\leq M_0} \left|\mathbf{y}^{\top} (\mathbf{P}_{M_0+1}-\mathbf{P}_m)\mathbf{e}\right|}{\mathbf{y}^{\top} (\mathbf{P}_{M_0+1}-\mathbf{P}_{M_0})\mathbf{y}}.
\end{equation*}
It is easy to see that when $w_{\mathrm{sum}}^*<1/(2N)$, the third infimum in \eqref{eq:inf.dist} is achieved if $\sum_{m=1}^{M_0} w_m=0$, and thus $\inf_{\mathbf{w}\in \mathcal{H}_n(N)} L_n(\mathbf{w})=L_n(\mathbf{w}_{M_0+1}^0)$. It follows that
\begin{equation}\label{eq:disw}
	\Pr\left\{\inf_{\mathbf{w}\in \mathcal{H}_n(N)} L_n(\mathbf{w})=L_n(\mathbf{w}_{M_0+1}^0)\right\}\geq \Pr\left(w_{\mathrm{sum}}^*<\frac{1}{2N}\right).
\end{equation}
By similar arguments to the proof of \eqref{eq:Lratio}, we can show that if $(k_{M_0+1}-k_1)/(n\eta_n)\to 0$ and $M_0/(n\eta_n^2)\to 0$, then $w_{\mathrm{sum}}^*\to_p 0$ and $\Pr\{w_{\mathrm{sum}}^*\geq 1/(2N)\}\to 0$, which along with \eqref{eq:disw} yields that $\Pr\{\inf_{\mathbf{w}\in \mathcal{H}_n(N)} L_n(\mathbf{w})=L_n(\mathbf{w}_{M_0+1}^0)\}\to 1$.

For the special weight set $\mathcal{H}_n^{\delta}$, similar to the proof of \eqref{eq:inf.dist}, we have
\begin{align*}
	&\inf_{\mathbf{w}\in \mathcal{H}_n^{\delta}} L_n(\mathbf{w})-L_n(\mathbf{w}_{M_0+1}^0) \notag \\
	&\quad \geq \inf_{\sum_{m=1}^{M_0} w_m=0~\text{or}~\sum_{m=1}^{M_0} w_m\geq \delta n^{-\tau_0}} \left\{\left(\sum_{m=1}^{M_0} w_m\right)^2 \mathbf{y}^{\top} (\mathbf{P}_{M_0+1}-\mathbf{P}_{M_0})\mathbf{y}\right. \\
	&\hspace{5.9cm} \left.-\left(\sum_{m=1}^{M_0} w_m\right) 2\max_{1\leq m\leq M_0} \left|\mathbf{y}^{\top} (\mathbf{P}_{M_0+1}-\mathbf{P}_m)\mathbf{e}\right|\right\},
\end{align*}
where when $w_{\mathrm{sum}}^*<\delta n^{-\tau_0}/2$, the infimum is achieved if $\sum_{m=1}^{M_0} w_m=0$. Therefore,
\begin{equation*}
	\Pr\left\{\inf_{\mathbf{w}\in \mathcal{H}_n^{\delta}} L_n(\mathbf{w})=L_n(\mathbf{w}_{M_0+1}^0)\right\}\geq \Pr\left(w_{\mathrm{sum}}^*<\frac{1}{2}\delta n^{-\tau_0}\right).
\end{equation*}
It can be easily shown that if $(k_{M_0+1}-k_1)/n^{1-2\tau_0}\to 0$ and $M_0/n^{1-4\tau_0}\to 0$, then $n^{\tau_0}w_{\mathrm{sum}}^*\to_p 0$ and $\Pr(w_{\mathrm{sum}}^*\geq \delta n^{-\tau_0}/2)\to 0$, which leads to $\Pr\{\inf_{\mathbf{w}\in \mathcal{H}_n^{\delta}} L_n(\mathbf{w})=L_n(\mathbf{w}_{M_0+1}^0)\}\to 1$.

\subsection{Proof of Theorem \ref{thm:opt.loss}}\label{subsec:thm1}

From the first part of Lemma \ref{lem:lossrisk}, we have $L_n(\hat{\mathbf{w}})=L_n(\mathbf{w}_{M_0+1}^0)+II_{n,1}-2II_{n,2}+II_{n,3}$,
where
\begin{align*}
	&II_{n,1}=\sum_{m=1}^{M_0} \left(\hat{w}_m^2+2\hat{w}_m\sum_{l=1}^{m-1} \hat{w}_l\right) \mathbf{y}^{\top} (\mathbf{P}_{M_0+1}-\mathbf{P}_m)\mathbf{y}, \\
	&II_{n,2}=\sum_{m=1}^{M_0} \hat{w}_m \mathbf{y}^{\top} (\mathbf{P}_{M_0+1}-\mathbf{P}_m)\mathbf{e},
	\intertext{and}
	&II_{n,3}=\sum_{m>M_0+1} \left(\hat{w}_m^2+2\hat{w}_m \sum_{l=m+1}^{M_n} \hat{w}_l\right) \mathbf{e}^{\top}(\mathbf{P}_m-\mathbf{P}_{M_0+1})\mathbf{e}.
\end{align*}
It follows from Inequality \eqref{eq:ineq} and Lemma \ref{lem:what} that
\begin{equation*}
	II_{n,1}\leq \phi_n\hat{\sigma}^2 \sum_{m=1}^{M_0} \hat{w}_m (k_{M_0+1}-k_m)\leq \phi_n\hat{\sigma}^2 (k_{M_0+1}-k_1)\sum_{m=1}^{M_0} \hat{w}_m=O_p\left\{\frac{\phi_n^2 (k_{M_0+1}-k_1)^2}{n\eta_n}\right\}.
\end{equation*}
By using \eqref{eq:max.mue} and Lemma \ref{lem:what}, we have
\begin{align*}
	|II_{n,2}|&\leq \max_{1\leq m\leq M_0} \left|\mathbf{y}^{\top} (\mathbf{P}_{M_0+1}-\mathbf{P}_m)\mathbf{e}\right| \sum_{m=1}^{M_0} \hat{w}_m \\
	&\leq \left\{\max_{1\leq m\leq M_0} \left|\bm{\mu}^{\top} (\mathbf{P}_{M_0+1}-\mathbf{P}_m)\mathbf{e}\right|+\mathbf{e}^{\top} (\mathbf{P}_{M_0+1}-\mathbf{P}_1)\mathbf{e}\right\} \sum_{m=1}^{M_0} \hat{w}_m \\
	&=\left[O_p\{(n M_0)^{1/2}\}+O_p(k_{M_0+1}-k_1)\right] O_p\left\{\frac{\phi_n (k_{M_0+1}-k_1)}{n\eta_n}\right\} \\
	&=O_p\left\{\frac{\phi_n M_0^{1/2} (k_{M_0+1}-k_1)}{n^{1/2}\eta_n}\right\}+O_p\left\{\frac{\phi_n (k_{M_0+1}-k_1)^2}{n\eta_n}\right\}.
\end{align*}
From \eqref{eq:cong1} and \eqref{eq:quadform}, we know that $1/L_n(\mathbf{w}_{M_0+1}^0)=1/\mathbf{e}^{\top}\mathbf{P}_{M_0+1}\mathbf{e}=O_p(k_{M_0+1}^{-1})$. Therefore, if $\phi_n^2(k_{M_0+1}-k_1)^2/(k_{M_0+1} n\eta_n)\to 0$ and $\phi_n^2 M_0 (k_{M_0+1}-k_1)^2/(k_{M_0+1}^2 n\eta_n^2)\to 0$, we have $II_{n,1}/L_n(\mathbf{w}_{M_0+1}^0)=o_p(1)$ and $II_{n,2}/L_n(\mathbf{w}_{M_0+1}^0)=o_p(1)$. As a result,
\begin{equation}\label{eq:loss.h.t}
	\frac{L_n(\hat{\mathbf{w}})}{L_n(\mathbf{w}_{M_0+1}^0)}=1+o_p(1)+\frac{II_{n,3}}{\mathbf{e}^{\top}\mathbf{P}_{M_0+1}\mathbf{e}}.
\end{equation}
Next, we consider $\phi_n\to \infty$ and $\phi_n=2$, respectively.

\underline{(i) Consider $\phi_n\to \infty$.} Define the event $\mathfrak{F}_n=\{\sum_{m>M_0+1} \hat{w}_m=0\}$. On the set $\mathfrak{F}_n$, $II_{n,3}=0$, which along with \eqref{eq:loss.h.t} and the fact that $\Pr(\mathfrak{F}_n)\to 1$ from Lemma \ref{lem:what}(ii), yields that
\begin{equation}\label{eq:hat.true}
	\frac{L_n(\hat{\mathbf{w}})}{L_n(\mathbf{w}_{M_0+1}^0)}=1+o_p(1).
\end{equation}
If $k_{M_0+1}$ is fixed and $\phi_n^2/n \to 0$, it follows from Lemma \ref{lem:loss}(i) and \eqref{eq:hat.true} that the asymptotic loss optimality does not hold. Furthermore, using the same argument for deriving \eqref{eq:infp}, \eqref{eq:hat.true}, and Slutsky's theorem, it can be easily proved that
\begin{align*}
	\liminf_{n\to\infty} \Pr\left\{\frac{L_n(\hat{\mathbf{w}})}{\inf_{\mathbf{w}\in \mathcal{H}_n} L_n(\mathbf{w})}\geq z\right\}&=\liminf_{n\to\infty} \Pr\left\{\frac{L_n(\mathbf{w}_{M_0+1}^0)}{L_n(\hat{\mathbf{w}})}\times \frac{\inf_{\mathbf{w}\in \mathcal{H}_n} L_n(\mathbf{w})}{L_n(\mathbf{w}_{M_0+1}^0)}\leq \frac{1}{z}\right\} \\
	&\geq \frac{1}{2} \Pr\left\{ \frac{(\mathbf{v}^{\top}\mathbf{Z})^2}{\|\mathbf{Z}\|^2}\geq 1-\frac{1}{z}\right\}.
\end{align*}
Finally, \eqref{eq:infpw} holds. Observe that $\phi_n^2(k_{M_0+1}-k_1)/(n\eta_n)\to 0$ and $M_0/(k_{M_0+1}\eta_n)\to 0$ imply that $\phi_n^2 M_0 (k_{M_0+1}-k_1)^2/(k_{M_0+1}^2 n\eta_n^2)\to 0$. Consequently, if $k_{M_0+1}\to \infty$, $\phi_n^2(k_{M_0+1}-k_1)/(n\eta_n)\to 0$, and $M_0/(k_{M_0+1}\eta_n)\to 0$, we can deduce from Lemma \ref{lem:loss}(ii) and \eqref{eq:hat.true} that the asymptotic loss optimality holds.

\underline{(ii) Consider $\phi_n=2$.} Let us first consider the case where $(k_{M_0+1}-k_1)/(n\eta_n)\to 0$ and $M_0/(n\eta_n^2)\to 0$. It follows from \eqref{eq:loss.h.t} that
\begin{align}\label{eq:phi2}
	\frac{L_n(\hat{\mathbf{w}})}{\inf_{\mathbf{w}\in \mathcal{H}_n} L_n(\mathbf{w})}&\geq \frac{L_n(\hat{\mathbf{w}})}{L_n(\mathbf{w}_{M_0+1}^0)} \notag\\
	&=1+o_p(1)+\sum_{m>M_0+1} \left(\hat{w}_m^2+2\hat{w}_m \sum_{l=m+1}^{M_n} \hat{w}_l\right) \left(\frac{\mathbf{e}^{\top}\mathbf{P}_m\mathbf{e}}{\mathbf{e}^{\top}\mathbf{P}_{M_0+1}\mathbf{e}}-1\right) \notag\\
	&\geq 1+o_p(1)+\sum_{m>M_0+1} \hat{w}_m^2 \left(\frac{\mathbf{e}^{\top}\mathbf{P}_m\mathbf{e}}{\mathbf{e}^{\top}\mathbf{P}_{M_0+1}\mathbf{e}}-1\right).
\end{align}
Observe that $\sum_{m>M_0+1} \hat{w}_m^2 (\frac{\mathbf{e}^{\top}\mathbf{P}_m\mathbf{e}}{\mathbf{e}^{\top}\mathbf{P}_{M_0+1}\mathbf{e}}-1)\geq 0$. If $L_n(\hat{\mathbf{w}})/\inf_{\mathbf{w}\in \mathcal{H}_n} L_n(\mathbf{w})\to_p 1$, then from \eqref{eq:phi2}, we can assert that $\sum_{m>M_0+1} \hat{w}_m^2 (\frac{\mathbf{e}^{\top}\mathbf{P}_m\mathbf{e}}{\mathbf{e}^{\top}\mathbf{P}_{M_0+1}\mathbf{e}}-1)\to_p 0$. Therefore, as long as $\sum_{m>M_0+1} \hat{w}_m^2 (\frac{\mathbf{e}^{\top}\mathbf{P}_m\mathbf{e}}{\mathbf{e}^{\top}\mathbf{P}_{M_0+1}\mathbf{e}}-1)\nrightarrow_p 0$, the asymptotic loss optimality does not hold.

Next, we consider the case where $k_{M_0+1}\to \infty$, $(k_{M_0+1}-k_1)/(n\eta_n)\to 0$, $M_0/(k_{M_0+1}\eta_n)\to 0$, and $k_{M_n}/k_{M_0+1}\to 1$. Similar to \eqref{eq:quadform}, it is easy to show that $\mathbf{e}^{\top}\mathbf{P}_{M_n}\mathbf{e}/(\sigma^2 k_{M_n})\to_p 1$. Therefore, $\mathbf{e}^{\top}\mathbf{P}_{M_n}\mathbf{e}/\mathbf{e}^{\top}\mathbf{P}_{M_0+1}\mathbf{e}\to_p k_{M_n}/k_{M_0+1}\to 1$ and
\begin{align*}
	\frac{II_{n,3}}{\mathbf{e}^{\top}\mathbf{P}_{M_0+1}\mathbf{e}}&=\sum_{m>M_0+1} \left(\hat{w}_m^2+2\hat{w}_m \sum_{l=m+1}^{M_n} \hat{w}_l\right) \left(\frac{\mathbf{e}^{\top}\mathbf{P}_m\mathbf{e}}{\mathbf{e}^{\top}\mathbf{P}_{M_0+1}\mathbf{e}}-1\right) \\
	&\leq \left(\sum_{m>M_0+1}\hat{w}_m\right)^2 \left(\frac{\mathbf{e}^{\top}\mathbf{P}_{M_n}\mathbf{e}}{\mathbf{e}^{\top}\mathbf{P}_{M_0+1}\mathbf{e}}-1\right) \\
	&\leq \frac{\mathbf{e}^{\top}\mathbf{P}_{M_n}\mathbf{e}}{\mathbf{e}^{\top}\mathbf{P}_{M_0+1}\mathbf{e}}-1\to_p 0.
\end{align*}
It follows that \eqref{eq:hat.true} holds, which along with Lemma \ref{lem:loss}(ii), yields that the asymptotic loss optimality holds.

\subsection{Proof of the Results in Remark \ref{rem:subgauss}}\label{subsec:subg}

If $e_i$ is sub-Gaussian with a variance proxy $\bar{\sigma}^2$, it can be easily verified that for any $m\in \{1,\ldots,M_0\}$, $\frac{\bm{\mu}^{\top} (\mathbf{P}_{M_0+1}-\mathbf{P}_m)\mathbf{e}}{\sqrt{\bm{\mu}^{\top} (\mathbf{P}_{M_0+1}-\mathbf{P}_m)\bm{\mu}}}$ is also sub-Gaussian with the variance proxy $\bar{\sigma}^2$. It follows from Exercise 2.12 of \citet{wainwright.HDS2019} that
\begin{equation*}
	E \left\{\max_{1\leq m\leq M_0} \left|\frac{\bm{\mu}^{\top} (\mathbf{P}_{M_0+1}-\mathbf{P}_m)\mathbf{e}}{\sqrt{\bm{\mu}^{\top} (\mathbf{P}_{M_0+1}-\mathbf{P}_m)\bm{\mu}}}\right|\right\}\leq \sqrt{2\bar{\sigma}^2 \log(2M_0)},
\end{equation*}
which along with Markov's inequality and Condition \ref{cond:mu}, yields that
\begin{equation}\label{eq:max.mue2}
	\max_{1\leq m\leq M_0} \left|\bm{\mu}^{\top} (\mathbf{P}_{M_0+1}-\mathbf{P}_m)\mathbf{e}\right|=O_p\left(\{n \log(2M_0)\}^{1/2}\right).
\end{equation}
Consequently, if $e_i$ is sub-Gaussian, we can replace \eqref{eq:max.mue} with \eqref{eq:max.mue2}, and thus the condition $M_0/(k_{M_0+1}\eta_n)\to 0$ in Lemma \ref{lem:loss}(ii) can be relaxed to $\log(2M_0)/(k_{M_0+1}\eta_n)\to 0$.

Next, by using \eqref{eq:max.mue2}, it is easy to see that when the conditions $M_0/(k_{M_0+1}\eta_n)\to 0$, $\phi_n^2 M_0 (k_{M_0+1}-k_1)^2/(k_{M_0+1}^2 n\eta_n^2)\to 0$, and $M_0/(n\eta_n^2)\to 0$ in Appendix \ref{subsec:thm1} are replaced by $\log(2M_0)/(k_{M_0+1}\eta_n)\to 0$, $\phi_n^2 \log(2M_0) (k_{M_0+1}-k_1)^2/(k_{M_0+1}^2 n\eta_n^2)\to 0$, and $\log(2M_0)/(n\eta_n^2)\to 0$, respectively, the results in Appendix \ref{subsec:thm1} still hold. Moreover, $\phi_n^2(k_{M_0+1}-k_1)/(n\eta_n)\to 0$ and $\log(2M_0)/(k_{M_0+1}\eta_n)\to 0$ imply that $\phi_n^2 \log(2M_0) (k_{M_0+1}-k_1)^2/(k_{M_0+1}^2 n\eta_n^2)\to 0$. Consequently, the conditions $M_0/(k_{M_0+1}\eta_n)\to 0$ and $M_0/(n\eta_n^2)\to 0$ in Theorem \ref{thm:opt.loss} can be relaxed to $\log(2M_0)/(k_{M_0+1}\eta_n)\to 0$ and $\log(2M_0)/(n\eta_n^2)\to 0$, respectively.

\subsection{Proof of Lemma \ref{lem:risk}}

By the second part of Lemma \ref{lem:lossrisk}, it is easy to show that
\begin{align*}
	&\inf_{\mathbf{w}\in \mathcal{H}_n} R_n(\mathbf{w})-R_n(\mathbf{w}_{M_0+1}^0) \\
	&\quad =\inf_{(w_1,\ldots,w_{M_0})\in \mathcal{U}_n} \Biggl[\sum_{m=1}^{M_0} \left(w_m^2+2w_m\sum_{l=1}^{m-1} w_l\right) \left\{\bm{\mu}^{\top} (\mathbf{P}_{M_0+1}-\mathbf{P}_m)\bm{\mu}+\sigma^2 (k_{M_0+1}-k_m)\right\} \\
	&\hspace{3.3cm} -2\sigma^2 \sum_{m=1}^{M_0} w_m (k_{M_0+1}-k_m)\Biggr] \\
	&\quad \geq \inf_{(w_1,\ldots,w_{M_0})\in \mathcal{U}_n} \Biggl[\left(\sum_{m=1}^{M_0} w_m\right)^2 \left\{\bm{\mu}^{\top} (\mathbf{P}_{M_0+1}-\mathbf{P}_{M_0})\bm{\mu}+\sigma^2 (k_{M_0+1}-k_{M_0})\right\} \\
	&\hspace{3.3cm} -\left(\sum_{m=1}^{M_0} w_m\right) 2\sigma^2(k_{M_0+1}-k_1)\Biggr] \\
	&\quad \geq -\frac{\sigma^4(k_{M_0+1}-k_1)^2}{\bm{\mu}^{\top} (\mathbf{P}_{M_0+1}-\mathbf{P}_{M_0})\bm{\mu}+\sigma^2 (k_{M_0+1}-k_{M_0})} \\
	&\quad \geq -\frac{\sigma^4(k_{M_0+1}-k_1)^2}{\kappa_0 n\eta_n+\sigma^2 (k_{M_0+1}-k_{M_0})},
\end{align*}
where the last inequality follows from Lemma \ref{lem:mumu}. Since $R_n(\mathbf{w}_{M_0+1}^0)=\sigma^2 k_{M_0+1}$ and $(k_{M_0+1}-k_1)/(n\eta_n)\to 0$, we have
\begin{align*}
	1\geq \frac{\inf_{\mathbf{w}\in \mathcal{H}_n} R_n(\mathbf{w})} {R_n(\mathbf{w}_{M_0+1}^0)} &\geq 1-\frac{\sigma^2(k_{M_0+1}-k_1)}{\kappa_0 n\eta_n+\sigma^2 (k_{M_0+1}-k_{M_0})}\times \frac{k_{M_0+1}-k_1}{k_{M_0+1}} \\
	&\geq 1-\frac{\sigma^2(k_{M_0+1}-k_1)/(n\eta_n)}{\kappa_0+\sigma^2 (k_{M_0+1}-k_{M_0})/(n\eta_n)} \to 1.
\end{align*}
This completes the proof of Lemma \ref{lem:risk}.

\subsection{Proof of Theorem \ref{thm:opt.risk}}\label{subsec:thm2}

From the second part of Lemma \ref{lem:lossrisk}, we have $R_n(\hat{\mathbf{w}})=R_n(\mathbf{w}_{M_0+1}^0)+III_{n,1}-2III_{n,2}+III_{n,3}$,
where
\begin{align*}
	& III_{n,1}=\sum_{m=1}^{M_0} \left(\hat{w}_m^2+2\hat{w}_m\sum_{l=1}^{m-1} \hat{w}_l\right) \left\{\bm{\mu}^{\top} (\mathbf{P}_{M_0+1}-\mathbf{P}_m)\bm{\mu}+\sigma^2 (k_{M_0+1}-k_m)\right\}, \\
	& III_{n,2}=\sigma^2 \sum_{m=1}^{M_0} \hat{w}_m (k_{M_0+1}-k_m),
	\intertext{and}
	& III_{n,3}=\sigma^2 \sum_{m>M_0+1} \left(\hat{w}_m^2+2\hat{w}_m \sum_{l=m+1}^{M_n} \hat{w}_l\right) (k_m-k_{M_0+1}).
\end{align*}
Using Lemma \ref{lem:what} and Condition \ref{cond:mu}, we have
\begin{align*}
	III_{n,1}&\leq \left(\sum_{m=1}^{M_0} \hat{w}_m\right)^2 \left\{\bm{\mu}^{\top} (\mathbf{P}_{M_0+1}-\mathbf{P}_1)\bm{\mu}+\sigma^2 (k_{M_0+1}-k_1)\right\} \\
	&\leq O_p\left\{\frac{\phi_n^2 (k_{M_0+1}-k_1)^2}{(n\eta_n)^2}\right\}\{\|\bm{\mu}\|^2+\sigma^2 (k_{M_0+1}-k_1)\} \\
	&=O_p\left\{\frac{\phi_n^2 (k_{M_0+1}-k_1)^2}{n\eta_n^2}\right\}+O_p\left\{\frac{\phi_n^2 (k_{M_0+1}-k_1)^3}{(n\eta_n)^2}\right\}
\end{align*}
and
\begin{equation*}
	III_{n,2}\leq \sigma^2(k_{M_0+1}-k_1) \sum_{m=1}^{M_0} \hat{w}_m=O_p\left\{\frac{\phi_n (k_{M_0+1}-k_1)^2}{n\eta_n}\right\}.
\end{equation*}
Since $R_n(\mathbf{w}_{M_0+1}^0)=\sigma^2 k_{M_0+1}$ and $\phi_n^2 (k_{M_0+1}-k_1)/(n\eta_n^2)\to 0$, we have $III_{n,1}/R_n(\mathbf{w}_{M_0+1}^0)=o_p(1)$ and $III_{n,2}/R_n(\mathbf{w}_{M_0+1}^0)=o_p(1)$. Therefore,
\begin{equation}\label{eq:risk.h.t}
	\frac{R_n(\hat{\mathbf{w}})}{R_n(\mathbf{w}_{M_0+1}^0)}=1+o_p(1)+\frac{III_{n,3}}{\sigma^2 k_{M_0+1}}.
\end{equation}
Next, we consider $\phi_n\to \infty$ and $\phi_n=2$, respectively.

\underline{(i) Consider $\phi_n\to \infty$.} On the set $\mathfrak{F}_n=\{\sum_{m>M_0+1} \hat{w}_m=0\}$, we have $III_{n,3}=0$, which along with \eqref{eq:risk.h.t} and the fact that $\Pr(\mathfrak{F}_n)\to 1$ from Lemma \ref{lem:what}(ii), yields that
\begin{equation}\label{eq:risk.hat.true}
	\frac{R_n(\hat{\mathbf{w}})}{R_n(\mathbf{w}_{M_0+1}^0)}=1+o_p(1).
\end{equation}
Observe that $\phi_n^2 (k_{M_0+1}-k_1)/(n\eta_n^2)\to 0$ implies that $(k_{M_0+1}-k_1)/(n\eta_n)\to 0$. Finally, if $\phi_n^2 (k_{M_0+1}-k_1)/(n\eta_n^2)\to 0$, we can deduce from Lemma \ref{lem:risk} and \eqref{eq:risk.hat.true} that the asymptotic risk optimality holds.

\underline{(ii) Consider $\phi_n=2$.} If $(k_{M_0+1}-k_1)/(n\eta_n^2)\to 0$, it follows from \eqref{eq:risk.h.t} that
\begin{align*}
	\frac{R_n(\hat{\mathbf{w}})}{\inf_{\mathbf{w}\in \mathcal{H}_n} R_n(\mathbf{w})}&\geq \frac{R_n(\hat{\mathbf{w}})}{R_n(\mathbf{w}_{M_0+1}^0)} \\
	&=1+o_p(1)+\sum_{m>M_0+1} \left(\hat{w}_m^2+2\hat{w}_m \sum_{l=m+1}^{M_n} \hat{w}_l\right) \left(\frac{k_m}{k_{M_0+1}}-1\right) \\
	&\geq 1+o_p(1)+\sum_{m>M_0+1} \hat{w}_m^2 \left(\frac{k_m}{k_{M_0+1}}-1\right).
\end{align*}
Observe that $\sum_{m>M_0+1} \hat{w}_m^2 (\frac{k_m}{k_{M_0+1}}-1)\geq 0$. Therefore, if $(k_{M_0+1}-k_1)/(n\eta_n^2)\to 0$, then as long as $\sum_{m>M_0+1} \hat{w}_m^2 (\frac{k_m}{k_{M_0+1}}-1)\nrightarrow_p 0$, the asymptotic risk optimality does not hold.

Next, we consider the case where $k_{M_0+1}\to \infty$, $(k_{M_0+1}-k_1)/(n\eta_n^2)\to 0$, and $k_{M_n}/k_{M_0+1}\to 1$. Observe that
\begin{align*}
	\frac{III_{n,3}}{\sigma^2 k_{M_0+1}}&=\sum_{m>M_0+1} \left(\hat{w}_m^2+2\hat{w}_m \sum_{l=m+1}^{M_n} \hat{w}_l\right)\left(\frac{k_m}{k_{M_0+1}}-1\right) \\
	&\leq \left(\sum_{m>M_0+1}\hat{w}_m\right)^2 \left(\frac{k_{M_n}}{k_{M_0+1}}-1\right) \\
	&\leq \frac{k_{M_n}}{k_{M_0+1}}-1 \to 0.
\end{align*}
It follows that \eqref{eq:risk.hat.true} holds, which along with Lemma \ref{lem:risk}, yields that the asymptotic risk optimality holds.

\newpage

\baselineskip=14pt
\bibliographystyle{apalike}
\bibliography{asyopt}

\begin{thebibliography}{}

\bibitem[Ando and Li, 2014]{ando2014}
Ando, T. and Li, K.-C. (2014).
\newblock A model-averaging approach for high-dimensional regression.
\newblock {\em Journal of the American Statistical Association},
  109(505):254--265.

\bibitem[Ding et~al., 2018]{ding2018}
Ding, J., Tarokh, V., and Yang, Y. (2018).
\newblock Model selection techniques: An overview.
\newblock {\em IEEE Signal Processing Magazine}, 35(6):16--34.

\bibitem[Fang and Liu, 2020]{fang2020}
Fang, F. and Liu, M. (2020).
\newblock Limit of the optimal weight in least squares model averaging with
  non-nested models.
\newblock {\em Economics Letters}, 196:109586.

\bibitem[Fang et~al., 2022]{fang.EL2022}
Fang, F., Yang, Q., and Tian, W. (2022).
\newblock Cross-validation for selecting the penalty factor in least squares
  model averaging.
\newblock {\em Economics Letters}, 217:110683.

\bibitem[Fang et~al., 2023]{fang2023}
Fang, F., Yuan, C., and Tian, W. (2023).
\newblock An asymptotic theory for least squares model averaging with nested
  models.
\newblock {\em Econometric Theory}, 39(2):412--441.

\bibitem[Forchini, 2002]{forchini2002}
Forchini, G. (2002).
\newblock The exact cumulative distribution function of a ratio of quadratic
  forms in normal variables, with application to the {AR(1)} model.
\newblock {\em Econometric Theory}, 18(4):823--852.

\bibitem[Hansen, 2007]{hansen2007mma}
Hansen, B.~E. (2007).
\newblock Least squares model averaging.
\newblock {\em Econometrica}, 75(4):1175--1189.

\bibitem[Hansen, 2014]{hansen2014}
Hansen, B.~E. (2014).
\newblock Model averaging, asymptotic risk, and regressor groups.
\newblock {\em Quantitative Economics}, 5(3):495--530.

\bibitem[Hansen and Racine, 2012]{hansen2012}
Hansen, B.~E. and Racine, J.~S. (2012).
\newblock Jackknife model averaging.
\newblock {\em Journal of Econometrics}, 167(1):38--46.

\bibitem[Liang et~al., 2011]{liang2011}
Liang, H., Zou, G., Wan, A.~T., and Zhang, X. (2011).
\newblock Optimal weight choice for frequentist model average estimators.
\newblock {\em Journal of the American Statistical Association},
  106(495):1053--1066.

\bibitem[Liao et~al., 2021]{liao2021.joe}
Liao, J., Zou, G., Gao, Y., and Zhang, X. (2021).
\newblock Model averaging prediction for time series models with a diverging
  number of parameters.
\newblock {\em Journal of Econometrics}, 223(1):190--221.

\bibitem[Liu, 2015]{liu2015joe}
Liu, C.-A. (2015).
\newblock Distribution theory of the least squares averaging estimator.
\newblock {\em Journal of Econometrics}, 186(1):142--159.

\bibitem[Liu and Okui, 2013]{liu2013ma}
Liu, Q. and Okui, R. (2013).
\newblock Heteroscedasticity-robust ${C_p}$ model averaging.
\newblock {\em The Econometrics Journal}, 16(3):463--472.

\bibitem[Liu et~al., 2016]{liu2016}
Liu, Q., Okui, R., and Yoshimura, A. (2016).
\newblock Generalized least squares model averaging.
\newblock {\em Econometric Reviews}, 35(8-10):1692--1752.

\bibitem[Lu and Su, 2015]{lu2015}
Lu, X. and Su, L. (2015).
\newblock Jackknife model averaging for quantile regressions.
\newblock {\em Journal of Econometrics}, 188(1):40--58.

\bibitem[Peng et~al., 2024]{peng.opt}
Peng, J., Li, Y., and Yang, Y. (2024).
\newblock On optimality of {Mallows} model averaging.
\newblock {\em Journal of the American Statistical Association}, in press.

\bibitem[Shao, 1997]{shao1997}
Shao, J. (1997).
\newblock An asymptotic theory for linear model selection.
\newblock {\em Statistica Sinica}, 7(2):221--242.

\bibitem[Wainwright, 2019]{wainwright.HDS2019}
Wainwright, M.~J. (2019).
\newblock {\em High-Dimensional Statistics: A Non-Asymptotic Viewpoint}.
\newblock Cambridge University Press, Cambridge.

\bibitem[Wan et~al., 2010]{wan2010}
Wan, A.~T., Zhang, X., and Zou, G. (2010).
\newblock Least squares model averaging by {M}allows criterion.
\newblock {\em Journal of Econometrics}, 156(2):277--283.

\bibitem[Whittle, 1960]{whittle1960}
Whittle, P. (1960).
\newblock Bounds for the moments of linear and quadratic forms in independent
  variables.
\newblock {\em Theory of Probability and Its Applications}, 5(3):302--305.

\bibitem[Xie, 2015]{xie2015}
Xie, T. (2015).
\newblock Prediction model averaging estimator.
\newblock {\em Economics Letters}, 131:5--8.

\bibitem[Yu et~al., 2024]{yu.glm2024}
Yu, D., Lian, H., Sun, Y., Zhang, X., and Hong, Y. (2024).
\newblock Post-averaging inference for optimal model averaging estimator in
  generalized linear models.
\newblock {\em Econometric Reviews}, 43(2-4):98--122.

\bibitem[Yuan and Yang, 2005]{yuan.yang2005}
Yuan, Z. and Yang, Y. (2005).
\newblock Combining linear regression models: When and how?
\newblock {\em Journal of the American Statistical Association},
  100(472):1202--1214.

\bibitem[Zhang, 2015]{zhang2015el}
Zhang, X. (2015).
\newblock Consistency of model averaging estimators.
\newblock {\em Economics Letters}, 130:120--123.

\bibitem[Zhang, 2021]{zhang2021}
Zhang, X. (2021).
\newblock A new study on asymptotic optimality of least squares model
  averaging.
\newblock {\em Econometric Theory}, 37(2):388--407.

\bibitem[Zhang and Liu, 2019]{zhang.liu2019}
Zhang, X. and Liu, C.-A. (2019).
\newblock Inference after model averaging in linear regression models.
\newblock {\em Econometric Theory}, 35(4):816--841.

\bibitem[Zhang et~al., 2013]{zhang2013JMA}
Zhang, X., Wan, A.~T., and Zou, G. (2013).
\newblock Model averaging by jackknife criterion in models with dependent data.
\newblock {\em Journal of Econometrics}, 174(2):82--94.

\bibitem[Zhang et~al., 2016]{zhang2016jasa}
Zhang, X., Yu, D., Zou, G., and Liang, H. (2016).
\newblock Optimal model averaging estimation for generalized linear models and
  generalized linear mixed-effects models.
\newblock {\em Journal of the American Statistical Association},
  111(516):1775--1790.

\bibitem[Zhang et~al., 2015]{zhang2015}
Zhang, X., Zou, G., and Carroll, R.~J. (2015).
\newblock Model averaging based on {Kullback-Leibler} distance.
\newblock {\em Statistica Sinica}, 25(4):1583--1598.

\bibitem[Zhang et~al., 2020]{zhang2020PMA}
Zhang, X., Zou, G., Liang, H., and Carroll, R.~J. (2020).
\newblock Parsimonious model averaging with a diverging number of parameters.
\newblock {\em Journal of the American Statistical Association},
  115(530):972--984.

\bibitem[Zou and Zhang, 2009]{zou2009}
Zou, H. and Zhang, H.~H. (2009).
\newblock On the adaptive elastic-net with a diverging number of parameters.
\newblock {\em The Annals of Statistics}, 37(4):1733--1751.

\end{thebibliography}

\clearpage\pagebreak\newpage
\pagestyle{empty}

\begin{table}[htpb]
	\caption{Simulation results for Example \ref{exam:fixdim}. The loss and risk ratios for the true model $\mathbf{w}_{M_0+1}^0$ and the least squares model averaging with $\phi_n=2$ and $\phi_n=\log n$.}\label{tab:simu.exm1}
	\centering
	\begin{tabular}{cccccccc}
			\hline
			& & \multicolumn{3}{c}{Loss ratio to $\inf_{\mathbf{w}\in \mathcal{H}_n} L_n(\mathbf{w})$} & \multicolumn{3}{c}{Risk ratio to $\inf_{\mathbf{w}\in \mathcal{H}_n} R_n(\mathbf{w})$} \\
			\cmidrule(r){3-5}\cmidrule(l){6-8}
			$R^2$ & $n$ & $\mathbf{w}_{M_0+1}^0$ & $\phi_n=2$ & $\phi_n=\log n$ & $\mathbf{w}_{M_0+1}^0$ & $\phi_n=2$ & $\phi_n=\log n$ \\
			\hline
			\multirow{4}{*}{0.05} & 100 & 3.020 & 2.928 & 3.342 & 1.558 & 1.353 & 1.613 \\   
			& 1000 & 2.329 & 2.647 & 2.924 & 1.062 & 1.113 & 1.708 \\   
			& 10000 & 1.843 & 2.330 & 1.755 & 1.006 & 1.080 & 1.085 \\  
			& 50000 & 2.901 & 3.587 & 2.502 & 1.001 & 1.075 & 1.024 \\
			\hline   
			\multirow{4}{*}{0.1} & 100 & 2.392 & 3.126 & 3.887 & 1.287 & 1.361 & 1.922 \\   
			& 1000 & 2.035 & 2.519 & 2.210 & 1.030 & 1.085 & 1.239 \\   
			& 10000 & 2.194 & 2.802 & 1.895 & 1.003 & 1.081 & 1.040 \\   
			& 50000 & 3.375 & 5.847 & 2.975 & 1.001 & 1.080 & 1.012 \\
			\hline   
			\multirow{4}{*}{0.5} & 100 & 2.272 & 2.803 & 2.159 & 1.034 & 1.096 & 1.111 \\   
			& 1000 & 1.986 & 2.657 & 1.864 & 1.003 & 1.081 & 1.022 \\   
			& 10000 & 1.999 & 2.949 & 1.930 & 1.000 & 1.080 & 1.005 \\   
			& 50000 & 1.858 & 2.662 & 1.815 & 1.000 & 1.079 & 1.002 \\
			\hline   
			\multirow{4}{*}{0.9} & 100 & 3.131 & 3.435 & 2.520 & 1.004 & 1.083 & 1.016 \\   
			& 1000 & 2.072 & 2.754 & 2.013 & 1.000 & 1.080 & 1.004 \\   
			& 10000 & 1.826 & 2.430 & 1.809 & 1.000 & 1.081 & 1.001 \\   
			& 50000 & 2.260 & 2.892 & 2.236 & 1.000 & 1.081 & 1.000 \\ 
			\hline
	\end{tabular}
\end{table}

\clearpage\pagebreak\newpage
\pagestyle{empty}

\begin{figure}[htpb]
	\centering
	\includegraphics[scale=0.54]{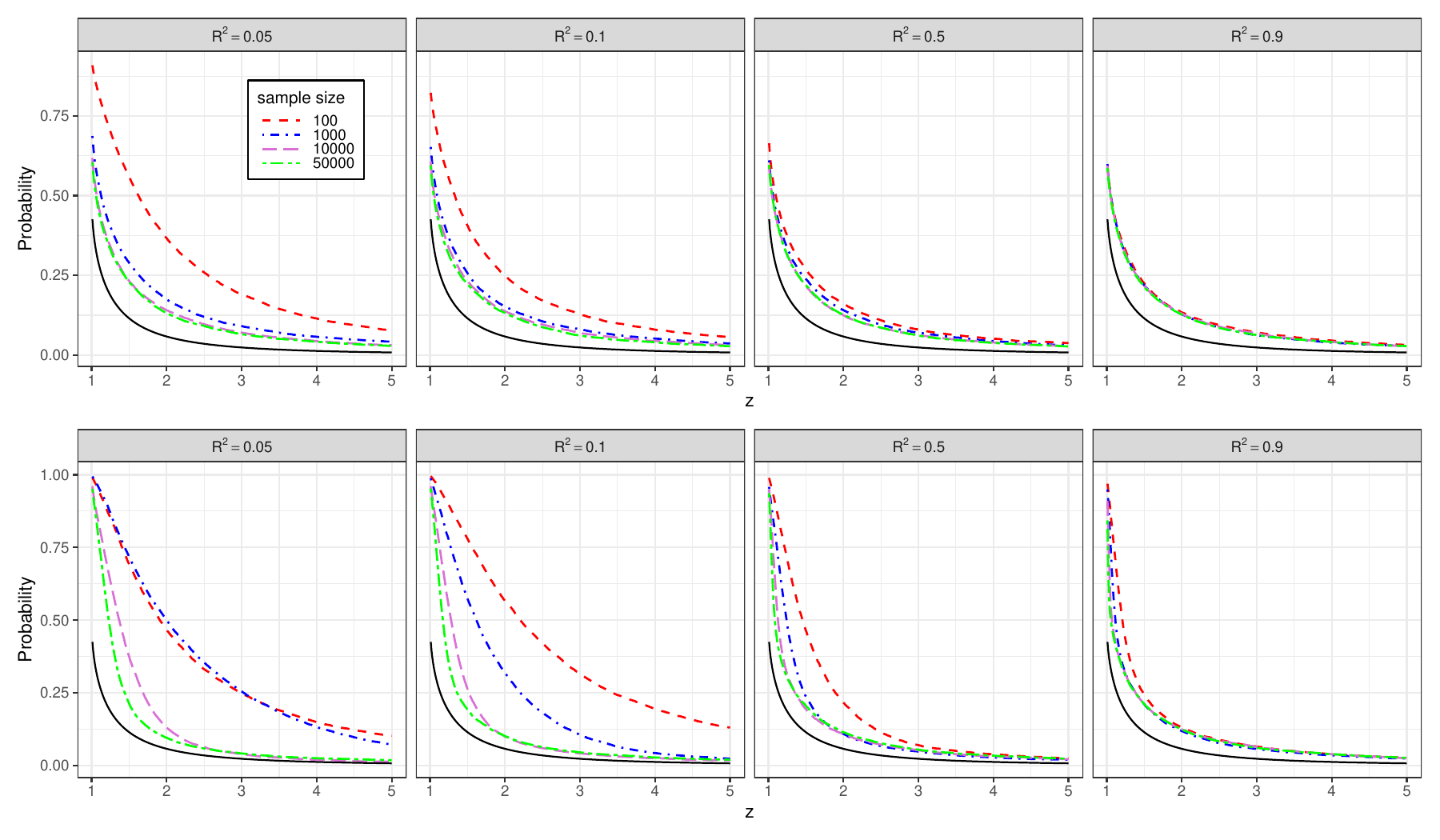}
	\caption{Simulation results for Example \ref{exam:fixdim}, where $\rho=0$. Upper panel: the simulated value of $\Pr\{L_n(\mathbf{w}_{M_0+1}^0)/L_n(\mathbf{w}^L)\geq z\}$ is plotted against $z\in (0, 5]$. Lower panel: the simulated value of $\Pr\{L_n(\hat{\mathbf{w}})/L_n(\mathbf{w}^L)\geq z\}$ is plotted against $z$. The solid line is the function $2^{-1}\Pr\{\mathrm{Beta}(1/2, 2)\geq z\}$.}\label{fig:prob.ineq}
\end{figure}

\clearpage\pagebreak\newpage
\pagestyle{empty}

\begin{table}[htpb]
	\caption{Simulation results for Example \ref{exam:divdim1}. The loss and risk ratios for the true model $\mathbf{w}_{M_0+1}^0$ and the least squares model averaging with $\phi_n=2$ and $\phi_n=\log n$.}\label{tab:simu.exm2}
	\centering
	\begin{tabular}{cccccccc}
		\hline
		& & \multicolumn{3}{c}{Loss ratio to $\inf_{\mathbf{w}\in \mathcal{H}_n} L_n(\mathbf{w})$} & \multicolumn{3}{c}{Risk ratio to $\inf_{\mathbf{w}\in \mathcal{H}_n} R_n(\mathbf{w})$} \\
		\cmidrule(r){3-5}\cmidrule(l){6-8}
		$R^2$ & $n$ & $\mathbf{w}_{M_0+1}^0$ & $\phi_n=2$ & $\phi_n=\log n$ & $\mathbf{w}_{M_0+1}^0$ & $\phi_n=2$ & $\phi_n=\log n$ \\
		\hline
		\multirow{5}{*}{0.05} & 100 & 1.442 & 1.327 & 1.165 & 1.294 & 1.094 & 1.060 \\   
		& 1000 & 1.069 & 1.142 & 1.330 & 1.031 & 1.049 & 1.286 \\   
		& 10000 & 1.020 & 1.045 & 1.052 & 1.002 & 1.008 & 1.024 \\   
		& 50000 & 1.010 & 1.024 & 1.016 & 1.000 & 1.005 & 1.004 \\   
		& 100000 & 1.008 & 1.019 & 1.011 & 1.000 & 1.004 & 1.002 \\ 
		\hline  
		\multirow{5}{*}{0.1} & 100 & 1.336 & 1.356 & 1.288 & 1.203 & 1.120 & 1.150 \\   
		& 1000 & 1.061 & 1.127 & 1.244 & 1.016 & 1.025 & 1.176 \\   
		& 10000 & 1.019 & 1.044 & 1.034 & 1.001 & 1.008 & 1.011 \\   
		& 50000 & 1.011 & 1.024 & 1.013 & 1.000 & 1.005 & 1.002 \\   
		& 100000 & 1.008 & 1.019 & 1.009 & 1.000 & 1.004 & 1.001 \\
		\hline   
		\multirow{5}{*}{0.5} & 100 & 1.180 & 1.371 & 1.381 & 1.035 & 1.063 & 1.158 \\   
		& 1000 & 1.049 & 1.111 & 1.071 & 1.002 & 1.019 & 1.012 \\   
		& 10000 & 1.018 & 1.043 & 1.021 & 1.000 & 1.008 & 1.001 \\   
		& 50000 & 1.011 & 1.024 & 1.011 & 1.000 & 1.005 & 1.000 \\   
		& 100000 & 1.008 & 1.019 & 1.008 & 1.000 & 1.004 & 1.000 \\
		\hline   
		\multirow{5}{*}{0.9} & 100 & 1.151 & 1.325 & 1.192 & 1.004 & 1.047 & 1.013 \\   
		& 1000 & 1.049 & 1.109 & 1.053 & 1.000 & 1.019 & 1.002 \\   
		& 10000 & 1.019 & 1.043 & 1.019 & 1.000 & 1.008 & 1.000 \\   
		& 50000 & 1.010 & 1.024 & 1.010 & 1.000 & 1.005 & 1.000 \\   
		& 100000 & 1.008 & 1.019 & 1.008 & 1.000 & 1.004 & 1.000 \\ 
		\hline
	\end{tabular}
\end{table}

\clearpage\pagebreak\newpage
\pagestyle{empty}

\begin{table}[htpb]
	\caption{Simulation results for Example \ref{exam:divdim2}. The loss and risk ratios for the true model $\mathbf{w}_{M_0+1}^0$ and the least squares model averaging with $\phi_n=2$ and $\phi_n=\log n$.}\label{tab:simu.exm3}
	\centering
	\begin{tabular}{cccccccc}
		\hline
		& & \multicolumn{3}{c}{Loss ratio to $\inf_{\mathbf{w}\in \mathcal{H}_n} L_n(\mathbf{w})$} & \multicolumn{3}{c}{Risk ratio to $\inf_{\mathbf{w}\in \mathcal{H}_n} R_n(\mathbf{w})$} \\
		\cmidrule(r){3-5}\cmidrule(l){6-8}
		$R^2$ & $n$ & $\mathbf{w}_{M_0+1}^0$ & $\phi_n=2$ & $\phi_n=\log n$ & $\mathbf{w}_{M_0+1}^0$ & $\phi_n=2$ & $\phi_n=\log n$ \\
		\hline
		\multirow{5}{*}{0.05} & 100 & 5.833 & 13.518 & 7.295 & 1.462 & 1.371 & 1.633 \\   
		& 1000 & 1.518 & 1.648 & 2.771 & 1.097 & 1.058 & 1.920 \\   
		& 10000 & 1.207 & 1.306 & 1.524 & 1.017 & 1.027 & 1.237 \\   
		& 100000 & 1.137 & 1.221 & 1.192 & 1.002 & 1.021 & 1.053 \\   
		& 500000 & 1.106 & 1.177 & 1.120 & 1.001 & 1.018 & 1.018 \\
		\hline
		\multirow{5}{*}{0.1} & 100 & 9.398 & 20.300 & 18.442 & 1.238 & 1.350 & 1.927 \\   
		& 1000 & 1.489 & 1.635 & 1.959 & 1.046 & 1.044 & 1.348 \\   
		& 10000 & 1.201 & 1.304 & 1.319 & 1.008 & 1.027 & 1.109 \\   
		& 100000 & 1.133 & 1.218 & 1.151 & 1.001 & 1.022 & 1.025 \\   
		& 500000 & 1.102 & 1.173 & 1.107 & 1.000 & 1.018 & 1.008 \\
		\hline
		\multirow{5}{*}{0.5} & 100 & 5.836 & 6.115 & 3.816 & 1.028 & 1.066 & 1.081 \\
		& 1000 & 1.365 & 1.568 & 1.371 & 1.005 & 1.038 & 1.034 \\   
		& 10000 & 1.180 & 1.294 & 1.181 & 1.001 & 1.026 & 1.012 \\   
		& 100000 & 1.127 & 1.217 & 1.126 & 1.000 & 1.022 & 1.003 \\   
		& 500000 & 1.100 & 1.173 & 1.100 & 1.000 & 1.019 & 1.001 \\
		\hline   
		\multirow{5}{*}{0.9} & 100 & 8.618 & 7.585 & 5.133 & 1.003 & 1.061 & 1.015 \\   
		& 1000 & 1.460 & 1.680 & 1.432 & 1.001 & 1.039 & 1.005 \\   
		& 10000 & 1.168 & 1.288 & 1.167 & 1.000 & 1.026 & 1.002 \\   
		& 100000 & 1.126 & 1.217 & 1.126 & 1.000 & 1.021 & 1.000 \\   
		& 500000 & 1.101 & 1.173 & 1.100 & 1.000 & 1.018 & 1.000 \\ 
		\hline
	\end{tabular}
\end{table}

\end{document}